\declaretheorem[
name = Theorem,
%numberwithin = section,
%shaded={bgcolor=mycolor},
%thmbox=S
]{theorem}
\declaretheorem[
name = Corollary,
sibling = theorem
]{corollary}
\declaretheorem[
name = Lemma,
sibling = theorem
]{lemma}
\declaretheorem[
name = Proposition,
sibling = theorem
]{proposition}
\declaretheorem[
name = Claim,
numbered = no
]{claim*}
\declaretheoremstyle[%
  spaceabove=-6pt,%
  spacebelow=6pt,%
  headfont=\normalfont\itshape,%
  postheadspace=1em,%
  qed=$\blacksquare$,%
  headpunct={.}
]{mystyle}
\declaretheorem[
name = Remark,
sibling = theorem,
style=definition
]{remark}
\declaretheorem[
name = Definition,
sibling = theorem,
style=definition
]{definition}
\declaretheorem[
name = Question,
sibling = theorem,
style=definition
]{question}
\declaretheorem[
name = Convention,
sibling = theorem,
style = definition
]{convention}
\renewcommand{\DeclareMathOperator}[1]{\newcommand{#1}}
\DeclareMathOperator{\dom}{\mathrm{dom}}
\newcommand{\Pow}{\wp}
\DeclareMathOperator{\On}{\mathrm{On}}
\DeclareMathOperator{\Ord}{\mathrm{On}}
\DeclareMathOperator{\Le}{\mathbf{L}}
\DeclareMathOperator{\Ve}{\mathbf{V}}
\DeclareMathOperator{\Lo}{\mathcal{L}}
\newcommand{\seq}[1]{( #1 )}
\newcommand{\Goedel}{\mathfrak{g}}
\definecolor{lightgray}{gray}{0.75}
\newcommand{\vir}[1]{``#1''}
\newcommand{\set}[1]{\{ #1 \}}
\newcommand{\Set}[2]{\{ #1 \, | \, #2 \}}
\newcommand{\Seq}[2]{\langle #1 \, | \, #2 \rangle}
\newcommand{\essdom}{\mathrm{essdom}}
\newcommand{\IKP}{\mathsf{IKP}}
\newcommand{\IPC}{\mathbf{IPC}}
\newcommand{\efrac}[2]{%
  \mathchoice
    {\ooalign{%
      $\genfrac{}{}{1.2pt}0{\hphantom{#1}}{\hphantom{#2}}$\cr%
      $\color{white}\genfrac{}{}{.4pt}0{\color{black}#1}{\color{black}#2}$}}%
    {\ooalign{%
      $\genfrac{}{}{1.2pt}1{\hphantom{#1}}{\hphantom{#2}}$\cr%
      $\color{white}\genfrac{}{}{.4pt}1{\color{black}#1}{\color{black}#2}$}}%
    {\ooalign{%
      $\genfrac{}{}{1.2pt}2{\hphantom{#1}}{\hphantom{#2}}$\cr%
      $\color{white}\genfrac{}{}{.4pt}2{\color{black}#1}{\color{black}#2}$}}%
    {\ooalign{%
      $\genfrac{}{}{1.2pt}3{\hphantom{#1}}{\hphantom{#2}}$\cr%
      $\color{white}\genfrac{}{}{.4pt}3{\color{black}#1}{\color{black}#2}$}}%
}
\begin{document}
\renewcommand{\phi}{\varphi} % for some reason this doesn't work if it's in the preamble
    
\title[Realisability for Infinitary Intuitionistic Set Theory]{Realisability for Infinitary \\ Intuitionistic Set Theory}

\author[Merlin Carl]{Merlin Carl$^1$}
\address{$^1$Institut für mathematische, naturwissenschaftliche und technische Bildung, Abteilung für Mathematik und ihre Didaktik, Europa-Universität Flensburg, Auf dem Campus 1, 24943 Flensburg, Germany}
\email{merlin.carl@uni-flensburg.de}

\author[Lorenzo Galeotti]{Lorenzo Galeotti$^2$}
\address{$^2$Amsterdam University College, P.O. Box 94160, 1090 GD Amsterdam, The Netherlands}
\email{l.galeotti@uva.nl}

\author[Robert Passmann]{Robert Passmann$^3$}
\address{$^3$Institute for Logic, Language and Computation, Faculty of Science, University of Amsterdam, P.O. Box 94242, 1090 GE Amsterdam, The Netherlands}
\email{r.passmann@uva.nl}

\thanks{The research of the third author was supported by a doctoral scholarship of the \emph{Studienstiftung des deutschen Volkes} (German Academic Scholarship Foundation).}

\date{\today}

\begin{abstract}
    We introduce a realisability semantics for infinitary intuitionistic set theory that is based on Ordinal Turing Machines (OTMs). We show that our notion of OTM-realisability is sound with respect to certain systems of infinitary intuitionistic logic, and that all axioms of infinitary Kripke-Platek set theory are realised. Finally, we use a variant of our notion of realisability to show that the propositional admissible rules of (finitary) intuitionistic Kripke-Platek set theory are exactly the admissible rules of intuitionistic propositional logic.
\end{abstract}

\maketitle

\section{Introduction}

Realisability formalises how a statement can be \textit{effectively} or \textit{explicitly} established. For example, in order to \textit{realise} a statement of the form $\forall{x}\exists{y}\phi$ according to Kleene's realisability semantics for arithmetic \cite{Kleene1945}, one needs to come up with a uniform method of obtaining a suitable $y$ from any given $x$. Such a \textit{method} is often taken to be a Turing program. Realisability originated as a formalisation of intuitionistic semantics. Indeed, it turned out to be well-chosen in this respect: the Curry-Howard-isomorphism shows that proofs in intuitionistic arithmetic correspond---in an effective way---to programs realising the statements they prove, see, e.g, van Oosten's survey \cite{vanOosten2002} for a general introduction to realisability.

From a certain perspective, however, Turing programs are quite limited as a formalisation of the concept of \textit{effective method}. Hodges \cite{Hodges} argued that mathematicians rely (at least implicitly) on a concept of effectivity that goes far beyond Turing computability, allowing effective procedures to apply to transfinite and uncountable objects. Koepke's \cite{Koepke} Ordinal Turing Machines (OTMs) provide a natural approach for modelling transfinite effectivity.

With such a transfinite generalisation of Turing computability, it becomes a natural application to obtain notions of realisability based on models 
of transfinite computability. Such a concept was first defined and briefly studied by the first author \cite{Carl2019}. In contrast to Turing machines, for which input and output are finite strings and can thus be encoded as natural numbers, OTMs can operate on (codes for) arbitrary sets. The natural domain for the concept of transfinite realisability obtained from OTMs is thus set theory rather than (transfinite) arithmetic. 
Does OTM-realisability correspond to provability in various systems of intuitionistic set theory? This question was first tackled by the first author in the context of finitary logic in a recent note \cite{Carl2019note}, which the present paper replaces and considerably expands. 

Given the presence of well-established concepts of infinitary logics, proofs, and their intuitionistic variants, it is more natural to consider how these, rather than classical provability in finitary logic, relate to OTM-realisability. In particular, we consider the question whether there is a transfinite analogue of the Curry-Howard-isomorphism; and to this end, we show that proofs in the infinitary intuitionistic proof calculus proposed by Espíndola \cite{Espindola2018} correspond to OTM-realisations. Thus, OTM-realisability is a natural concept in the context of infinitary intuitionistic proof theory. 

Previous notions of realisability for set theory were developed by Myhill, Friedman, Beeson, McCarty, Rathjen and others for a variety of intuitionistic and constructive set theories \cite{beeson1979continuity,beeson2012foundations,friedman1973some,mccarty1986realizability,myhill1973some,Rathjen2008,Rathjen2006,Rathjen2005} making explicit or implicit use of partial combinatory algebras (pca's). This notion differs from our realisability in the following two senses. First, the treatment of the existential quantifier is different: while pca-realisability usually requires witnesses for existential quantifiers to be computed uniformly, OTM-realisability allows to take parameters, e.g. witnesses selected by universal quantifiers, into account. Second, OTM-realisability allows to treat infinitary languages of arbitrary size while pca-realisability is---in its full generality---restricted to the countable infinite (as, for example, the natural numbers form a pca if one fixes an appropriate coding and application of partially recursive functions). Of course, it may be possible to circumvent these restrictions of pca-realisability by requiring additional structure on the pca's involved.

This paper is organised as follows. After introducing some necessary preliminaries and codings in \Cref{Section: Preliminaries}, we define a notion of OTM-realisability for set theory in \Cref{Section: A Notion of Transfinite Realisability}. \Cref{Section: Soundness} provides our main results on soundness on both the level of infinitary intuitionistic first-order logic as well as the level of set theory. We put our machinery to use in \Cref{Section: Proof-Theoretic Application} and prove a result about the admissible rules of finitary intuitionistic Kripke-Platek set theory, answering a question by Iemhoff and the third author (see \cite{IemhoffPassmann2020}).

\section{Preliminaries}
\label{Section: Preliminaries}

Throughout this paper, our meta-theory is $\mathsf{ZFC}$, i.e., Zermelo Fraenkel set theory with the axiom of choice. The main reason for doing so is that our arguments heavily rely on the theory of Ordinal Turing Machines (OTMs) which, so far, has only been developed within classical set theories.\footnote{One of our referees pointed out that it would be desirable to indicate which of our arguments work constructively and which depend on the use of classical logic and set theory. This, however, would require a constructive reworking the theory of transfinite computability, which seems to be a considerable research project in itself, albeit a potentially quite worthwhile one.}

\subsection{Infinitary Intuitionistic Logic}
\label{Section: Infinitary Intuitionistic Logic}

We will denote the class of ordinals by $\On$, the class of binary sequences of ordinal length by $2^{<\On}$, and the class of sets of ordinal numbers by $\wp(\On)$. We fix a class of variables $x_i$ for each $i \in \Ord$. Given an ordinal $\alpha$, a \emph{context} of length $\alpha$ is a sequence $\mathbf{x} = \Seq{x_{i_j}}{j < \alpha}$ of variables. In this paper, we will use boldface letters, $\mathbf{x}, \mathbf{y}, \mathbf{z}, \dots$, to denote contexts and light-face letters, $x_i, y_i, z_i, \dots$, to denote the $i$-th variable symbol of $\mathbf{x}$, $\mathbf{y}$, and $\mathbf{z}$, respectively. We will denote the \emph{length} of a context $\mathbf{x}$ by $\ell(\mathbf{x})$.  The formulas of the infinitary language $\Lo^\in_{\infty, \infty}$ of set theory are defined as the smallest class of formulas closed under the following rules:
\begin{enumerate}
    \item $\bot$ is a formula;
    \item $x_i \in x_j$ is a formula for any variables $x_i$ and $x_j$;
    \item $x_i = x_j$ is a formula for any variables $x_i$ and $x_j$;
    \item if $\phi$ and $\psi$ are formulas, then $\phi \rightarrow \psi$ is a formula;
    \item if $\phi_\alpha$ is a formula for every $\alpha < \beta$, then $\bigvee_{\alpha < \beta} \phi_\alpha$ is a formula;
    \item if $\phi_\alpha$ is a formula for every $\alpha < \beta$, then $\bigwedge_{\alpha < \beta} \phi_\alpha$ is a formula;
    \item if $\mathbf{x}$ is a context of length $\alpha$, then $\exists^\alpha \mathbf{x} \phi$ is a formula; and,
    \item if $\mathbf{x}$ is a context of length $\alpha$, then $\forall^\alpha \mathbf{x} \phi$ is a formula.
\end{enumerate}

By this definition, our language allows set-sized disjunctions and conjunctions as well as quantification over set-many variables at once. However, infinite alternating sequences of existential and universal quantifiers are excluded from this definition. 

\begin{remark}
While we only consider the logic $\Lo_{\infty,\infty}$ in this paper, our results can be easily adapted to logics $\Lo_{\kappa,\kappa}$ for regular cardinals $\kappa$. In particular, notions of realisability for these logics can be obtained through $\kappa$-Turing machines ($\kappa$-TMs; see \cite{Carl2019}).
\end{remark}

Whenever it is clear from the context, we will omit the superscripts from the quantifiers and write $\exists$ and $\forall$ instead of $\exists^{\alpha}$ and $\forall^\alpha$, respectively. It will often be useful to identify a variable $x$ with the context $\mathbf{x} = \langle x \rangle$ whose unique element is $x$. In such situations, we will write ``$\exists x \phi$'' for ``$\exists \mathbf{x} \phi$'' and ``$\forall x \phi$'' for ``$\forall \mathbf{x} \phi$''. A variable $x_i$ is called a \emph{free variable of a formula $\phi$} whenever $x_i$ appears in $\phi$ but is not in the scope of a quantification over a context containing $x_i$. As usual, a formula without free variables is called a \emph{sentence}. We say that $\mathbf{x}$ is \emph{a context of the formula $\phi$} if all free variables of $\phi$ appear in $\mathbf{x}$. As usual, we will write $\phi(\mathbf{x})$ if $\phi$ is a formula and $\mathbf{x}$ a context of $\varphi$. Similarly, given two contexts $\mathbf{x}$ and $\mathbf{y}$ with $x_{j} \neq y_{j'}$ for all $j < \ell(\mathbf{x})$ and $j' < \ell(\mathbf{y})$, we will write $\varphi(\mathbf{x},\mathbf{y})$ if the sequence obtained by concatenating $\mathbf{x}$ and $\mathbf{y}$ is a context for $\varphi$.

Let $\mu$ be an ordinal, $\mathbf{x}$ be a context of length $\mu$, and $y$ be a variable. Then ``$\mathbf{x} \in y$'' is an abbreviation of the following infinitary formula:
\begin{multline*}
    \exists f \exists d \exists^{\mu}\mathbf{z} [\text{\vir{$f$ is a function whose domain is the ordinal $d$}} \ \land \\ 
    \left(\bigwedge_{j'<j<\mu}(z_{j'}\in z_{j} \land z_{j} \in d \land f(z_{j})=x_{j}) \ \land \ 
    \forall x (x\in d\rightarrow \bigvee_{j<\mu}z_{j}=x)\right) \land f \in y ].
\end{multline*}
Intuitively, ``$\mathbf{x} \in y$'' expresses that there is a function $f$ such that $f = \mathbf{x}$ and $f$ is contained in the set $y$. We will later see in Lemma \ref{Lemma:sequenceBound} that ``$\mathbf{x} \in y$'' is interpreted with the intended set-theoretical meaning, namely that the set $y$ contains the sequence $\mathbf{x}$. 

As in the case of finitary realisability, bounded quantification will play a crucial role for transfinite realisability. For this reason, we extend the classical abbreviations as follows: given a formula $\phi$ and an ordinal $\alpha \geq \omega$ we introduce the \emph{bounded quantifiers} as abbreviations, namely,  
    $$
        \forall^{\alpha} \mathbf{x}\in y \ \phi \text{ for } \forall^\alpha \mathbf{x} (\mathbf{x}\in y \rightarrow \phi),
    $$
and 
    $$
        \exists^{\alpha} \mathbf{x}\in y \ \phi \text{ for } \exists^{\alpha} \mathbf{x} ( \mathbf{x}\in y \land \phi).
    $$
The bounded quantifiers for $\alpha < \omega$ are defined as usual. The class of $\Delta^{\omega}_0$-formulas consists of those formulas that have no infinitary quantifiers and whose quantifiers are bounded\footnote{We note that $\Delta^{\omega}_0=\Delta_0$ and $\Sigma^{\omega}_1=\Sigma_1$ where $\Delta_0$ and $\Sigma_1$ are the usual classes of formulas in the Levy hierarchy.}. Similarly, a formula belongs to the class of $\Sigma^{\omega}_1$-formulas if it is of the form $\exists x \psi$ for some $\Delta^{\omega}_0$-formula $\psi$. We extend this definition to formulas with infinitary quantifiers as follows. An infinitary formula is a $\Delta^{\infty}_0$-formula if all the quantifiers appearing in the formula are bounded. Furthermore, the class of $\Sigma^{\infty}_1$-formulas consists of the formulas of the form $\exists^\alpha \mathbf{x}\psi$ for some $\Delta^{\infty}_0$-formula $\psi$ and an ordinal $\alpha$. 

\begin{remark}
    Note that the previous definition of infinitary $\Delta^{\infty}_0$-formulas requires the bounding set to contain the sequence $\langle x_{j} \mid j < \alpha \rangle$ rather than just each individual element of that sequence. This is necessary to lift the usual absoluteness results to the infinitary case. Indeed, the following alternative definition and straightforward generalisation of the standard definition of bounded formulas does not provide absoluteness. If we define
    $$
        \underline{\forall}^{\alpha} \mathbf{x}\in y \phi \text{ as } \forall^\alpha \mathbf{x} (\bigwedge_{j< \alpha } x_{j}\in y \rightarrow \phi),
    $$
    and 
    $$
        \underline{\exists}^{\alpha} \mathbf{x}\in y \phi \text{ as } \exists^{\alpha} \mathbf{x} (\bigwedge_{j<\alpha} x_{j}\in y \land \phi),
    $$ 
    then it %is straightforward to see that 
    the following formula
    $$
        \phi(y) := \underline{\exists}^{\omega} \mathbf{x} \in \{0,1\} \ \mathbf{x} \notin y
    $$
    is not absolute: it is easy to see that if $\Pow^{\Ve}(\omega)\neq\Pow^{\Le}(\omega)$, then  $\Le \models \lnot\phi(\Le_{\omega_1^\Le})$ but $\phi(\Le_{\omega_1^\Le})$ is true.

%$\Ve \neq \Le$ then $\Le \models \lnot \phi(\Le_{\omega_1^\Le})$ but $\phi(\Le_{\omega_1^\Le})$ is true.
\end{remark}

\subsection{Ordinal Turing Machines}

Ordinal Turing machines (OTMs, for short) were introduced by Koepke \cite{Koepke} as a transfinite generalisation of Turing machines and will be the main ingredient for our definition of infinitary realisability. We will only give a basic intuition for this model of transfinite computability and refer to the relevant literature (e.g. \cite{Koepke} or in \cite[Section 2.5.6]{Carl2019}) for a full introduction to OTMs. 

An \textit{Ordinal Turing machine (OTM)} has the following tapes of unrestricted transfinite length: finitely many tapes for the input, finitely many scratch tapes, and one tape for the output. Ordinal Turing machines run classical Turing machine programs and behave exactly like standard Turing machines at successor stages of a computation. At limit stages, the content of the tapes is computed by taking the point-wise inferior limit, the position of the head is set to the inferior limit of the head positions at previous stages, and the state of the machine is computed using the inferior limit of the states at previous stages (see \cite[Section 2.5.6]{Carl2019} for more details). 

Ordinal Turing machines are a very well-behaved model of transfinite computability and many results from classical computability theory can be generalised to OTMs (e.g., \cite{Koepke,Dawson,Koepke09,Rin, Carl2017,Carl2018,Seyfferth}). For this reason, we will describe OTM-programs using high-level pseudo algorithms as usually done with Turing machines. We fix a computable coding of Turing machine programs as natural numbers. In the rest of the paper, we will identify a program with its natural number code. 

In this paper, we consider machines that run programs with parameters. For this purpose, we fix one of the input tapes as the parameter tape. A parameter is a binary sequence of ordinal length, i.e., an element of $2^{< \Ord}$, which is written on the parameter tape before the execution of the program begins. Using classical techniques (see below), we can code sets of parameters in a single sequence $p \in 2^{<\Ord}$. Therefore, we can say that an OTM executes a program $c$ \emph{with parameter $P \subset 2^{< \Ord}$} if a code for $P$ is written on the parameter tape before the program is executed.\footnote{We assume here a fixed order in which the parameter appears in the coding.} Given a program $c$ and a parameter $P \subset 2^{< \Ord}$, and two sequences $a$ and $b$ in $2^{<\Ord}$ we will write $c_P(a)=b$ if an OTM which executes $c$ with parameter $P$ and input $a$, outputs $b$. 

Given a class function $f: 2^{<\Ord} \rightarrow 2^{<\Ord}$ we say that \emph{a program $c$ computes $f$ with parameter $P \subset 2^{<\Ord}$} if for every binary sequence $b$ we have $c_P(b)=f(b)$. 

\subsection{Coding}

The objects of our notion of realisability will be arbitrary \textit{sets} and not just ordinal numbers. Therefore, we need to be able to perform computations on sets. As OTMs work on ordinal length binary sequences, we have to code arbitrary sets as sequences in $2^{<\Ord}$. However, working directly on binary sequences of ordinal length is cumbersome. We therefore use a concatenation of two codings given by the following injective functions:
\begin{enumerate}
    \item A \textit{low-level coding} 
    $$2^{<\Ord} \to \Ord \cup \wp(\Ord) \cup (\Ord \times \wp(\Ord)) \cup (\Ord \times \wp(\Ord))^{<\Ord}$$
    allowing us to encode ordinals, sets of ordinals, pairs of sets of ordinals and ordinals, and sequences of such pairs as binary sequences of ordinal length.
    \item A \textit{high-level coding} $$\mathtt{C}:  (\On \times \wp(\Ord)) \to \Ve$$ encoding arbitrary sets as a pair of an ordinal and a set of ordinals. 
\end{enumerate}
By concatenating both injective functions, we obtain a coding of arbitrary sets as binary sequences of ordinal length. The reader familiar with OTMs and codings may wish to skip forward to Section \ref{Section: A Notion of Transfinite Realisability} and refer back to this section if necessary.

\subsubsection{Low-level coding}
While not difficult, the technical details of the low-level coding are complicated and cumbersome (just like in the case of codings for ordinary Turing machines). We will now introduce a coding of sets of ordinals as binary sequences of ordinal length and prove a few basic properties of this coding.

\begin{remark}
    Note that sets of ordinals can in principle also be coded using characteristic functions. Unfortunately, since the largest ordinal in the set is not computable from this coding, bounded searches are not computable with this simple coding and we would not be able to compute basic operations over sets, such as computing the image of a set under a computable function.
\end{remark}

Given a binary sequence $b\in 2^{<\On}$ and a set $ X\in \wp(\On)$ of ordinals we say that $b$ \emph{codes} $X$ if for $\beta = \sup\{2\alpha+2 \mid \alpha \in X\}$ we have $b(\beta+1) = b(\beta+2) = 1$, for every $2\alpha \leq \beta$ we have that $b(2\alpha)=0$, and for $2\alpha+1 < \beta$ we have $\alpha\in X$ if and only if $b(2\alpha+1)=1$. Intuitively, a binary sequence encodes a set of ordinals $X$ if it is of the form 
    $$
        0 i_0 0 i_1 0 \ldots 0 i_\alpha 0 \ldots 1 1 \ldots
    $$ 
where $i_\alpha = 1$ if and only if $\alpha \in X$. Note that the final sequence $11$ marks the end of the code of the set, i.e, none of the further bits matter in determining the set encoded by the sequence.\footnote{For example, $0$ is encoded by any sequence starting with $011$, $1 = \{ 0 \}$ is encoded by any sequence starting with $01011$, and $\omega$ is encoded by any sequence starting with the sequence $\underbrace{0101010101\ldots}_{\text{length } \omega}011$.} 

In Section \ref{Section:Codings of Sets}, we code sets as pairs $\langle \alpha, X \rangle$ consisting of an ordinal $\alpha$ and a set $X$ of ordinals including $\alpha$. For this reason, we will now extend the previous coding to pairs in $\On \times \wp(\On)$. The idea is to start with a code of $X$ as described before and encode $\alpha$ in the final part of the binary sequence. A sequence $b \in 2^{<\On}$ \emph{encodes} $\langle \alpha, X\rangle$ if it encodes $X$ and for all $\beta+2<\gamma<\beta+3+\alpha<\eta$ we have $b(\gamma)=b(\eta)=0$, and $b(\beta+3+\alpha)=1$. Intuitively, a pair $\langle \alpha, X \rangle\in \On\times \wp(\On)$ is encoded by a sequence of the form $$\underbrace{0i_00i_10\ldots0i_\gamma 0\ldots 11}_{\text{Code of $X$ of length $\beta+2$}}\underbrace{0000\ldots }_{\alpha}1000\ldots$$ where as before $i_\gamma=1$ if and only if $\gamma\in X$.

We take $\chi$ to be the class function that associates to every sequence encoding a pair $\langle \alpha,X\rangle$ the corresponding pair. It is easy to see that $\chi$ is bijective. 

We will say that a program $c$ computes a class function $F:\On\times \wp(\On)\rightarrow \On\times\wp(\On)$ with parameters $P$ if an OTM executing $c$ with parameter $P$, and a sequence encoding a pair $\langle \alpha,X \rangle \in \On\times \wp(\On)$ as input, returns a sequence that encodes $F(\langle \alpha,X\rangle)$. Moreover, we will say that $c$ computes the class function $F: \wp(\On) \rightarrow \wp(\On)$ with parameter $P$ if for every set $X \in \wp(\On)$, an OTM executing $c$ with parameter $P$, and a sequence that encodes $\langle 0,X\rangle$ as input, returns a sequence that encodes $\langle 0, F(X)\rangle$.

The previous coding can be easily extended to sequences in $(\On\times \wp(\On))^{<\On}$. We encode a sequence $\langle \langle \alpha_\beta,X_\beta \rangle \mid \beta <\eta\rangle$ using the sequence obtained by concatenating the encodings $\chi^{-1}(\langle \alpha_\beta,X_\beta \rangle)$ in the order they appear in $\langle \langle \alpha_\beta,X_\beta \rangle \mid \beta <\eta\rangle$ followed by a sequence of four $1$s to mark the end of the code of the sequence. Therefore, a sequence $\langle \langle \alpha_\beta,X_\beta \rangle \mid \beta <\eta\rangle$ is coded as follows:
$$
    \underbrace{0i^{0}_00i^{0}_10\ldots0i^0_{\gamma_0}0\ldots 11}_{\text{Code of $X_{0}$}}
    \underbrace{0000\ldots }_{\alpha_{0}}1
    \underbrace{0i^{1}_00i^{1}_10\ldots0i^1_{\gamma_1}0\ldots 11}_{\text{Code of $X_{1}$}}
    \underbrace{0000\ldots }_{\alpha_{1}}1 \ldots 1111
$$
where, as before, $i^\beta_\gamma=1$ if and only if $\gamma\in X_\beta$ for all $\beta<\eta$. As mentioned above, this coding induces a notion of computability over $(\On\times \wp(\On))^{<\On}$.

\begin{lemma}\label{Lemma:BasicORDCodingAlgo}
    Let $X$ be a set of ordinals. Then the following are OTM-computable:
    \begin{enumerate}
        \item \label{BasicORDCodingAlgo:1} the function that, given codes for an ordinal $\alpha$ and a sequence $X$, returns $1$ if $\alpha \in X$, and $0$ otherwise,
        \item \label{BasicORDCodingAlgo:2} the function that, given codes for an OTM-computable function $f:\On\rightarrow \On$  and $X$, returns a code for the image $\{f(\alpha)\mid \alpha\in X\}$ of $X$ under $f$,
        \item \label{BasicORDCodingAlgo:3} the function that, givens codes for $\langle \langle \alpha_\gamma,X_\gamma \rangle \mid \gamma <\eta\rangle$ and an ordinal $\beta<\eta$, returns a code for $\langle \alpha_\beta,X_\beta \rangle$,
        \item \label{BasicORDCodingAlgo:4} the function that, given codes for $\langle \langle \alpha_\gamma,X_\gamma \rangle \mid \gamma <\eta\rangle$ and an ordinal $\beta<\eta$, returns a code for the list obtained by removing the $\beta$-th element from $\langle \langle \alpha_\gamma,X_\gamma \rangle \mid \gamma <\eta\rangle$,
        \item \label{BasicORDCodingAlgo:5} the function that, given codes for $\langle \langle \alpha_\gamma,X_\gamma \rangle \mid \gamma <\eta\rangle$ and  $\langle \alpha,X\rangle$, returns a code of the list $\langle \langle \alpha_\gamma,X_\gamma \rangle \mid \gamma<\eta+1\rangle$ where $\langle \alpha_\eta,X_\eta \rangle=\langle \alpha,X\rangle$,
        \item \label{BasicORDCodingAlgo:6} bounded searches through sets of ordinals.
    \end{enumerate}
\end{lemma}
\begin{proof}
    Given a code of a set of ordinals $X$ as a binary sequence, and given some ordinal $\alpha$ such that $\alpha<\sup\{2\beta+2\mid \beta \in X\}$, an OTM can stop at the position of the tape which contains the $i_\alpha$ bit of the code of $X$. In what follows, we will refer to this the cell of the tape as the \emph{position} of $i_\alpha$ on the tape. This can be computed by the program that moves the head of the tape left increasing a counter $\gamma$ each time that the head is moved to a cell with an even index. Once  $\gamma$ reaches $\alpha$, the machine moves to the next position of the tape and stops. 
    
    For \ref{BasicORDCodingAlgo:1}, consider the program that goes through the code of $X$ until it reaches the cell containing $i_\alpha$, copies the content of that cell to the output tape and then stops.
    
    For \ref{BasicORDCodingAlgo:2}, consider the program that runs through the representation of $X$ and for each $\alpha \in X$, first computes $f(\alpha)$, then writes a $1$ in the position of $i_{f(\alpha)}$ on the output tape, and saves the index of the first cell of the output tape after which the output tape was not modified in an auxiliary counter $\gamma$. Once the program sees the sequence $011$ in the representation of $X$, it moves the head of the output tape to position $\gamma$, writes $011$ and stops. 
    
    For \ref{BasicORDCodingAlgo:3}, the program goes trough the code of $\langle \langle \alpha_\gamma,X_\gamma \rangle \mid \gamma <\eta\rangle$ while increasing a counter $\alpha$ as follows: each time that the program sees the sequence $011$, it looks for the first $1$ after it and then increases $\alpha$ by $1$. As soon as $\alpha = \beta$, the program copies the input tape from the current position until the first occurence of the sequence $011$, then it copies all $0$s following this and stops as soon as it reaches a cell with a $1$. 
    
    For \ref{BasicORDCodingAlgo:4}, the program goes trough the code of $\langle \langle \alpha_\gamma,X_\gamma \rangle \mid \gamma <\eta\rangle$ while increasing a counter $\alpha$ as for \ref{BasicORDCodingAlgo:3} and copying the input tape on the output tape. Once $\beta=\alpha$ the program continues to go through the representation of $X$ without copying it to the output tape the until it sees the first occurrence of the sequence $011$ and the next $1$ after that. Then the program continues copying the input tape to the output tape until it sees the sequence $1111$.
    
    Note that \ref{BasicORDCodingAlgo:5} can be trivially computed by an algorithm that copies the representation of $\langle \langle \alpha_\gamma,X_\gamma \rangle \mid \gamma <\eta\rangle$ on the output tape except for the sequence $1111$. Then the program copies the representation of $\langle \alpha,X\rangle$ on the output tape and writes the sequence $1111$ when done. 
    
    Finally, \ref{BasicORDCodingAlgo:6} can be computed by combining the previous items. Note, in particular, that our choice of coding allows us an OTM to recognise when it reaches the end of a code.
\end{proof}

By Lemma \ref{Lemma:BasicORDCodingAlgo} we are now justified in treating tapes as queues. We will make plenty of use of this convention. Moreover, we will use the expressions \vir{bounded search}, \vir{bounded search through a set}, and \vir{search through a set} interchangeably. If $c$ is a code for an OTM-program, $P$ a parameter-set, and $X$ and $Y$ are sets of ordinals, we will write $c_P(X) = Y$ to mean that an OTM-computation of the program $c$ with parameters $P$ and input $\chi^{-1}(X)$ halts with $\chi^{-1}(Y)$ on the output tape. We write $c_P(X){\uparrow}$ if this computation does not halt. 

By using the coding defined in the previous section we can extend the notion of computability to $\Ve$. Given two sets $X$ and $Y$, we write $c_P(X) = Y$ when, given a code $a$ for $X$, $c_P(a)$ computes a code of $Y$. Note that the class relation $c_P$ on $\Ve$ is in general \emph{not} a function but a multi-valued function---indeed, the set coded by the output of $c_P$ could depend on the specific code of $X$. Figure \ref{Fig:codings} shows how the codings interact with each other.

\begin{figure}[h] 
    \centerline{
    \xymatrix{
        \Ve \ar@<-.5ex>@{-->}_{}[d] \ar@<+.5ex>@{-->}^{R}[d]  \ar@{<-}^{a \, \mapsto \, d_a(\rho_a)}[rr]&&\On\times \wp(\On) \ar@{-->}^{\text{F}}[d] \ar@{<-}^{\chi}[rr]&& 2^{<\On}  \ar@{-->}^{\text{f}}[d] \\
        \Ve\ar@{<-}^{a \, \mapsto \, d_a(\rho_a)}[rr]&&\On \times \wp(\On) \ar@{<-}^{\chi}[rr]&& 2^{<\On}
    }}
    \caption{An illustration of the stratification of codings used in this paper. The double arrow indicates the fact that $R := \{\langle X,Y \rangle\mid c_P(X)=Y\}$ where $c$ is the program computing $f$ with parameter-set $P$ could be a multi-valued function. \label{Fig:codings}}
\end{figure}

%%%% END PASTE

% For this reason, we introduce the low-level coding via the following abstract theorem. For its proof and the exact definition of the low-level coding $\mathtt{C}^\circ$, we refer the interested reader to appendix \ref{Appendix:lowlevel}.

% \begin{theorem}
%     \label{Theorem:low-level}
%     Using the injective class-function $\mathtt{C}^\circ$, the following operations are OTM-computable:
%     \begin{enumerate}
%         \item given codes for an ordinal and a set of ordinal, checking whether the ordinal is contained in the set of ordinals;
%         \item given codes for a set of ordinals and an OTM-program, taking the image of the set of ordinals under the function computed by the program;
%         \item given codes for an ordinal $\beta$ and a sequence of length ${>}\beta$, returning the $\beta$th element of the sequence;
%         \item given codes for an ordinal $\beta$ and a sequence of length ${>}\beta$, removing the $\beta$th element from the sequence;
%         \item given a code for a sequence, appending an element to the sequence; and,
%         \item conducting a bounded search through a set.
%     \end{enumerate}
% \end{theorem}
% Importantly, note that Lemma \ref{Lemma:BasicORDCodingAlgo} allows us to treat a tape of an OTM as a \textit{queue}. 

Departing from this low-level coding, we need to encode arbitrary sets as pairs of an ordinal and a set of ordinals to use them as the objects of our investigation. 

\begin{convention}
    \label{Convention:low-level}
    We will follow the usual simplifying convention when working with OTMs (and, in fact, any other kind of machine), and confuse objects with their codes. For example, we might say that `an OTM takes an ordinal $\alpha$ as input' when we mean, of course, that it `takes a (low-level) code for an ordinal $\alpha$ as input,' and similar for sets of ordinals, sequences, etc.
    It will always be clear from the context when we mean low-level codes because OTMs operate directly only on binary sequences (and not on ordinals or arbitrary sets).
\end{convention}

\subsubsection{High-level coding} 
\label{Section:Codings of Sets}

We will now define a \textit{high-level coding} for arbitrary sets. Recall that the Gödel class function $\Goedel$ extends the usual Cantor pairing function on natural numbers and maps pairs of ordinals to ordinals, see, e.g., \cite[p.31]{Jech}. Given a set $X$ we will denote its transitive closure  by $\mathrm{tc}(X)$ (see, e.g., \cite[p.64]{Jech}).

\begin{definition} 
    \label{Def:pre-code}
    Let $X$ be a set. A \emph{pre-code for $X$} is a set $C$ of ordinals such that there is a bijection $d_C: \alpha \to \mathrm{tc}(\{X\})$ with the property that $\Goedel(\beta,\gamma) \in C$ if and only if $d_C(\beta) \in d_C(\gamma)$. We say that $C$ is a pre-code if it is a pre-code for some set $X$.
\end{definition}

\begin{definition}
    The \textit{high-level coding $\mathtt{C}$} is the partial surjective class function $\Ord \times \wp(\Ord) \to \Ve$ given by $\mathtt{C}(\langle \rho,C \rangle) = d_C(\rho)$. If $\mathtt{C}(\langle \rho, C \rangle) = x$, we say that $\langle \rho, C \rangle$ is a \textit{code of} $x$ or that it \textit{encodes} $x$. We will also say that $\rho$ is the \emph{representative of $x$ in $C$}.
\end{definition}

We say that a tuple $\langle \beta, C \rangle$ is a code if it is a code of some set $x$. We say that a code $c$ is based on the pre-code $C$ for $y$ if $c = \langle \beta, C \rangle$ for some ordinal $\beta$. If $c$ is a code for $x$ based on a pre-code $C$ for $y$, we will write $\rho_c$ for the representative of $x$ in $C$, and $d_c$ for the bijection $d_C$.

The pre-code on which a code is based may contain much more information than actually needed for the coding. For this reason, we introduce the following notion of \textit{essential domain} containing only the information that is strictly needed to recover the set.

% \begin{definition}
%     Let $Y$ be a set. A \emph{code for $Y$} is a tuple $\langle \beta, C \rangle$ consisting of a pre-code $C$ for some set $X$ and an ordinal $\beta \in \dom(C)$ such that $d_C(\beta) = Y$. The ordinal $\beta$ is called the \emph{representative of $Y$ in $C$}, and we also say that the code for $Y$ is \emph{based} on the pre-code $C$ for $X$.
% \end{definition}
\begin{definition}
    Let $c$ be a code. The \emph{essential domain $\essdom(c)$} of $c$ is the set
    $$
         \essdom(c) = \Set{\alpha\in \dom(d_c)}{ d_c(\alpha) \in \mathrm{tc}(\{ d_c(\rho_c) \})}.
    $$
\end{definition}
Intuitively speaking, the essential domain of a code $c$ of a set $x$ contains the transitive closure of $x$ as coded by $c$ but not more than that. There can be many codes for one set. For this reason, we introduce the following notion of isomorphism between codes. Crucially, these isomorphisms are only between the essential domains of a code.

\begin{definition}
    Let $a$ and $b$ be codes. A bijection $f: \essdom(a) \to \essdom(b)$ is called an \emph{isomorphism of codes} if $f(\rho_a) = \rho_b$ and $d_a(\alpha) = d_b(f(\alpha))$ for all $\alpha \in \essdom(a)$. 
\end{definition}

As usual, we will say that two codes are \emph{isomorphic} if there is an isomorphism of codes between them. 

\begin{lemma}\label{Lemma:IsoCodEq}
    Let $X$ and $Y$ be sets. If $a$ is a code of $X$, $b$ is a code of $Y$, and $f$ a code-isomorphism between them, then $X = Y$. 
\end{lemma}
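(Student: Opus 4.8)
The plan is to unwind the definitions and invoke the defining property of a code-isomorphism at the representatives. First I would record the elementary fact that the representative $\rho_a$ always lies in the essential domain of $a$: by definition of the high-level coding we have $d_a(\rho_a) = x$, and since trivially $x \in \mathrm{tc}(\{x\})$, the ordinal $\rho_a$---which lies in $\dom(d_a)$---satisfies the membership condition $d_a(\rho_a) \in \mathrm{tc}(\{d_a(\rho_a)\})$ defining $\essdom(a)$; hence $\rho_a \in \essdom(a)$. The symmetric argument gives $\rho_b \in \essdom(b)$, and, again by definition of the high-level coding, $d_b(\rho_b) = y$.

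Now, since $f \colon \essdom(a) \to \essdom(b)$ is an isomorphism of codes, it satisfies $f(\rho_a) = \rho_b$ and $d_a(\alpha) = d_b(f(\alpha))$ for every $\alpha \in \essdom(a)$. Instantiating the second condition at $\alpha = \rho_a$---legitimate by the previous paragraph---and combining it with $f(\rho_a) = \rho_b$ yields
$$
    x = d_a(\rho_a) = d_b(f(\rho_a)) = d_b(\rho_b) = y,
$$
which is the claim.

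I do not expect any genuine obstacle here: the statement is all but immediate from the definition of code-isomorphism, the sole point requiring a one-line check being that the representatives $\rho_a$ and $\rho_b$ belong to their essential domains, so that the pointwise agreement $d_a(\alpha) = d_b(f(\alpha))$ may be applied at $\alpha = \rho_a$. In fact one uses neither that $f$ is a bijection nor that it respects the $\Goedel$-coded membership relation---agreement at the single argument $\rho_a$ already suffices---though these stronger features of $f$ will of course be indispensable for the later results that build on this notion of isomorphism of codes.
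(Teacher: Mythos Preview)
Your proof is correct and follows exactly the same chain of equalities as the paper's one-line argument $x = d_a(\rho_a) = d_b(f(\rho_a)) = d_b(\rho_b) = y$. Your additional verification that $\rho_a \in \essdom(a)$ is a fair point of care that the paper leaves implicit.
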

\begin{proof}
    We see that $X = d_a(\rho_a) = d_b(f(\rho_a)) = d_b(\rho_b) = Y$.
\end{proof}

We will now prove some helpful properties of (high-level) codes. In doing so, we will rely on Lemma \ref{Lemma:BasicORDCodingAlgo} and Convention \ref{Convention:low-level}.

\begin{lemma}\label{Lemma:BasicAlgo}
    Let $a$ and $b$ be codes of sets.
    \begin{enumerate}
        \item \label{Lemma:BasicAlgo1} The set $\essdom(a)$ is computable, i.e., given a code for $a$, we can compute a binary sequence $b \in 2^{<\Ord}$ such that $b(\alpha)$ is $1$ if and only if $\alpha \in \essdom(a)$.
        \item \label{Lemma:BasicAlgo2} The function that given $\alpha\in \dom(d_a)$ returns the unique $\beta\in \dom(d_b)$ such that $d_a(\alpha)=d_b(\beta)$ if such an ordinal exists and returns $\dom(d_b)$ otherwise is OTM-computable.
        \item \label{Lemma:BasicAlgo3} The function that given $\alpha\in \dom(d_a)$ and $\beta\in \dom(d_b)$ returns $1$ if and only if $d_a(\alpha)\subseteq d_b(\beta)$ is OTM-computable.
        \item \label{Lemma:BasicAlgo4} There is a program that, given a code of a sequence $\langle a_\beta \mid \beta < \alpha \rangle$ of codes of sets, returns a code for the set $\{d_{a_\beta}(\rho_{a_\beta})\mid \beta<\alpha\}$. 
        \item \label{Lemma:BasicAlgo5} Let $\phi$ be a computable property of sets, i.e., there is a program $c$, possibly with parameter $P$, such that $c_P(a)=1$ if and only if $\phi[Y/x]$ for every code $a$ of a set $Y$. There is a program, possibly with parameters, that given a code of a set $X$ as input computes a code for $\{Y \in X \mid \phi[Y/x]\}$. 
    \end{enumerate}
    In particular, note that the relations $X \in Y$, $X \subseteq Y$, and $X = Y$ are computable. 
\end{lemma}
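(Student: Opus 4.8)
The plan is to prove each item by exhibiting an explicit OTM-procedure, freely using Theorem~\ref{Theorem:low-level} and Convention~\ref{Convention:low-level} to treat a tape as a stack and to perform membership tests, images under programs, and bounded searches over sets of ordinals. Write $a=\langle\rho_a,C_a\rangle$ and $b=\langle\rho_b,C_b\rangle$; from $C_a$ one recovers $\dom(d_a)$ and, for any $\alpha\in\dom(d_a)$, its set of \emph{children} $\Set{\gamma}{\Goedel(\gamma,\alpha)\in C_a}$, i.e.\ the indices of the elements of $d_a(\alpha)$. For~(1), note that $\alpha\in\essdom(a)$ holds exactly when there is a finite chain of child-edges in $C_a$ from $\alpha$ up to $\rho_a$ (equivalently, a finite $\in$-chain from $d_a(\alpha)$ to $d_a(\rho_a)$); hence $\essdom(a)$ is the closure of $\{\rho_a\}$ under the child operation. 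This is computed by a transfinite loop maintaining a binary sequence $S\subseteq\dom(d_a)$: start with $S=\{\rho_a\}$ and repeatedly adjoin all children of all current members of $S$. The successive values of $S$ increase monotonically within $\dom(d_a)$, so the loop stabilises, and the stable value is the desired characteristic sequence.

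For~(2), (3) and the concluding remark, the crux is that the relations $d_a(\alpha)=d_b(\beta)$, $d_a(\alpha)\in d_b(\beta)$ and $d_a(\alpha)\subseteq d_b(\beta)$ are OTM-decidable uniformly in $a,b,\alpha,\beta$. By extensionality, $d_a(\alpha)=d_b(\beta)$ iff every child $\gamma$ of $\alpha$ in $C_a$ has a child $\gamma'$ of $\beta$ in $C_b$ with $d_a(\gamma)=d_b(\gamma')$, and conversely; this defines equality by recursion along $(\gamma,\gamma')\prec(\alpha,\beta):\Leftrightarrow\Goedel(\gamma,\alpha)\in C_a\wedge\Goedel(\gamma',\beta)\in C_b$, a relation that is OTM-decidable and well-founded (a $\prec$-descending chain yields an $\in$-descending chain of sets, contradicting Foundation). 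OTMs evaluate such recursions by first computing the $\prec$-rank function via the obvious monotone transfinite iteration---which stabilises since $\prec$ is well-founded and set-sized---and then filling in the equality table in order of increasing rank, each entry being a bounded computation in terms of strictly lower-rank entries. Given equality, $d_a(\alpha)\in d_b(\beta)$ is decided by a bounded search for a child $\gamma'$ of $\beta$ with $d_a(\alpha)=d_b(\gamma')$, and $d_a(\alpha)\subseteq d_b(\beta)$ by checking that every child of $\alpha$ in $C_a$ is, in this sense, an element of $d_b(\beta)$: the latter is~(3), and instantiating all three at $\alpha=\rho_a$, $\beta=\rho_b$ yields the decidability of $X=Y$, $X\in Y$, $X\subseteq Y$. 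For~(2), search for $\beta\in\dom(d_b)$ with $d_a(\alpha)=d_b(\beta)$, returning this (necessarily unique) $\beta$, or $\dom(d_b)$ if there is none.

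For~(4) we amalgamate the essential parts of the given codes into a single pre-code. Using~(1) compute $\essdom(a_\beta)$ for each $\beta<\alpha$, and then a bijection $e\colon D'\to\bigl(\bigsqcup_{\beta<\alpha}\essdom(a_\beta)\bigr)/{\sim}$ onto the classes of $(\beta,\gamma)\sim(\beta',\gamma'):\Leftrightarrow d_{a_\beta}(\gamma)=d_{a_{\beta'}}(\gamma')$, decidable by the previous item; each class has a well-defined \emph{value} lying in $\bigcup_{\beta<\alpha}\mathrm{tc}(\{d_{a_\beta}(\rho_{a_\beta})\})$. Put $\nu:=D'$ and let $C$ consist of all $\Goedel(\mu_1,\mu_2)$ with $\mu_1,\mu_2<D'$ and $\mathrm{value}(e(\mu_1))\in\mathrm{value}(e(\mu_2))$ (decidable via the $\in$-test on the relevant sub-codes), together with all $\Goedel(\mu,\nu)$ with $\mu<D'$ and $\mathrm{value}(e(\mu))=d_{a_\beta}(\rho_{a_\beta})$ for some $\beta<\alpha$ (a bounded search). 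Since $\bigcup_{\beta<\alpha}\mathrm{tc}(\{d_{a_\beta}(\rho_{a_\beta})\})$ is transitive and, by Foundation, does not contain $T:=\{d_{a_\beta}(\rho_{a_\beta})\mid\beta<\alpha\}$, the relation coded by $C$ on $D'+1$ is well-founded and extensional, its transitive collapse maps $D'+1$ bijectively onto $\mathrm{tc}(\{T\})$ with $\nu$ collapsing to $T$; thus $\langle\nu,C\rangle$ is the required code, and the construction is OTM-computable throughout.

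Finally, for~(5): given a code $a$ for $X$, enumerate the children $\gamma$ of $\rho_a$ in $C_a$; for each, $a_\gamma:=\langle\gamma,C_a\rangle$ is a code of $d_a(\gamma)$, so by hypothesis $c_P(a_\gamma)=1$ iff $\phi[d_a(\gamma)/x]$. Collect those $a_\gamma$ with $c_P(a_\gamma)=1$ into a sequence and apply~(4); the result codes $\{\,d_a(\gamma)\mid\Goedel(\gamma,\rho_a)\in C_a\text{ and }\phi[d_a(\gamma)/x]\,\}=\{Y\in X\mid\phi[Y/x]\}$, as required. I expect~(4) to be the main obstacle: one must fuse codes possibly built over unrelated pre-codes into one well-founded extensional structure without repetitions, and check the result genuinely satisfies the definition of a pre-code---which relies on deciding equality of coded sets, hence ultimately on OTMs being able to evaluate recursions along well-founded decidable relations.
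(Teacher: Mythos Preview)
Your proposal is correct and follows the same overall strategy as the paper: traverse membership edges from the root for (i), recursively decide equality and subset via children for (ii)/(iii), amalgamate codes with duplicate elimination for (iv), and filter-then-apply-(iv) for (v). The implementation details differ in minor ways. The paper does (i) by an explicit depth-first search with a stack rather than your monotone fixed-point iteration; for (ii)/(iii) it gives two directly mutually recursive procedures $\mathtt{Decode}$ and $\mathtt{Subset}$ (appealing to well-foundedness of $\in$ for termination) rather than first computing a $\prec$-rank function and filling an equality table bottom-up; and for (iv) it first concatenates all the pre-codes by shifting their ordinals, adds a new top element, and only afterwards scans to remove duplicate representatives, whereas you restrict to essential domains and quotient by coded equality from the start. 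Your variant of (iv) is arguably cleaner, since it makes transparent that the resulting structure is a genuine pre-code for the target set $T$ (and your Foundation argument that $T\notin\bigcup_\beta\mathrm{tc}(\{d_{a_\beta}(\rho_{a_\beta})\})$ is exactly what guarantees the new top index is not already used), but the underlying ideas coincide throughout.
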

\begin{proof}
    For \ref{Lemma:BasicAlgo1}, we make use of an auxiliary tape which we will use as a queue in virtue of Lemma \ref{Lemma:BasicORDCodingAlgo}. Given a code $a = \langle \rho_a, C \rangle$, the computation of $\essdom(a)$ is implemented via a breadth-first search as follows. Enqueue $\rho_a$. Then run the following loop until there are no elements left in the queue: dequeue the first element of the queue, say ordinal $\beta$. Mark the $\beta$th position of the output tape with a $1$. Then conduct a bounded search through $C$ during which all $\alpha$ are enqueued whenever $\Goedel(\alpha,\beta) \in C$, and jump to the beginning of the loop. This procedure must eventually stop because sets are well-founded so that only set-many steps will be required to walk through the transitive closure of the set represented by $\rho_a$.
    
    For \ref{Lemma:BasicAlgo2} and \ref{Lemma:BasicAlgo3}, we provide two procedures that recursively call each other: $\mathtt{Decode}(a,b,\alpha)$ and $\mathtt{Subset}(a,b,\alpha,\beta)$, where $\mathtt{Decode}(\cdot,\cdot,\cdot)$ is the function required for \ref{Lemma:BasicAlgo2} and $\mathtt{Subset}(\cdot,\cdot,\cdot,\cdot)$ for \ref{Lemma:BasicAlgo3}. 
    
    Let's begin with $\mathtt{Decode}(a,b,\alpha)$, which takes codes $a = (\rho_a,A)$ and $b = (\rho_b,B)$ for sets as well as (a code for) an ordinal $\alpha \in \dom(d_a)$ as input. First, the program checks whether $d_a(\alpha) = \emptyset$ by conducting a bounded search looking for a $\beta \in \dom(d_a)$ such that  $\Goedel(\beta, \alpha) \in A$. If such a $\beta$ cannot be found, the program looks for the unique $\gamma$ such that $\Goedel(\beta,\gamma) \notin B$ for all $\beta \in \dom(d_B)$ which must exist as $B$ is a pre-code. The program then returns this $\gamma$ and stops. If it turns out that $d_a(\alpha)$ is non-empty, then the program goes through all the ordinals $\beta \in \dom(d_b)$ and calls $\mathtt{Subset}(a,b,\alpha,\beta)$ and $\mathtt{Subset}(b,a,\beta,\alpha)$ to check whether $d_a(\alpha) = d_b(\beta)$. If that is the case, then the program returns $\beta$. If no such $\beta$ is found, it writes $\dom(d_b)$ on the output tape.
    
    Next, consider $\mathtt{Subset}(a,b,\alpha,\beta)$, which takes as input two codes $a = \langle \rho_a,A \rangle$ and $b = \langle \rho_b,B \rangle$. This procedure should return $1$ in case $d_a(\alpha) \subseteq d_b(\beta)$ and $0$ otherwise. To do so, conduct a bounded search on $\gamma \in \dom(d_a)$ and check whether $\Goedel(\gamma,\alpha) \in A$. If so, let $\delta = \mathtt{Decode}(a,b,\gamma)$ and check whether $\Goedel(\delta,\beta) \notin B$. If this is the case, write $0$ on the output tape and stop. If the bounded search is finished without interruption, write $1$ on the output tape and stop.
    
    To finish the proof of \ref{Lemma:BasicAlgo2} and \ref{Lemma:BasicAlgo3}, note that the mutual recursive calls finish after set-many steps of computation because sets are well-founded so that, eventually, the case for $\emptyset$ will be reached. 
    
    For \ref{Lemma:BasicAlgo4}, let $\langle a_\beta \mid \beta < \alpha \rangle$ be given with $a_\beta = \langle \rho_{a_\beta}, A_\beta \rangle$. Set $\sigma_0 = \rho_{a_0}$ and recursively construct $\sigma_\beta$ for $\beta \leq \alpha$ as follows. 
    If $\beta = \gamma + 1$, then let $\sigma_\beta = \sup(\{ \sigma_\gamma + \delta \mid \delta \in A_\gamma \}) + 1$. If $\beta$ is a limit, then let $\sigma_\beta = \sup(\{ \sigma_\gamma \mid \gamma < \beta \})$. Note that all this can be straightforwardly computed by an OTM. Then define the set $A$ as follows:
    $$
        A = \left( \bigcup_{\beta < \alpha} \{ \Goedel(\sigma_\beta + \delta, \sigma_\beta + \gamma) \mid \delta, \gamma \in \Ord \text{ such that } \Goedel(\delta,\gamma) \in A_\beta \} \right) \cup \{ \Goedel(\sigma_\beta + \rho_{a_\beta},\sigma_\alpha) \mid \beta < \alpha \}.
    $$
    Intuitively, the first half obtains a concatenation of all the sets coded by $a_\beta$ and the second half makes sure that all the sets coded by $a_\beta$ are members of the new set.
    Now, we might not yet have a code: if there are $\gamma, \delta < \alpha$ such that the sets coded by $a_\gamma$ and $a_\delta$ are not disjoint, then their common elements will appear twice. For this reason, we have to remove duplicates from $A$ and obtain a pre-code $A'$. This can easily be done with a bounded search, using the $\mathtt{Subset}(\cdot,\cdot,\cdot,\cdot)$ routine, replacing duplicate occurrences with the first occurrence and removing those that are not needed. Finally, return the code $\langle \sigma_\alpha,A' \rangle$.

    Finally, \ref{Lemma:BasicAlgo5} is witnessed by the following algorithm: search through the code of $X$ and check whether $\phi[Y/x]$ holds for each $Y \in X$. If so, then the machine adds a code of $Y$ to an auxiliary queue. Once the program has checked all the elements of $X$, it uses the algorithm presented in the proof of \ref{Lemma:BasicAlgo4} to compute the desired code from the sequence saved in the queue.
\end{proof}

Given that an OTM-program has access to the particular codes of a set, it can make use of, for example, the well-ordering that is implicit in the coding. To avoid any unwarranted side-effects of this, we introduce the notion of \textit{uniform} OTM-programs. Intuitively, an OTM-program is uniform if running the program with codes for the same set(s) results in codes for the same set(s).

\begin{definition}[Uniform OTM]
    An OTM-program $c$, potentially using parameters $P$, is called \textit{uniform} if whenever two codes $\seq{\rho_0,C_0}$ and $\seq{\rho_1,C_1}$ code the same set, then $c_P(\seq{\rho_0,C_0})$ and $c_P(\seq{\rho_1,C_1})$ are (potentially different) codes of the same set.
\end{definition}

\begin{convention}
    \label{Convention:high-level}
    As with the low-level coding, we will follow the usual convention of (ordinal) computability theory and confuse objects and their codes. As in Convention \ref{Convention:low-level}, it will always be clear (or does not matter) which coding is applied when: for example, if we say that an OTM takes a set as input, then we mean that it takes as input the low-level code of a high-level code for a set. 
\end{convention}

% \mytodo{Discuss multi-valued functions and layers of codings}
% as indicated in the following diagram:

% \begin{figure}[h] 
%     \centerline{
%     \xymatrix{
%         \Ve \ar@<-.5ex>@{-->}_{}[d] \ar@<+.5ex>@{-->}^{R}[d]  \ar@{<-}^{a \, \mapsto \, d_a(\rho_a)}[rr]&&\On\times \wp(\On) \ar@{-->}^{\text{F}}[d] \ar@{<-}^{\chi}[rr]&& 2^{<\On}  \ar@{-->}^{\text{f}}[d] \\
%         \Ve\ar@{<-}^{a \, \mapsto \, d_a(\rho_a)}[rr]&&\On \times \wp(\On) \ar@{<-}^{\chi}[rr]&& 2^{<\On}
%     }}
%     \caption{An illustration of the stratification of codings used in this paper. The double arrow indicates the fact that $R := \{\langle X,Y \rangle\mid c_P(X)=Y\}$ where $c$ is the program computing $f$ with parameter-set $P$ could be a multi-valued function. \label{Fig:codings}}
% \end{figure}
%%%%

\section{A Notion of Transfinite Realisability}
\label{Section: A Notion of Transfinite Realisability}

\subsection{OTM-Realisability}

In this section, we will define the realisability relation $\Vdash$ as a relation between potential realisers, viz., OTM programs with parameters, and formulas in the infinitary language of set theory $\Lo^\in_{\infty, \infty}$.

\begin{definition}
    A \emph{potential realiser} is a tuple $\langle c,P\rangle$ consisting of an OTM-program $c$ and a set $P$ of ordinals.
\end{definition}

\begin{remark}    
    In Kleene's classical realisability with Turing Machines for arithmetic, every element of the universe, i.e., every natural number, is computable. To make sure that our notion has the same power, we use OTMs with set parameters as potential realisers. OTMs with a single ordinal parameter can only compute every element of the universe in case that $\Ve = \Le$, and OTMs without parameters can only compute countably many elements. The seeming imbalance of finite programs with arbitrary parameters could be resolved by moving to the equivalent version of OTMs that can run programs of transfinite length studied by Lewis \cite{EthanThesis} under the supervision of Benedikt L\"owe and the second author.
\end{remark}

As potential realisers are just OTMs with parameters, we can keep our conventions and write $r(x) = y$ in case that $r = \langle c,P\rangle$ is a potential realiser such that $c_P(x) = y$. Similarly, we say that a potential realiser $r = \langle c,P\rangle$ computes a function if the OTM running program $c$ on parameters $P$ computes that function. 

To define the realisability relation, we subtly extend the infinitary language of set theory with a constant symbol for every set in the universe $\Ve$. To simplify notation, and without creating any confusion, we use the same letters to denote sets and their corresponding constant symbols.  

\begin{definition}[Substitution of Contexts]
    Let $\mathbf{x}$ be a context of length $\alpha$, $\bar X=\langle X_{i} \mid i<\alpha\rangle$ be a sequence of length $\alpha$, and $\phi$ be a formula in context $\mathbf{x}$. Then $\phi[\bar X / \mathbf{x}]$ is the formula obtained by replacing the free occurrences of the variable $x_{i}$ in $\phi$ with the (constant symbol for the) set $X_{i}$. 
\end{definition}

\begin{definition}[OTM-Realisability of $\Lo^\in_{\infty, \infty}$]
    \label{Definition:OTM-Realisability}
    The OTM-realisability relation $\Vdash$ is recursively defined as a relation between the class of potential realisers and the class of formulas in the infinitary language of set theory $\Lo^\in_{\infty, \infty}$ as follows:
    \begin{enumerate}
        \item $r \Vdash \bot$ is never true,
        \item $r \Vdash X = Y$ if and only if $r$ computes a code-isomorphism for every pair of codes for $X$ and $Y$,
        \item $r \Vdash X \in Y$ if and only if for any codes $a$ for $X$ and $b$ for $Y$, it holds that $r(a,b) = \langle \alpha, s\rangle$ such that $d_{b}(\alpha) \in Y$ and $s \Vdash X = d_b(\alpha)$,
        \item $r \Vdash \phi \rightarrow \psi$ if and only if for every $s \Vdash \phi$ we have that $r(s) \Vdash \psi$,
        \item $r \Vdash \bigvee_{\alpha < \beta} \phi_\alpha$ if and only if $r(0) = \langle \gamma,s\rangle$ such that $s \Vdash \phi_\gamma$,
        \item $r \Vdash \bigwedge_{\alpha < \beta} \phi_\alpha$ if and only if $r(\alpha) \Vdash \phi_\alpha$ for all $\alpha < \beta$, 
        \item $r \Vdash \exists \mathbf{x} \phi$ if and only if $r(0) = \langle a,s \rangle$ such that $a$ is a code for $\bar X$ and $s \Vdash \phi[\bar{X}/\mathbf{x}]$, and,
        \item $r \Vdash \forall \mathbf{x} \phi$ if and only if $r(a) \Vdash \phi[\bar{X}/\mathbf{x}]$ for every code $a$ of every sequence $\bar{X}$ of length $\ell(\mathbf{x})$.
    \end{enumerate}
    A formula $\varphi$ is \textit{realised} if there is a potential realiser $\langle c,P \rangle\Vdash \varphi$. In this situation, we will call $\langle c,P \rangle$ a \emph{realiser} of $\varphi$.
\end{definition}

OTM-programs do not necessarily give rise to functions from sets to sets because the result of a computation may depend on specific features of the code of a set. Consider, for example, a realiser $r \Vdash \forall x \exists y (x \neq \emptyset \rightarrow y \in x)$. Intuitively, the realiser $r$ must do the following: given a code for a non-empty $x$, compute the code for a set $y \in x$. A simple implementation of this would be to search through the code of $x$ until the first element of $x$ is found (if there is any) and return the code for this element. However, there are obviously codes $c_0$ and $c_1$ of  $\{ 0, 1 \}$ such that $r(c_i)$ returns a code of $i$ for $i < 2$. This shows that realisers may make use of specific features of our codes and give rise to multi-valued functions. In conclusion, it is natural to consider the following restricted notion of realisability in which realisers cannot make use of the features of specific codes.

\begin{definition}
    We obtain \emph{uniform realisability} by restricting the class of potential realisers to uniform OTMs.
\end{definition}

As we will see in Theorem \ref{V=L uniform is the same as realisable}, if $\Ve=\Le$, then the notions of uniform realisability and realisability coincide. On the other hand, if $\Ve\neq\Le$ then the two notions may differ. Indeed, by Theorem \ref{Theo:notAC} there is a model of set theory in which the previous example provides a formula which is realised but not uniformly realised.

\subsection{Basic Properties of OTM-Realisability}

We are now ready to establish some basic properties of OTM-realisability. First, we simplify the conditions for set-membership and equality in our definition of realisability.

\begin{lemma}
    \label{Lemma: equality and set-membership are trivial}
    Let $X$ and $Y$ be sets.
    \begin{enumerate}
        \item \label{Lemma:RealEq1} There is a uniform realiser of $X = Y$ if and only if $X = Y$.
        \item \label{Lemma:RealEq2} There is a uniform realiser of $X \in Y$ if and only if $X \in Y$.
    \end{enumerate}
\end{lemma}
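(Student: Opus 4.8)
The plan is to prove both biconditionals direction by direction, with the ``only if'' directions (from realisability to the set relation) coming cheaply from Lemma~\ref{Lemma:IsoCodEq} and the ``if'' directions requiring concrete OTM programs built from the routines of Lemma~\ref{Lemma:BasicAlgo}. For the ``only if'' half of \ref{Lemma:RealEq1}, suppose $r \Vdash X = Y$; since $\mathtt{C}$ is surjective we may fix codes $a$ of $X$ and $b$ of $Y$, and then $r(a,b)$ is a code-isomorphism between $a$ and $b$ by clause~(2) of Definition~\ref{Definition:OTM-Realisability}, so $X = Y$ by Lemma~\ref{Lemma:IsoCodEq}; note this uses only that $r$ is a realiser, not that it is uniform. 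For the ``only if'' half of \ref{Lemma:RealEq2}, suppose $r \Vdash X \in Y$ and fix codes $a$ of $X$ and $b$ of $Y$; by clause~(3) we have $r(a,b) = \langle \alpha, s \rangle$ with $d_b(\alpha) \in Y$ and $s \Vdash X = d_b(\alpha)$, whence $X = d_b(\alpha)$ by the case just treated, and therefore $X \in Y$.

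For the ``if'' half of \ref{Lemma:RealEq1}, assume $X = Y$ and exhibit a single program $c_{=}$, with empty parameter, that realises all such equalities at once. On input a pair of codes $(a,b)$, the program first computes $\essdom(a)$ via Lemma~\ref{Lemma:BasicAlgo}\ref{Lemma:BasicAlgo1}, and then, for each $\alpha \in \essdom(a)$, computes $\beta_\alpha := \mathtt{Decode}(a,b,\alpha)$, the unique $\beta \in \dom(d_b)$ with $d_b(\beta) = d_a(\alpha)$ (Lemma~\ref{Lemma:BasicAlgo}\ref{Lemma:BasicAlgo2}), which exists because $d_a(\alpha) \in \mathrm{tc}(\{X\}) = \mathrm{tc}(\{Y\})$. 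It then outputs the function $f = \{ \Goedel(\alpha, \beta_\alpha) \mid \alpha \in \essdom(a) \}$. To see that $\langle c_{=}, \emptyset \rangle \Vdash X = Y$, it remains to check that $f$ is the unique code-isomorphism $\essdom(a) \to \essdom(b)$: since $d_a \restriction \essdom(a)$ and $d_b \restriction \essdom(b)$ are bijections onto the common set $\mathrm{tc}(\{X\})$, the map $f$ is a bijection; $d_a(\alpha) = d_b(f(\alpha))$ holds by construction; and $f(\rho_a) = \rho_b$ since $d_a(\rho_a) = X = Y = d_b(\rho_b)$.

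For the ``if'' half of \ref{Lemma:RealEq2}, assume $X \in Y$ and define a program $c_{\in}$, again with empty parameter. On input $(a,b)$ with $b = \langle \rho_b, B \rangle$, the program searches through $B$ for the least $\alpha$ such that $\Goedel(\alpha, \rho_b) \in B$ — so that $d_b(\alpha) \in Y$ — and such that moreover the code $\langle \alpha, B \rangle$ of $d_b(\alpha)$ is equal to $X$; the latter test is decidable because set-equality is computable by Lemma~\ref{Lemma:BasicAlgo}, applied to $a$ and $\langle \alpha, B \rangle$. Such an $\alpha$ exists and is unique: $X \in Y$ gives exactly one $\alpha \in \dom(d_b)$ with $d_b(\alpha) = X$, and then $d_b(\alpha) = X \in Y = d_b(\rho_b)$ forces $\Goedel(\alpha, \rho_b) \in B$ by the defining property of pre-codes. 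The program returns $\langle \alpha, \langle c_{=}, \emptyset \rangle \rangle$; since $d_b(\alpha) = X \in Y$ and, by the previous paragraph, $\langle c_{=}, \emptyset \rangle \Vdash X = d_b(\alpha)$, we conclude $\langle c_{\in}, \emptyset \rangle \Vdash X \in Y$.

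The main obstacle is not the algorithms, which are short given Lemma~\ref{Lemma:BasicAlgo}, but the verifications around them: checking that the object produced by $c_{=}$ meets every clause of the definition of a code-isomorphism — above all that restricting $d_a$ and $d_b$ to the essential domains gives bijections onto $\mathrm{tc}(\{X\})$, so that $f$ is bijective — and arguing that $\langle c_{=}, \emptyset \rangle$ and $\langle c_{\in}, \emptyset \rangle$ count as \emph{uniform}. For the latter, the point is that these programs make no genuine choice: between any two fixed codes there is exactly one code-isomorphism, and within a fixed code of $Y$ there is exactly one index naming $X$, so the programs cannot exploit incidental features of a code to return inequivalent outputs.
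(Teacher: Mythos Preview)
Your proof is correct and follows essentially the same approach as the paper: both directions of \ref{Lemma:RealEq1} use Lemma~\ref{Lemma:IsoCodEq} and the $\mathtt{Decode}$ routine from Lemma~\ref{Lemma:BasicAlgo} to build the code-isomorphism, and \ref{Lemma:RealEq2} is reduced to \ref{Lemma:RealEq1}. Your bijectivity argument via the composition $(d_b\restriction\essdom(b))^{-1}\circ(d_a\restriction\essdom(a))$ is slightly slicker than the paper's separate injectivity/surjectivity check, and for \ref{Lemma:RealEq2} the paper calls $\mathtt{Decode}(a,b,\rho_a)$ directly rather than searching, but these are cosmetic differences; your additional remarks on uniqueness of the isomorphism and of the index are correct and make the uniformity claim more explicit than the paper does.
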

\begin{proof}    
For \ref{Lemma:RealEq1}, the left-to-right direction follows from the definition of the realisability relation $\Vdash$ and Lemma \ref{Lemma:IsoCodEq}. For the right-to-left direction assume that $X = Y$. Let $a$ and $b$ two codes for $X$ based on the pre-codes $A$ and $B$, respectively. Let $c$ be a code for a program that returns the code of the algorithm $\mathtt{Decode}(\cdot,\cdot,\cdot)$ from Lemma \ref{Lemma:BasicAlgo} and the ordinal $\mathrm{Decode}(a,b,\rho_a)$. We will show that $\langle c, \emptyset \rangle \Vdash X = Y$. To this end, we define $f(\alpha)=\mathrm{Decode}(a,b,\alpha)$ for every $\alpha \in \essdom(a)$. It suffices to show that $f$ is a bijection from $\essdom(a)$ to $\essdom(b)$ to see that it is a code-isomorphism.

First, to see that $f$ is an injection, let $\alpha\in \essdom(a)$. Then $d_a(\alpha)\in X=Y$, and there is $\beta \in \dom(d_b)$ such that $d_b(\beta)=d_a(\alpha)\in X=Y$. But then, by Lemma \ref{Lemma:BasicAlgo}, $f(\alpha)=\beta$ and, since $d_b(\beta)\in \mathrm{tc}(\{d_b(\rho_b)\})=\mathrm{tc}(\{Y\})$, we have $\beta \in \essdom(b)$ by definition. Now let $\alpha\neq \beta$ be in $\essdom(a)$, then by injectivity of $d_a$ we have that $d_a(\alpha)\neq d_a(\beta)$. Finally by Lemma \ref{Lemma:BasicAlgo} $d_b(f(\alpha))=d_a(\alpha)\neq d_a(\beta)=d_b(f(\beta))$. So, $f$ is injective.

Second, to see that $f$ is surjective, let $\beta \in \essdom(b)$. Then, since $X=Y$, there is $\alpha\in \essdom(a)$ such that $d_a(\alpha)=d_b(\beta)$. But then, by Lemma \ref{Lemma:BasicAlgo}, $f(\alpha)$ is such that $d_b(f(\alpha))=d_a(\alpha)=d_b(\beta)$, and since $d_b$ is a bijection, we have that $f(\alpha)=\beta$ as desired. This concludes the proof of \ref{Lemma:RealEq1}.

For \ref{Lemma:RealEq2}, first assume that there is a realiser $\langle c, P \rangle \Vdash X \in Y$. Let $a$ and $b$ be any two codes of $X$ and $Y$, respectively. By definition of the realisability relation $\Vdash$, the program $c$ computes an ordinal $\alpha$ such that $d_b(\alpha) = X$ and a realiser of $X = d_b(\beta)$ so the claim follows from the previous case. For the right-to-left direction it is enough to note that, using $\mathtt{Decode}(\cdot,\cdot,\cdot)$, one can easily compute the ordinal $\alpha$ such that $d_b(\alpha)\in d_b(\rho_b)$ and the realiser of $X=d_b(\alpha)$. This concludes the proof of \ref{Lemma:RealEq2}.
\end{proof}

\begin{lemma}{\label{computing minimal codes}}
    There is an OTM-program $P_{\text{min}}$ such that, for every constructible code $a$ of a set $X \in \Le$, $P_{\text{min}}(c)$ computes the $<_{\Le}$-minimal code of $X$, where $<_{\Le}$ is the canonical well-ordering of $\Le$.
\end{lemma}
\begin{proof}
    $P_{\text{min}}$ works by successively writing codes for all constructible levels on the tape (see Koepke \cite{Koepke} for the details on how an OTM can write codes for levels of the constructible hierarchy). For each such level, 
    it checks whether it contains a code for the set coded by $a$ (i.e., $X$). As soon as such an $\Le$-level $\Le_{\alpha}$ has been found, it searches through $\Le_{\alpha}$ in the $<_{\Le}$-ordering to determine the $<_{\Le}$-minimal such code and write it to the output tape.  
\end{proof}

\begin{theorem}\label{V=L uniform is the same as realisable}
    If $\Ve=\Le$ then the following statements are equivalent:
    \begin{enumerate}
        \item $\varphi$ is realised;
        \item $\varphi$ is uniformly realised.
    \end{enumerate} 
\end{theorem}
\begin{proof}
Clearly, if a statement $\phi$ is uniformly realised, then it is realised. 

 For the reverse direction, suppose that $\phi$ is realised, and that $\langle c,P\rangle$ realises $\phi$. 
We may assume inductively that the statement holds for all subformulas of $\phi$; we also note that the only point in which the definitions of realiser and uniform realiser differ is in the case of  quantification, so that we can assume without loss of generality that $\phi$ is (i) $\forall{x}\psi$ or (ii) $\exists{x}\psi$ for some formula $\psi$. 

In these cases, in order to obtain a uniform realiser from the plain realiser $\langle c,P\rangle$, we simply start by applying $P_{\text{min}}$ to the given set before passing it over to $\langle c,P \rangle$. In this way, all codes for sets will be replaced by the corresponding $<_{\Le}$-minimal codes before further processing, so that the results of the computations will become independent of the choice of codes.
\end{proof}

\begin{lemma}\label{Lemma:Sigma1Fin}
Let $\phi(\mathbf{x})$ be an infinitary $\Sigma^{\omega}_1$-formula, and $\bar{X}$ be a sequence of the same length as $\mathbf{x}$.
The formula $\phi[\bar{X}/\mathbf{x}]$ is uniformly realised if and only if $\phi[\bar{X}/\mathbf{x}]$ is true.
\end{lemma}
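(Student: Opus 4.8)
The plan is to reduce the lemma to a single inductive claim about $\Delta^{\omega}_0$-formulas. Write $\phi = \exists x\,\psi$ with $\psi$ a $\Delta^{\omega}_0$-formula, in a context containing $\mathbf{x}$ and $x$. The ``only if'' direction of the lemma will be almost free once we know that, for $\Delta^{\omega}_0$-formulas, being realised already implies being true: if $r \Vdash \phi[\bar X/\mathbf{x}]$, then by clause~(7) of Definition~\ref{Definition:OTM-Realisability} we have $r(0) = (a,s)$ with $a$ a code of some set $W$ and $s \Vdash \psi[\bar X, W/\mathbf{x}, x]$, so $\psi[\bar X, W/\mathbf{x}, x]$ is true and hence so is $\phi[\bar X/\mathbf{x}]$. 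The substance is therefore the ``if'' direction, and both directions follow from the following sublemma, which I would prove by induction on the build-up of $\psi$ with a suitably strengthened statement.

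\emph{Sublemma.} There is a single OTM-program $\mathtt{e}$ such that, for every $\Delta^{\omega}_0$-formula $\psi(\mathbf{y})$ and every code of a sequence $\bar Y$ of the appropriate length, $\mathtt{e}$ run with a code of $\psi$ as parameter and $\bar Y$ as input halts and outputs $\langle 0 \rangle$ if $\psi[\bar Y/\mathbf{y}]$ is false, and $\langle 1, r\rangle$ with $r$ a (code of a) uniform realiser of $\psi[\bar Y/\mathbf{y}]$ if $\psi[\bar Y/\mathbf{y}]$ is true; moreover, if $\psi[\bar Y/\mathbf{y}]$ is false then it has no realiser at all. In particular, for $\Delta^{\omega}_0$-formulas the notions ``true'', ``realised'', and ``uniformly realised'' coincide.

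The induction runs through the cases of the grammar. For $\bot$ everything is trivial. For atomic $y_i = y_j$ and $y_i \in y_j$: decidability is the ``in particular'' clause of Lemma~\ref{Lemma:BasicAlgo}; in the true case the uniform realiser is the one constructed in Lemma~\ref{Lemma: equality and set-membership are trivial}; and the implication ``false $\Rightarrow$ not realised'' is the left-to-right direction of that lemma, whose argument (via Lemma~\ref{Lemma:IsoCodEq}) makes no use of uniformity. For $\chi_0 \rightarrow \chi_1$: classically this is true iff $\chi_0$ is false or $\chi_1$ is true, both decidable by the induction hypothesis; if it is true with $\chi_0$ true then $\chi_1$ is true, and a uniform realiser is the program that discards its argument and outputs the realiser of $\chi_1$ given by the induction hypothesis; if it is true because $\chi_0$ is false, then $\chi_0$ has no realiser at all, so any fixed trivial program vacuously realises $\chi_0 \rightarrow \chi_1$; and if it is false, then $\chi_0$ is true, hence has a uniform realiser $s$, while $\chi_1$ is false, hence unrealisable, so no $r$ can satisfy $r(s) \Vdash \chi_1$. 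For $\bigvee_{\alpha<\beta}\chi_\alpha$: using that a code of the disjunction yields codes of the $\chi_\alpha$, perform a bounded search over $\beta$, calling $\mathtt{e}$ on each $\chi_\alpha$; if $\gamma$ is least with $\chi_\gamma$ true, output the realiser $0 \mapsto \langle \gamma, r_\gamma\rangle$ where $r_\gamma$ realises $\chi_\gamma$ by induction; if no $\chi_\alpha$ is true, the disjunction is false and, by induction, unrealisable. The case $\bigwedge_{\alpha<\beta}\chi_\alpha$ is dual: it is true iff every $\chi_\alpha$ is (checked by running $\mathtt{e}$), the uniform realiser is $\alpha \mapsto r_\alpha$, and if some $\chi_{\alpha_0}$ is false then so is the conjunction and any realiser $r$ would give $r(\alpha_0) \Vdash \chi_{\alpha_0}$, which is impossible. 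For the bounded finitary quantifiers $\exists^{n}\mathbf{y} \in z\,\chi$ and $\forall^{n}\mathbf{y}\in z\,\chi$ with $n < \omega$: appealing to the analysis of the abbreviation ``$\mathbf{y}\in z$'' (Lemma~\ref{Lemma:sequenceBound}) and the computability of set-membership (Lemma~\ref{Lemma:BasicAlgo}), realising these formulas amounts to locating a length-$n$ sequence $\bar Y$ that is an element of $z$ and realises $\chi$; so run a bounded search through (the code of) $z$ over the length-$n$ sequences it contains, calling $\mathtt{e}$ on $\chi$, assembling realisers in the positive cases exactly as for $\bigvee$ and $\bigwedge$, and deducing non-realisability in the negative cases as before. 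All recursive calls terminate because $\Delta^{\omega}_0$-formulas are set-sized, well-founded objects and every search above is genuinely bounded in the sense of Theorem~\ref{Theorem:low-level}.

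With the sublemma in hand, the ``if'' direction of the lemma is immediate: if $\phi[\bar X/\mathbf{x}] = \exists x\,\psi[\bar X/\mathbf{x}]$ is true, pick a set $W$ with $\psi[\bar X, W/\mathbf{x}, x]$ true, let $s$ be the uniform realiser of $\psi[\bar X, W/\mathbf{x}, x]$ provided by the sublemma, and take as realiser of $\phi[\bar X/\mathbf{x}]$ the program that, with a code of $\langle W\rangle$ and a code of $s$ written into its parameter, outputs that pair on input $0$; this computes a single-valued function and so is a uniform realiser. I expect the main obstacle to lie inside the sublemma: carrying out the truth-recursion \emph{uniformly in a code of the (set-sized) formula} — which forces one to spell out how subformulas, disjunct and conjunct indices, and bounding sets are extracted from a formula code, and to route every manipulation through Theorem~\ref{Theorem:low-level} and Lemma~\ref{Lemma:BasicAlgo} — while simultaneously synthesising uniform realisers and maintaining the auxiliary invariant that false $\Delta^{\omega}_0$-formulas have \emph{no} realisers, which is exactly what makes the $\rightarrow$ and $\forall$ cases go through. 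The fiddliest single point is the bounded-quantifier case, where one must lean on Lemma~\ref{Lemma:sequenceBound} to treat the ``$\mathbf{y} \in z$'' abbreviation cleanly.
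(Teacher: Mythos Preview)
Your overall strategy—an induction on the build-up of $\Delta^\omega_0$-formulas establishing that truth, realisability, and uniform realisability coincide—is essentially the paper's approach. You strengthen the induction hypothesis by simultaneously building a single truth-deciding, realiser-producing program $\mathtt{e}$; the paper separates that out into the later Theorem~\ref{Theo:SatRelationDelta0} and Lemma~\ref{Lemma:UniversalRealisability} and proves only the bare equivalence here, but your packaging is harmless.

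There is, however, one genuine gap: your bounded-quantifier case leans on Lemma~\ref{Lemma:sequenceBound}, and that lemma is proved \emph{using} Lemma~\ref{Lemma:Sigma1Fin}—so the appeal is circular. The underlying slip is in the reading of the definitions. The sequence abbreviation ``$\mathbf{x}\in y$'' is used to define bounded quantifiers only for $\alpha\geq\omega$; for $\alpha<\omega$ the paper says explicitly that bounded quantifiers are ``defined as usual''. Since $\Delta^\omega_0$-formulas have \emph{no} infinitary quantifiers, you never meet the sequence abbreviation in this lemma: every quantifier reduces to single-variable $\exists^1 y$ or $\forall^1 y\in x_j$. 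The paper accordingly handles exactly those two cases. For $\forall^1 y\in x_j\,\psi$ the uniform realiser of a true instance is obtained by packaging, as a single parameter, the function $Y\mapsto\langle c_Y,P_Y\rangle$ that assigns to each $Y\in X_j$ a realiser of $\psi[Y/y,\bar X/\mathbf{x}]$ supplied by the induction hypothesis, and then looking up the input in that table; this is single-valued because the lookup is by set, not by code. Once you drop the reference to Lemma~\ref{Lemma:sequenceBound} and treat the single-variable bounded quantifier this way (your bounded search through the code of $z$ is then exactly right), your argument goes through.
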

\begin{proof}
The proof is an induction on the complexity of $\varphi(\mathbf{x})$. The cases where $\varphi$ is ``$X \in Y$'' or ``$X = Y$'' follow from \Cref{Lemma: equality and set-membership are trivial}. If $\varphi = \bot$, then the claim is trivial because $\bot$ is both never realised and false. The case for implication follows directly from the induction hypothesis and the definitions. 

If $\varphi(\mathbf{x})=\bigvee_{\beta < \gamma} \varphi_\beta$, assume that $\langle c, P \rangle$ uniformly realises $\bigvee_{\beta < \gamma} \varphi_\beta[\bar{X}/\mathbf{x}]$. Then $c$ is the code of a program that computes an ordinal $\beta < \gamma$ and a realiser of $\varphi_\beta[\bar{X}/\mathbf{x}]$. By the induction hypothesis, $\varphi_\beta[\bar{X}/\mathbf{x}]$ is true and, therefore, the disjunction  $\varphi$ is true as well. If $\bigvee_{\beta \in \gamma} \varphi_\beta[\bar{X}/\mathbf{x}]$ is true, then $\varphi_\beta[\bar{X}/\mathbf{x}]$ is true for some $\beta < \gamma$. By induction hypothesis, $\varphi_\beta[\bar{X}/\mathbf{x}]$ is realised by some potential realiser $\langle c',P'\rangle$. Let $P$ be a code for the pair $\langle \beta ,\langle c',P'\rangle\rangle$, and $c$ be a code for a program that given $0$ as input just returns $P$. Then $\langle c,P\rangle$ uniformly realises $\varphi[\bar{X}/\mathbf{x}]$.

If $\varphi(\mathbf{x})=\bigwedge_{\beta < \gamma} \varphi_\beta$, assume that $\langle c, P \rangle$ uniformly realises $\bigwedge_{\beta < \gamma} \varphi_\beta$. Then $c$ codes a program that, given an ordinal $\beta < \gamma$, computes a realiser of $\varphi_\beta[\bar{X}/\mathbf{x}]$. Hence, each formula $\varphi_\beta[\bar{X}/\mathbf{x}]$ is realised and, by induction hypothesis, true. In conclusion, the conjunction is also true. We prove the converse by contradiction. Suppose that $\bigwedge_{\beta < \gamma} \varphi_\beta[\bar{X}/\mathbf{x}]$ is true but not realised. This means that there is a $\beta < \gamma$ such that $\varphi_\beta[\bar{X}/\mathbf{x}]$ is not realised. By induction hypothesis, $\varphi_\beta[\bar{X}/\mathbf{x}]$ is false. A contradiction. 

% If $\bigwedge_{\beta < \gamma} \varphi_\beta[\bar{X}/\mathbf{x}]$ is true, then $\varphi_\beta[\bar{X}/\mathbf{x}]$ is true for all $\beta < \gamma$. By induction hypothesis, $\varphi_\beta[\bar{X}/\mathbf{x}]$ is realised by some pair $\langle c'_\beta,P'_\beta\rangle$. Let $P$ be a code for set of tuples $\langle \beta, \langle c'_\beta,P'_\beta \rangle\rangle$ for $\beta<\gamma$. Let $c$ be a code for a program that, given $\beta<\gamma$, looks in $P$ for the entry $\langle \beta, \langle c'_\beta,P'_\beta\rangle\rangle$ and just copies the entry on the output tape. Then $\langle c,P\rangle$ realises $\varphi$.
 
Assume that $\varphi(\mathbf{x})=\exists^1 y \psi$. If $\langle c,P\rangle$ uniformly realises $\varphi[\bar{X}/\mathbf{x}]$, then $c$ returns the code of a program with parameter that returns, on input $0$, a code for a set $Y$ and a realiser of $\psi[Y/y][\bar{X}/\mathbf{x}]$. By the induction hypothesis, $\psi[Y/y][\bar{X}/\mathbf{x}]$ holds and, therefore, $\varphi$ is true. On the other hand, if $\varphi[\bar{X}/\mathbf{x}]$ is true, then there is a set $Y$ such that $\psi[Y/y][\bar{X}/\mathbf{x}]$ is true and, by induction hypothesis, there is a realiser $\langle c',P'\rangle$ of $\psi[Y/y][\bar{X}/\mathbf{x}]$. Let $P$ encode the realiser $\langle a, \langle c',P'\rangle \rangle$, where $a$ is a code for $Y$, and let $c$ be the code of a program that copies the parameter to the output tape. It is not hard to see that $\langle c,P\rangle$ uniformly realises $\varphi[\bar{X}/\mathbf{x}]$ as desired. 

Finally, let $\varphi(\mathbf{x})=\forall^{1} y\in x_{j} \psi$ for some $j<\ell(\mathbf{x})$. Assume that $\langle c, P\rangle$ uniformly realises $\varphi[\bar{X}/\mathbf{x}]$. Then $c$ is a program that, given $P$ and a code for a set $Y$ as input, returns a code for a realiser of $Y \in X_{j} \rightarrow \psi[Y/y][\bar{X}/\mathbf{x}]$. So, by induction hypothesis for every $Y\in X_{j}$ we have $\psi[Y/\mathbf{y}][\bar{X}/\mathbf{x}]$ as desired. For the right-to-left direction, assume that $\varphi[\bar{X}/\mathbf{x}]$ is true. By induction hypothesis, for every $Y\in X_{j}$ there is a realiser $\langle c_{Y},P_{\bar{Y}}\rangle$ of $\psi[Y/\mathbf{y}][\bar{X}/\mathbf{x}]$. Let $f:X_{j}\rightarrow \mathbb{N}\times \Ord$ be the function that maps $Y$ to the realiser $\langle c_{Y},P_{Y}\rangle$. Let $P$ be a code for the function $f$ (as a list of codes for the pairs $\langle a,\langle  c_{Y},P_{Y}\rangle \rangle$ where $a$ is a code of $Y$), and let $c$ be the code of a program that, given the code of $Y\in X_{j}$, searches in $P$ the pair $\langle a_{Y},\langle  c_{Y},P_{Y}\rangle \rangle$ and copies $\langle  c_{Y},P_{Y}\rangle$ to the output tape. The pair $\langle  c,P\rangle$ is a uniform realiser of $\varphi$. Indeed, given a code for $Y\in X_{j}$ the program coded by $c$ with parameter $P$ returns a code for a realiser of $\psi[Y/y][\bar{X}/\mathbf{x}]$, moreover the output of the machine is independent from the specific coding of $\bar{X}$, so the realiser is uniform. 
\end{proof}

Lemma \ref{Lemma:Sigma1Fin} can be extended to $\Sigma^{\infty}_1$-formulas. To prove this, we need the following lemma. Recall that ``$\mathbf{x} \in y$'', where $\mathbf{x}$ is a context-variable and $y$ a set-variable, is an abbreviation as introduced in \Cref{Section: Infinitary Intuitionistic Logic}. The following lemma justifies this abbreviation semantically.

\begin{lemma}\label{Lemma:sequenceBound}
    Let $\mathbf{x}$ be a context of length $\mu$ and $y$ a variable. For every set $Y$ and for all $\bar{X}$ we have that $(\mathbf{x}\in y)[\bar{X}/\mathbf{x}][Y/y]$ is realised if and only if $\bar{X} \in Y$.
\end{lemma}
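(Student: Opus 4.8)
The formula abbreviated by ``$\mathbf{x}\in y$'' is, after the substitution, of the shape $(\exists f\,\exists d\,\exists^{\mu}\mathbf{z}\,\psi)[\bar X/\mathbf{x}][Y/y]$, where the matrix $\psi$ is a $\Delta^{\omega}_0$-formula: its only quantifier is the bounded one $\forall x(x\in d\to\cdots)$, the pieces ``$f$ is a function whose domain is the ordinal $d$'' and ``$f(z_j)=x_j$'' unfold to $\Delta_0$-formulas, and the set-sized connectives $\bigwedge_{j'<j<\mu}$ and $\bigvee_{j<\mu}$ are permitted inside $\Delta^{\omega}_0$. So the plan is to strip the three outermost existential quantifiers one at a time using the existential clause~(vii) of \Cref{Definition:OTM-Realisability}, and then to invoke \Cref{Lemma:Sigma1Fin} — whose inductive proof in fact yields the equivalence of truth and uniform realisability for \emph{all} $\Delta^{\omega}_0$-formulas, set-sized conjunctions and disjunctions included — on the $\Delta^{\omega}_0$-matrix that remains. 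Since ``$\mathbf{x}\in y$'' itself contains the infinitary quantifier $\exists^{\mu}\mathbf{z}$ it is only $\Sigma^{\infty}_1$, not $\Sigma^{\omega}_1$, so one cannot cite \Cref{Lemma:Sigma1Fin} wholesale; peeling the existentials by hand also keeps us clear of a circular appeal to the $\Sigma^{\infty}_1$-extension that this very lemma is meant to help establish. The residual content is then the purely semantic fact that a strictly $\in$-increasing surjection between two ordinals is the identity.

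For the right-to-left direction, assume $\bar X\in Y$. Since a sequence of length $\mu$ is literally a function with domain the ordinal $\mu$, take $f:=\bar X$, $d:=\mu$, and $\mathbf{z}:=\langle j\mid j<\mu\rangle$ as witnesses; with these substitutions $\psi$ becomes a true $\Delta^{\omega}_0$-sentence — the enumeration clause holds because every ordinal $<\mu$ is one of the $j$'s, the conjuncts $z_{j'}\in z_j$, $z_j\in d$, $f(z_j)=x_j$ hold by construction, and the last conjunct is exactly the hypothesis $\bar X\in Y$ — so by \Cref{Lemma:Sigma1Fin} it has a (uniform) realiser $s$. Using \Cref{Theorem:low-level} and \Cref{Lemma:BasicAlgo} we can compute codes for $\mu$, for $\langle j\mid j<\mu\rangle$ and for $\bar X$; storing these together with $s$ in a parameter $P$ and letting $c$ be the program that on three successive inputs $0$ returns, in turn, the pairs consisting of (a code of) $\bar X$, of $\mu$, of $\langle j\mid j<\mu\rangle$ and the appropriate sub-realisers (which merely peel the next datum off $P$), the pair $\langle c,P\rangle$ realises $(\mathbf{x}\in y)[\bar X/\mathbf{x}][Y/y]$.

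For the converse, assume $r\Vdash(\mathbf{x}\in y)[\bar X/\mathbf{x}][Y/y]$. Applying clause~(vii) of \Cref{Definition:OTM-Realisability} three times produces a set $F$, a set $D$, a sequence $\bar Z=\langle Z_j\mid j<\mu\rangle$ (read off from the first components of $r(0)$, $r_1(0)$, $r_2(0)$ respectively, where $r_1,r_2$ are the successive second components) and a realiser $s$ of $\psi[F/f][D/d][\bar Z/\mathbf{z}][\bar X/\mathbf{x}][Y/y]$. The latter is a $\Delta^{\omega}_0$-sentence, hence true by \Cref{Lemma:Sigma1Fin}; reading off its conjuncts, $F$ is a function whose domain is the ordinal $D$, the map $j\mapsto Z_j$ is strictly $\in$-increasing from $\mu$ into $D$ with $F(Z_j)=X_j$, every element of $D$ is some $Z_j$, and $F\in Y$. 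By the order-theoretic fact noted above, $Z_j=j$ for all $j<\mu$ and $D=\mu$, so $F$ is a function with domain $\mu$ and $F(j)=X_j$ for every $j$, i.e.\ $F=\langle X_j\mid j<\mu\rangle=\bar X$; since $F\in Y$, we conclude $\bar X\in Y$. The subtlest point in the whole argument is simply recognising that, once the witnesses are substituted, the matrix really does fall within the scope of \Cref{Lemma:Sigma1Fin}; the remainder is the (immediate) OTM-computability of the auxiliary codes and the bookkeeping of the three nested existential clauses, neither of which is a genuine obstacle.
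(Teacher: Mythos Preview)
Your proof is correct and follows essentially the same approach as the paper: strip the three outer existentials using clause~(vii) and apply \Cref{Lemma:Sigma1Fin} to the residual $\Delta^{\omega}_0$-matrix, choosing the obvious witnesses in the forward direction and reading them back in the converse. You are in fact more careful than the paper on two points---you take $\mathbf{z}=\langle j\mid j<\mu\rangle$ rather than the paper's (apparently mistaken) $[\bar X/\mathbf{z}]$, and you spell out the order-theoretic step showing $Z_j=j$, $D=\mu$, hence $F=\bar X$, which the paper leaves implicit---but the overall strategy is identical.
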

\begin{proof}
    For the right-to-left direction assume that  $\bar{X}\in Y$. Let $\mu = \ell(\mathbf{X}) = \ell(\bar{X})$. Since $\bar{X}$ is a function with domain $\ell(\bar{X})$, Lemma \ref{Lemma:Sigma1Fin} implies that the following sentence is realised by some $s$:
    \begin{multline*}
        (\text{\vir{$f$ is a function with domain the ordinal $d$}}  \land  ( \bigwedge_{j'<j<\mu}(z_{j'}\in z_{j} \land z_{j}\in d \land f(z_{j})=x_{j} ) \\
        \land  \forall x ( x\in d\rightarrow \bigvee_{j<\mu}z_{j}=x) ) \land f\in y ) [\bar{X}/f][\ell(\bar{X})/d][\bar{X}/\mathbf{z}]
    \end{multline*}
    Given a code of a sequence $\bar{X}$ an OTM can easily compute a code for $\ell(\bar{X})$ because $\ell(\bar{X})=\dom(\bar{X})$. Let $r$ be a program that returns $\langle \bar{X},r' \rangle$ on input $0$, where $r'$ is a program that returns $\langle \ell(\bar{X}), r'' \rangle$ on input $0$ with $r''$ a program such that $r''(0)=\langle \bar{X},s \rangle$. Then $r$ realises $(\mathbf{x} \in  y)[\bar{X}/\mathbf{x}][Y/y]$.
    
    For the left-to-right direction, assume that $(\mathbf{x}\in y)[\bar{X}/\mathbf{x}][Y/y]$ is realised by some program $r$. Then $r$ is such that $r(0)= \langle F,r' \rangle$, where $r'$ is a program such that $r'(0)=(D, r'')$, and $r''$ is a program such that $r''(0)= \langle \bar{Z},s \rangle$ and $s$ realises
    \begin{multline*}
        [\text{\vir{$f$ is a function with domain the ordinal $d$}}  \land 
    (\bigwedge_{j'<j<\mu}(z_{j'}\in z_{j} \land z_{j}\in d \land f(z_{j})=x_{j}) \\ \land \forall x (x\in d\rightarrow \bigvee_{j<\mu}z_{j}=x)) \land f\in y ][F/f][D/d][\bar{Z}/\mathbf{z}].
    \end{multline*}
    By Lemma \ref{Lemma:Sigma1Fin}, it follows that $F=\bar{X}$ and $F \in Y$. Therefore, $\bar{X} \in Y$ as desired. 
\end{proof}

\begin{lemma}\label{Lemma:Delta0}
    Let $\phi(\mathbf{x})$ be an infinitary $\Sigma^{\infty}_1$-formula and $\bar{X}$ a sequence of length $\ell(\mathbf{x})$. The formula $\phi[\bar{X}/\mathbf{x}]$ is uniformly realised if and only if $\phi[\bar{X}/\mathbf{x}]$ is true.
\end{lemma}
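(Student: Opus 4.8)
The plan is to prove this by induction on the complexity of $\phi$, extending the proof of \Cref{Lemma:Sigma1Fin}. For every formula constructor that already occurs in $\Sigma^{\omega}_1$-formulas the argument carries over with at most cosmetic changes: the atomic cases $X\in Y$ and $X=Y$ come from \Cref{Lemma: equality and set-membership are trivial}, $\bot$ is trivial, and the cases for implication and for the set-sized disjunctions $\bigvee_{\beta<\gamma}\varphi_\beta$ and conjunctions $\bigwedge_{\beta<\gamma}\varphi_\beta$ are literally those of \Cref{Lemma:Sigma1Fin} (which are already infinitary there). It is worth recording that in each of these cases the implication ``realised $\Rightarrow$ true'' uses nothing about uniformity, so the induction in fact shows that realisability, uniform realisability and truth coincide for the formulas in question; this lets us invoke the inductive hypothesis to pass from the mere existence of a realiser to truth. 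The cases that genuinely need something new are the leading unbounded infinitary existential $\exists^{\alpha}\mathbf{x}$ and the bounded infinitary quantifiers $\forall^{\alpha}\mathbf{x}\in y$ and $\exists^{\alpha}\mathbf{x}\in y$ inside the $\Delta^{\infty}_0$-matrix; all three are handled with the help of \Cref{Lemma:sequenceBound}, which is exactly what forces the abbreviation ``$\mathbf{x}\in y$'' to behave as sequence-membership.

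The leading $\exists^{\alpha}\mathbf{x}\,\psi$ is treated just like the case $\exists^{1}y\,\psi$ of \Cref{Lemma:Sigma1Fin}, only with a code of a sequence of length $\alpha$ in place of a single set code. A uniform realiser returns, on input $0$, a pair $(a,s)$ where $a$ codes a sequence $\bar Y$ of length $\alpha$ and $s$ realises $\psi$ with $\bar Y$ substituted for $\mathbf{x}$; by the inductive hypothesis that instance of $\psi$ is true, hence so is $\exists^{\alpha}\mathbf{x}\,\psi$. Conversely, given a witnessing sequence and a uniform realiser of the corresponding instance of $\psi$ from the inductive hypothesis, we store a code of the sequence together with that realiser in a set parameter and let the program output this pair on input $0$; since this program ignores its input it computes a single-valued function, so the realiser is uniform.

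For the bounded universal quantifier, recall that $\forall^{\alpha}\mathbf{x}\in y\,\psi$ abbreviates $\forall^{\alpha}\mathbf{x}(\mathbf{x}\in y\rightarrow\psi)$; the idea is to apply the inductive hypothesis to $\psi$ while controlling the antecedent by means of \Cref{Lemma:sequenceBound}, according to which $(\mathbf{x}\in y)[\bar Z/\mathbf{x}][Y/y]$ is realised precisely when $\bar Z\in Y$. Fix the substitution $[Y/y]$ (and the fixed values of the remaining free variables). If the formula is true, then for every sequence $\bar Z$ of length $\alpha$ with $\bar Z\in Y$ the corresponding instance of $\psi$ is true, hence uniformly realised by the inductive hypothesis; since all such $\bar Z$ lie in the set $Y$, we may collect a function sending each of them to a chosen uniform realiser, code it as a parameter $P$, and take as realiser of $\forall^{\alpha}\mathbf{x}(\mathbf{x}\in y\rightarrow\psi)$ the program that on input a code for $\bar Z$ outputs the program which --- ignoring the realiser of $\mathbf{x}\in y$ handed to it --- looks $\bar Z$ up in $P$ using the computable equality relation from \Cref{Lemma:BasicAlgo} and returns the associated realiser (if $\bar Z\notin Y$ then, by \Cref{Lemma:sequenceBound}, $\mathbf{x}\in y$ has no realiser and the output program is irrelevant). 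This realiser depends only on the set $\bar Z$, not on its code, so it is uniform. Conversely, if $\langle c,P\rangle$ is a uniform realiser and $\bar Z\in Y$ has length $\alpha$, then \Cref{Lemma:sequenceBound} provides some $s$ realising $(\mathbf{x}\in y)[\bar Z/\mathbf{x}][Y/y]$, so $c_P(a)(s)$ realises the instance of $\psi$ for $\bar Z$, which is therefore true by the inductive hypothesis; as $\bar Z$ was arbitrary, the bounded universal is true. The bounded existential $\exists^{\alpha}\mathbf{x}\in y\,\psi$, which abbreviates $\exists^{\alpha}\mathbf{x}(\mathbf{x}\in y\wedge\psi)$, is handled symmetrically, combining the packaging of the leading-$\exists$ case with \Cref{Lemma:sequenceBound} to realise the conjunct $\mathbf{x}\in y$.

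I expect the main obstacle to be ensuring uniformity in the bounded universal case: the realiser is given only a code for a sequence $\bar Z$ and must return a realiser of the instance of $\psi$ at $\bar Z$ without exploiting peculiarities of that code. This is resolved exactly as in the $\forall^{1}$-case of \Cref{Lemma:Sigma1Fin}, by pre-tabulating, inside a set parameter, a uniform realiser for every relevant $\bar Z$ and retrieving it via the computable equality relation of \Cref{Lemma:BasicAlgo}; the ingredient that makes this possible is \Cref{Lemma:sequenceBound}, which guarantees that the sequences to be accounted for all belong to $Y$ and hence form a set, so the lookup table exists.
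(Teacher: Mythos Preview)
Your proposal is correct and is essentially the approach the paper has in mind: the paper's proof consists of the single sentence ``With \Cref{Lemma:sequenceBound} it is easy to adapt the proof of \Cref{Lemma:Sigma1Fin} for the present case,'' and what you have written is a faithful unfolding of that adaptation, treating the infinitary bounded quantifiers via \Cref{Lemma:sequenceBound} exactly as intended. Your extra remark that the direction ``realised $\Rightarrow$ true'' needs no uniformity is true and harmless, though not used in the paper.
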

\begin{proof}
    With Lemma \ref{Lemma:sequenceBound} it is easy to adapt the proof of Lemma \ref{Lemma:Sigma1Fin} for the present case.
\end{proof}

\begin{theorem}\label{Theo:SatRelationDelta0}
    Let $\varphi(\mathbf{x})$ be an infinitary $\Delta^{\infty}_0$-formula in the language of set theory and $\bar{X}$ be a sequence of length $\ell(\mathbf{x})$. There is an OTM that, given codes for $\varphi$ and $\bar{X}$, returns $1$ if $\varphi[\bar{X}/\mathbf{x}]$ is true and $0$ otherwise.
\end{theorem}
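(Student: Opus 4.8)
The plan is to build the required OTM by transfinite recursion on the (well-founded, set-sized) subformula structure of $\varphi$, evaluating the Tarskian truth value of each subformula under the appropriate substitution. Since formulas of $\Lo^\in_{\infty,\infty}$ are generated by a set-sized well-founded construction, each carries an ordinal construction rank, and every proper subformula has strictly smaller rank; the recursion below decreases this rank at each step, so it is legitimate. Concretely, the machine uses an auxiliary tape as a stack --- as licensed by \Cref{Theorem:low-level} and \Cref{Convention:low-level} --- of pending evaluation tasks $\langle \psi, \bar Y\rangle$, where $\psi$ is a subformula of $\varphi$ and $\bar Y$ codes a sequence assigning sets to the (finitely or infinitely many) free variables of $\psi$; it processes tasks depth-first, pushing the subtasks determined by the principal connective or quantifier of $\psi$ and popping $\langle\psi,\bar Y\rangle$ once the truth values of all its subtasks are recorded. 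No parameter is needed, since the truth of a $\Delta^\infty_0$ sentence (with set constants) in $\Ve$ is determined by the input codes; formally, this is analogous to the well-founded recursions carried out in \Cref{Lemma:BasicAlgo}.

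For the atomic and propositional cases evaluation is immediate. The formula $\bot$ evaluates to $0$. For $X\in Y$ and $X=Y$ the relation is OTM-decidable by \Cref{Lemma:BasicAlgo}. For the abbreviation ``$\mathbf{x}\in y$'' --- which may occur inside $\varphi$ because the bounded quantifiers unfold to it --- unwinding the definition shows that ``$\mathbf{x}\in y$''$[\bar X/\mathbf{x}][Y/y]$ is true precisely when the sequence $\bar X$, regarded as a function with domain its length, is an element of $Y$ (in accordance with the realisability statement of \Cref{Lemma:sequenceBound}), and this is again decidable by \Cref{Lemma:BasicAlgo} and \Cref{Theorem:low-level}. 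For $\psi_0\to\psi_1$ the machine recursively evaluates $\psi_0$ and $\psi_1$ and outputs $1$ iff $\psi_0$ evaluates to $0$ or $\psi_1$ to $1$; for $\bigvee_{\alpha<\beta}\psi_\alpha$ and $\bigwedge_{\alpha<\beta}\psi_\alpha$ it runs a bounded search over $\beta$, recursively evaluating each $\psi_\alpha$, and outputs $1$ iff some, respectively every, disjunct evaluates to $1$.

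The substantive case is that of the bounded quantifiers. By the abbreviations, $\forall^\alpha\mathbf{x}\in y\,\psi$ stands for $\forall^\alpha\mathbf{x}(\mathbf{x}\in y\rightarrow\psi)$ and, by the truth condition for ``$\mathbf{x}\in y$'' just described, under the substitution in question (with $Y$ assigned to $y$) this is true exactly when $\psi[\bar X/\mathbf{x}]$ is true for every $\bar X\in Y$ that is a function with domain $\alpha$; dually, $\exists^\alpha\mathbf{x}\in y\,\psi$ is true exactly when some such $\bar X$ makes $\psi$ true. Because $Y$ is a \emph{set}, this is again a bounded search: the machine iterates over the codes of the elements of $Y$ (using the stack operations of \Cref{Theorem:low-level}), discards any element that is not a function with domain $\alpha$, and for each surviving element $e$ forms the substitution extending $\bar Y$ by assigning $e$ to the context $\mathbf{x}$ and recursively evaluates $\psi$; it then outputs $1$ iff all, respectively some, of these recursive calls return $1$. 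The finitary bounded quantifiers $\forall x\in y$ and $\exists x\in y$ are the special case $\alpha<\omega$ and are treated identically.

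It remains to note termination and correctness. Each subtask strictly lowers the construction rank of the formula component, so the induced call tree is well-founded; it is set-sized, since each node has only set-many children --- indexed by an ordinal for the infinitary connectives, or by a subset of $Y$ for the bounded quantifiers --- and every branch has length at most the rank of $\varphi$. Hence the depth-first traversal visits set-many nodes and the OTM halts after ordinally many steps with the truth value of $\varphi[\bar X/\mathbf{x}]$ on its output tape, correctness following by the same well-founded induction. The one point demanding care is exactly this: one must verify that treating the bounded quantifiers and the ``$\mathbf{x}\in y$'' gadget as \emph{primitive}, rather than unfolding the latter into its (unbounded) existential form, is harmless --- i.e.\ that, via the truth analysis of ``$\mathbf{x}\in y$'' underlying \Cref{Lemma:sequenceBound}, the bounded quantifier really ranges over the set $Y$, so that the whole recursion stays set-sized and within reach of an OTM.
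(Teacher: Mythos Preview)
Your proof is correct and follows essentially the same approach as the paper's: a recursion on the complexity of $\varphi$, handling atomic formulas via \Cref{Lemma:BasicAlgo}, infinitary connectives by bounded search over the index ordinal, and bounded quantifiers by searching through the elements (sequences) of the bounding set. The paper's proof is terser and simply says ``the program goes through the sequences $\bar Y$ in $X_j$'' without dwelling on the fact that the abbreviation ``$\mathbf{x}\in y$'' formally unfolds to a formula with an unbounded existential; you are more explicit in flagging and resolving this point, which is a genuine subtlety the paper leaves implicit.
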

\begin{proof}
    This result is a variation on a Lemma of Koepke \cite[Lemma 4]{Koepke} for infinitary formulas. The proof is an induction on the complexity of the formula $\varphi(\mathbf{x})$. We already provided the algorithms for atomic formulas in Lemma \ref{Lemma:BasicAlgo}.

    If $\varphi(\mathbf{x})=\lnot \psi(\mathbf{x})$, then the program computes the truth value of $\psi(\bar{X})$ and flip the result. 

    If $\varphi(\mathbf{x})=\bigvee_{\beta<\alpha} \psi_\beta$, then the program recursively computes the truth values $\psi_\beta$ for every $\beta < \alpha$. The program outputs $1$ if one of the recursive instances stops with $1$; it outputs $0$ if after the search is done none of the instances halted with $1$. A similar procedure %argument
    works for $\bigwedge$. 

    If $\varphi(\mathbf{x})=\exists \mathbf{y}\in x_{j} \psi$ for some $j<\ell(\mathbf{x})$ and a $\Delta^{\infty}_0$-formula $\psi$, then the program goes through the sequences $\bar{Y}$ in $X_{j}$ and for each of them recursively computes the truth value of $\psi[\bar{Y}/\mathbf{y}][\bar{X}/\mathbf{x}]$. If the program finds a $\bar{Y}$ on which the recursive call returns $1$, then it returns $1$, otherwise it returns $0$. A similar proof works for $\forall$. 
\end{proof}

\begin{lemma}[Universal Realisability Program]
    \label{Lemma:UniversalRealisability}
    % Let $\varphi(\mathbf{x})$ be an infinitary $\Delta^{\infty}_0$-formula in the language of set theory and let $\bar{X}$ be a sequence of length $\ell(\mathbf{x})$. 
    There is an OTM-program $\Phi$ that takes codes of a sequence $\bar{X}$ and an infinitary $\Delta^{\infty}_0$-formula $\phi$ as input and returns the code of a realiser of $\varphi[\bar{X}/\mathbf{x}]$ whenever $\varphi[\bar{X}/\mathbf{x}]$ is realised. The same holds if we substitute realisability with uniform realisability. Moreover, if $\Ve = \Le$ then the statement is true for infinitary $\Sigma^{\infty}_1$-formulas.
\end{lemma}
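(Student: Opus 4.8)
The plan is a recursion on (the code of) the formula $\varphi$, turning the realiser constructions in the proofs of Lemma~\ref{Lemma:Sigma1Fin} and Lemma~\ref{Lemma:Delta0} into a single uniform procedure. On input codes of $\varphi$ and $\bar{X}$, the program $\Phi$ first runs the decision procedure of Theorem~\ref{Theo:SatRelationDelta0}: if $\varphi[\bar{X}/\mathbf{x}]$ is false then it is not realised (Lemma~\ref{Lemma:Delta0}) and there is nothing to output, so $\Phi$ may halt with arbitrary output; otherwise $\Phi$ parses $\varphi$, branches on its outermost connective, and calls itself recursively on the (necessarily $\Delta^{\infty}_0$) immediate subformulas. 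For atomic $\varphi$ it returns the $\mathtt{Decode}$-based realiser from the right-to-left directions of Lemma~\ref{Lemma: equality and set-membership are trivial}. For $\varphi = \psi \to \chi$: since $\varphi[\bar{X}/\mathbf{x}]$ is true, either $\psi[\bar{X}/\mathbf{x}]$ is false, in which case it is unrealised and the program that ignores its input and diverges realises $\psi\to\chi$, or $\chi[\bar{X}/\mathbf{x}]$ is true and $\Phi$ recursively computes a realiser $s$ of it and returns the program sending every input to $s$ (this also covers negation $\psi\to\bot$). For $\bigvee_{\gamma<\beta}\psi_\gamma$, $\Phi$ uses Theorem~\ref{Theo:SatRelationDelta0} to find the least $\gamma$ with $\psi_\gamma[\bar{X}/\mathbf{x}]$ true, recursively computes a realiser $s$, and returns the program sending $0$ to $\langle\gamma,s\rangle$; for $\bigwedge_{\gamma<\beta}\psi_\gamma$, it returns the program that on input $\gamma$ recursively computes and returns a realiser of $\psi_\gamma[\bar{X}/\mathbf{x}]$, each of which exists since the conjunction, being true, has all conjuncts true.

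The two genuinely new cases are the bounded quantifiers, which we handle by unfolding their definitions as abbreviations. For $\varphi = \exists^{\alpha}\mathbf{y}\in x_j\,\psi$, that is $\exists^{\alpha}\mathbf{y}(\mathbf{y}\in x_j \land \psi)$: using the low-level coding $\Phi$ enumerates the length-$\alpha$ sequences $\bar{Y}$ occurring in $X_j$, tests each with Theorem~\ref{Theo:SatRelationDelta0} until it finds one with $\psi[\bar{Y}/\mathbf{y}][\bar{X}/\mathbf{x}]$ true, recursively computes a realiser of this formula and, using the effective content of Lemma~\ref{Lemma:sequenceBound}, a realiser of $\bar{Y}\in X_j$, assembles these into a realiser of the conjunction, and returns the program sending $0$ to the pair consisting of a code of $\bar{Y}$ and that realiser. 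For $\varphi = \forall^{\alpha}\mathbf{y}\in x_j\,\psi$, that is $\forall^{\alpha}\mathbf{y}(\mathbf{y}\in x_j\to\psi)$: $\Phi$ returns a code of the program that, given a code $a$ of a length-$\alpha$ sequence $\bar{Y}$, decides whether $\bar{Y}\in X_j$ (computable by Lemma~\ref{Lemma:BasicAlgo}); if not, $\mathbf{y}\in x_j$ is unrealised by Lemma~\ref{Lemma:sequenceBound}, so any program realises the implication; if so, $\psi[\bar{Y}/\mathbf{y}][\bar{X}/\mathbf{x}]$ is true because $\varphi[\bar{X}/\mathbf{x}]$ is, whence $\Phi$ recursively computes a realiser $s$ of it and returns the program sending every input to $s$.

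For the $\Ve = \Le$ addendum, let $\varphi = \exists^{\alpha}\mathbf{y}\,\psi$ with $\psi$ a $\Delta^{\infty}_0$-formula and suppose $\varphi[\bar{X}/\mathbf{x}]$ is realised, hence true by Lemma~\ref{Lemma:Delta0}; then some length-$\alpha$ sequence $\bar{Y}$ witnesses it, and under $\Ve = \Le$ the $<_{\Le}$-least such witness is a constructible set. Using Koepke's theorem that an OTM can enumerate $\Le$ together with its canonical well-order, $\Phi$ runs through codes of the constructible length-$\alpha$ sequences, testing each candidate $\bar{Y}$ with Theorem~\ref{Theo:SatRelationDelta0}; this search halts since a witness exists, after which $\Phi$ recursively computes a realiser $s$ of $\psi[\bar{Y}/\mathbf{y}][\bar{X}/\mathbf{x}]$ and returns the program sending $0$ to the pair of a code for $\bar{Y}$ and $s$. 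In all cases the output realiser is obtained from the recursively produced ones by code-insensitive manipulations exactly as in Lemma~\ref{Lemma:Sigma1Fin}, so it is uniform whenever we restrict attention to uniform realisers; this yields the clause about uniform realisability.

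I expect the main obstacle to be the bounded universal case: one must verify that the single returned program realises $\forall^{\alpha}\mathbf{y}(\mathbf{y}\in x_j\to\psi)$ simultaneously for every code of every length-$\alpha$ sequence, the key point being that for $\bar{Y}\notin X_j$ the hypothesis $\mathbf{y}\in x_j$ is not merely false but unrealised, which is precisely what Lemma~\ref{Lemma:sequenceBound}---and hence the choice of ``$\mathbf{x}\in y$'' over the naive $\bigwedge_{j<\alpha}y_j\in x_j$---delivers. A secondary point is to keep careful track, throughout the recursion, that truth and realisability coincide for every subformula encountered, so that all recursive calls indeed succeed.
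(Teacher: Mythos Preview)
Your proposal is correct and follows essentially the same strategy as the paper: a recursion on the structure of $\varphi$ that uses Theorem~\ref{Theo:SatRelationDelta0} to locate true disjuncts and bounded existential witnesses, recursive calls of $\Phi$ for the subformulas, and Koepke's enumeration of $\Le$ for the unbounded existential under $\Ve=\Le$. You are in fact more explicit than the paper in two places---you spell out the implication case (which the paper omits) and you carefully unfold the bounded-quantifier abbreviations, invoking Lemma~\ref{Lemma:sequenceBound} to justify that $\bar Y\notin X_j$ makes the hypothesis unrealised---but these are refinements of the same argument rather than a different route.
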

\begin{proof}
    We give an informal description of the program $\Phi$ that computes the realisers. The program first checks the main operator of the input-formula and then proceeds as follows:

    If $\varphi$ is ``$X\in Y$'' or ``$X=Y$'', then the program just returns the codes of the algorithms described in Lemma \ref{Lemma:BasicAlgo} to compute a realiser of $\varphi$. If $\varphi=\bot$ then the program can just return anything since $\bot$ is never realised. 

    If $\varphi=\bigvee_{\beta \in \gamma} \varphi_\beta$, then proceed as follows. By Theorem \ref{Theo:SatRelationDelta0}, the $\Delta^{\infty}_0$-satisfaction relation is computable by an OTM. For every sentence $\varphi_\beta[\bar{X}/\mathbf{x}]$ the program $\Phi$ checks whether the sentence is true or not. When it finds a $\beta$ such that $\varphi_\beta[\bar{X}/\mathbf{x}]$ holds, $\Phi$ stops the search and returns $\langle c,P \rangle$ where $P$ is a code of $\bar{X}$, and $c$ is a code of a program that returns $\beta$ and the output of a recursive call of $\Phi$ on $\varphi_\beta$ and $\bar{X}$.  

    If $\varphi=\bigwedge_{\beta \in \gamma} \varphi_\beta$, then the program returns the pair $\langle c,P\rangle$ where $c$ is a code for $\Phi$ and $P$ is a code for $\bar{X}$. 

    If $\varphi(\mathbf{x}) =\exists \mathbf{y} \in x_{j} \psi$ for some $j < \ell(\mathbf{x})$ and some $\Delta^{\infty}_0$-formula $\psi$, then the program returns $\langle c,P \rangle$, where $P$ is a code of $\bar{X}$ and $c$ is a code of a program that does the following: for every sequence $\bar{Y}\in X$ of length $\mathbf{y}$, it checks whether $\psi[\bar{Y}/\mathbf{y}][\bar{X}/\mathbf{x}]$ holds. As soon as such a $\bar{Y}$ is found, the program recursively calls $\Phi$ on $\psi[\bar{Y}/\mathbf{y}][\bar{X}/\mathbf{x}]$ and outputs a code for $\bar{Y}$ and the result of the recursive call of $\Phi$. 

    If $\varphi(\mathbf{x})=\forall \mathbf{y}\in x_{j} \psi$, then the program returns $\langle c,P \rangle$ where $P$ is a code of $\bar{X}$ and $c$ is a code for a program that takes a sequence $\bar{Y}$ as input and $P$ as parameter and runs $\Phi$ on $\psi[\bar{Y}/\mathbf{y}][\bar{X}/\mathbf{x}]$.

    The correctness of the algorithm follows by an easy induction on $\varphi$ using Lemma \ref{Lemma:Delta0}. 

    Finally, if $\Ve = \Le$, then we can treat unbounded existential quantification as follows: suppose that $\varphi(\mathbf{x})=\exists \mathbf{y} \psi$ for some $\Delta^{\infty}_0$-formula $\psi$. By a result of Koepke (implicit in \cite{Koepke} and explained in detail in \cite[Theorem 3.2.13 \& Lemma 2.5.45]{Carl2019}), $\Le$ is computably enumerable by an OTM. So the program starts an unbounded search in $\Le$ for a witness $\bar{Y}$ of $\psi[\bar{Y}/\mathbf{y}][\bar{X}/\mathbf{x}]$. If it finds one, it recursively calls itself on $\psi[\bar{Y}/\mathbf{y}][\bar{X}/\mathbf{x}]$ and outputs a code for $\bar{Y}$ and the result of the recursive call. 
\end{proof}

\section{Soundness of OTM-Realisability}
\label{Section: Soundness}

\subsection{Logic}

In this section, we will show that our notion of realisability is sound with respect to the infinitary sequent calculus that was introduced by Espíndola \cite{Espindola2018}. In fact, Espíndola defines a calculus for the language $\Lo_{\kappa^+,\kappa}$ for every cardinal $\kappa$. As we admit formulas of all ordinal lengths, we will show that OTM-realisability is sound with respect to these systems for every $\kappa$. A \textit{sequent} $\Gamma \vdash_\mathbf{x} \Delta$ is an ordered pair of sets of formulas in the infinitary language of set theory $\Lo^\in_{\infty, \infty}$, where $\mathbf{x}$ is a common context for all formulas in $\Gamma \cup \Delta$. The rules checked in Propositions 28--37 are given exactly as in Espíndola \cite{Espindola2018}, Def. 1.1.1.

\begin{definition}
    Let $\Gamma \cup \Delta$ be a set of formulas in the infinitary language of set theory $\Lo^\in_{\infty, \infty}$. A sequent $\Gamma \vdash_\mathbf{x} \Delta$ is \emph{realised} if the universal closure of $\bigwedge \Gamma \rightarrow \bigvee \Delta$ is realised. If $A$ and $B$ are sets of sequents, then the rule $\frac{A}{B}$ is \emph{realised} if whenever all sequents in $A$ are realised, then there is some sequent in $B$ that is realised.
\end{definition}

Note that a formula $\phi$ is realised if and only if the sequent $\emptyset \vdash \phi$ is realised. To simplify notation, we will write $\efrac{A}{B}$ to denote the conjunction of both rules $\frac{A}{B}$ and $\frac{B}{A}$. %Accordingly, we say that $\efrac{A}{B}$ is valid if both $\frac{A}{B}$ and $\frac{B}{A}$ are valid.

In many of the following soundness proofs, we will need to show that certain sequents $\Gamma \vdash_\mathbf{x} \Delta$ are realised, i.e., we have to find a realiser of the corresponding formula $\forall \mathbf{x} (\bigwedge \Gamma \rightarrow \bigvee \Delta)$. In many cases these realisers will be independent of $\mathbf{x}$. In such cases, we will, for the sake of simplicity, directly describe a realiser $r$ of $\bigwedge \Gamma \rightarrow \bigvee \Delta$, when we really mean a realiser that, given any $\bar{X}$ of length $\ell(\mathbf{x})$, outputs the realiser $r$.

\begin{proposition}
    \label{Proposition: Structural Rules are realised}
    The following structural rules are realised:
    \begin{enumerate}
        \item Identity axiom (Espíndola, \cite{Espindola2018}, Def. 1.1.1, 1(a)):
            \begin{mathpar}
                \phi \vdash_{\mathbf{x}} \phi 
            \end{mathpar}
        \item Substitution rule (Espíndola, \cite{Espindola2018}, Def. 1.1.1, 1(b)):
            \begin{mathpar}
                \inferrule{\phi \vdash_{\mathbf{x}} \psi}{\phi[\mathbf{s}/\mathbf{x}] \vdash_{\mathbf{y}} \psi[\mathbf{s}/\mathbf{x}]} 
            \end{mathpar}
            where $\mathbf{y}$ is a string of variables including all variables occurring in the string of terms $\mathbf{s}$.
        \item Cut rule (Espíndola, \cite{Espindola2018}, Def. 1.1.1, 1(c)):
            \begin{mathpar}
                \inferrule{\phi \vdash_{\mathbf{x}} \psi \\ \psi \vdash_{\mathbf{x}} \theta}{\phi \vdash_{\mathbf{x}} \theta} 
            \end{mathpar}
    \end{enumerate}
\end{proposition}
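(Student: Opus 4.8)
The plan is to handle all three rules by the usual ``proofs-as-programs'' idea: the identity axiom is witnessed by the identity program, the cut rule by composition of programs, and the substitution rule by a reindexing of the argument sequence. Throughout, following the convention stated just before the proposition, I would describe a realiser of $\bigwedge\Gamma\to\bigvee\Delta$ (or, when the realiser genuinely uses the context, of its universal closure), and suppress the routine conversions between a realiser of a formula $\phi$ and a realiser of the singleton conjunction $\bigwedge\{\phi\}$ or disjunction $\bigvee\{\phi\}$, which are performed by an evident OTM-program.

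For the \emph{identity axiom} $\phi\vdash_{\mathbf{x}}\phi$, the realiser is the program that, on any code $a$ of a length-$\ell(\mathbf{x})$ sequence $\bar{X}$, returns (a code for) the program $\mathtt{id}$ copying its input tape to its output tape. By clause~(4) of \Cref{Definition:OTM-Realisability}, for any $s\Vdash\phi[\bar{X}/\mathbf{x}]$ we have $\mathtt{id}(s)=s\Vdash\phi[\bar{X}/\mathbf{x}]$, so $\mathtt{id}\Vdash\phi[\bar{X}/\mathbf{x}]\to\phi[\bar{X}/\mathbf{x}]$, as required. For the \emph{cut rule}, suppose $r_1$ realises $\phi\vdash_{\mathbf{x}}\psi$ and $r_2$ realises $\psi\vdash_{\mathbf{x}}\theta$, so that $r_1(a)\Vdash(\phi\to\psi)[\bar{X}/\mathbf{x}]$ and $r_2(a)\Vdash(\psi\to\theta)[\bar{X}/\mathbf{x}]$ for every code $a$ of a length-$\ell(\mathbf{x})$ sequence $\bar{X}$. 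The realiser of $\phi\vdash_{\mathbf{x}}\theta$ takes such a code $a$ and returns (a code for) the program $s\mapsto r_2(a)\bigl(r_1(a)(s)\bigr)$; forming this composite is OTM-computable by simulating, via a universal OTM, the programs $r_1$ and $r_2$ (whose codes and parameters are placed on the parameter tape), and for $s\Vdash\phi[\bar{X}/\mathbf{x}]$ one gets $r_1(a)(s)\Vdash\psi[\bar{X}/\mathbf{x}]$ and hence $r_2(a)(r_1(a)(s))\Vdash\theta[\bar{X}/\mathbf{x}]$ by unwinding clause~(4).

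For the \emph{substitution rule}, note that a substitution of the string of terms $\mathbf{s}=\langle s_i\mid i<\ell(\mathbf{x})\rangle$ for $\mathbf{x}$ is here just a reindexing (plus, if constants are allowed as terms, fixing some positions to particular sets): pick a function $h\colon\ell(\mathbf{x})\to\ell(\mathbf{y})$ with $s_i=y_{h(i)}$, recording any constant positions in the parameter. For a sequence $\bar{Y}$ of length $\ell(\mathbf{y})$ put $\bar{Y}\circ h=\langle Y_{h(i)}\mid i<\ell(\mathbf{x})\rangle$; then $\phi[\mathbf{s}/\mathbf{x}][\bar{Y}/\mathbf{y}]=\phi[(\bar{Y}\circ h)/\mathbf{x}]$ and likewise for $\psi$. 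Given a realiser $r$ of $\phi\vdash_{\mathbf{x}}\psi$, the realiser of $\phi[\mathbf{s}/\mathbf{x}]\vdash_{\mathbf{y}}\psi[\mathbf{s}/\mathbf{x}]$ takes a code $a$ of $\bar{Y}$, computes a code $a'$ of $\bar{Y}\circ h$ — OTM-computable using the sequence/stack operations of \Cref{Theorem:low-level} together with $h$ on the parameter tape — and returns $r(a')\Vdash\bigl(\phi\to\psi\bigr)[(\bar{Y}\circ h)/\mathbf{x}]=\bigl(\phi[\mathbf{s}/\mathbf{x}]\to\psi[\mathbf{s}/\mathbf{x}]\bigr)[\bar{Y}/\mathbf{y}]$, as desired.

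I do not expect any genuine obstacle here. The only points requiring (minor) care are the bookkeeping conversions between a formula $\phi$ and the singletons $\bigwedge\{\phi\}$/$\bigvee\{\phi\}$, the observation that ``compose two programs-with-parameters'' and ``reindex a coded sequence'' are OTM-computable (immediate from the existence of a universal OTM and \Cref{Theorem:low-level}), and the remark that the constructed realisers are single-valued in their inputs, so the proposition also holds for uniform realisability.
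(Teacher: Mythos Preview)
Your proposal is correct and follows essentially the same approach as the paper: the identity axiom is realised by the identity program, cut by composing the two given realisers, and substitution by reordering the input sequence according to the substitution and then applying the given realiser. Your version is somewhat more explicit about the reindexing function $h$ and the uniformity remark, but the underlying ideas are identical.
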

\begin{proof}
    The identity axiom is trivially realised by an OTM that implements the identity map.
    
    For the substitution rule, recall that by our definition, a realiser of $\phi \vdash_{\mathbf{x}} \psi$ is in fact a realiser $r \Vdash \forall \mathbf{x} (\phi \rightarrow \psi)$. We need to find a realiser $t \Vdash \forall \mathbf{y} (\phi[\mathbf{s}/\mathbf{x}] \rightarrow \psi[\mathbf{s}/\mathbf{x}])$, i.e., $t$ takes as input some code for a sequence $\mathbf{y}$ of variables. To achieve this, find codes for the realiser $r$ and the substitution $\mathbf{s}/\mathbf{x}$. Then let $t$ be the OTM with parameters $r$ and $\mathbf{s}/\mathbf{x}$ that performs the following two steps: first, reorder the input $\mathbf{y}$ according to the substitution $\mathbf{s}/\mathbf{x}$, and then apply the parameter $r$ to compute a realiser of $\phi[\mathbf{s}/\mathbf{x}] \rightarrow \psi[\mathbf{s}/\mathbf{x}]$.
        
    For the cut rule, let $r \Vdash \forall \mathbf{x} (\phi \rightarrow \psi)$ and $s \Vdash \forall \mathbf{x} (\phi \rightarrow \theta)$. For any given input $\bar{X}$, we have that $r(\bar{X}) \Vdash \phi(\bar{X}) \rightarrow \psi(\bar{X})$ and $s(\bar{X}) \Vdash \psi(\bar{X}) \rightarrow \theta(\bar{X})$, i.e., $r(\bar{X})$ maps realisers of $\phi(\bar{X})$ to realisers of $\psi(\bar{X})$, and $s(\bar{X})$ maps realisers of $\psi(\bar{X})$ to realisers of $\theta(\bar{X})$. Let $t$ be the OTM that, given input $\bar{X}$, returns an OTM that given input $u$ returns $s(\bar{X})(r(\bar{X})(u))$. Then $t \Vdash \forall \mathbf{x} (\phi(\mathbf{x}) \rightarrow \psi(\mathbf{x}))$.
\end{proof}

\begin{proposition}
    \label{Proposition: Equality Rules}
    The following rules for equality are realised:
    \begin{enumerate}
    \item $\top \vdash_{x} x=x$ (Espíndola, \cite{Espindola2018}, Def. 1.1.1, 2(a))
    \item $(\mathbf{x}=\mathbf{y}) \wedge \phi[\mathbf{x}/\mathbf{z}] \vdash_{\mathbf{z}} \phi[\mathbf{y}/\mathbf{z}]$
        where $\mathbf{x}$, $\mathbf{y}$ are contexts of the same length and type and $\mathbf{z}$ is any context containing $\mathbf{x}$, $\mathbf{y}$ and the free variables of $\phi$. (Espíndola, \cite{Espindola2018}, Def. 1.1.1, 2(b))
    \end{enumerate}
\end{proposition}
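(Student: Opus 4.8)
The plan is to write down explicit OTM-programs realising the two sequents. Both will be nearly immediate once \Cref{Lemma: equality and set-membership are trivial} is in hand; the only real content sits in rule~(ii), where the key point is that, on any substitution instance, a realiser of the premise $\mathbf{x}=\mathbf{y}$ witnesses that the corresponding sequences of sets are genuinely equal, so that the two substitution instances of $\phi$ collapse to one and the same formula. A realiser of the $\phi$-conjunct of the premise will then already realise the conclusion, and the realiser I seek is essentially a projection.

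For rule~(i) it suffices to realise $\forall x\,(\top \to x=x)$. I will use that part~\ref{Lemma:RealEq1} of \Cref{Lemma: equality and set-membership are trivial} --- or rather the construction in its proof --- provides a fixed potential realiser $e$, with empty parameter, such that $e \Vdash X=X$ for every set $X$; indeed the program underlying $e$ only inspects the codes it is given, never $X$ itself. The required realiser is then the program that, on a code for a set $X$, returns the program which discards its input (a realiser of $\top$) and outputs $e$.

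For rule~(ii) I must realise $\forall \mathbf{z}\,\big((\mathbf{x}=\mathbf{y}) \wedge \phi[\mathbf{x}/\mathbf{w}] \to \phi[\mathbf{y}/\mathbf{w}]\big)$, where $\mathbf{x}=\mathbf{y}$ abbreviates $\bigwedge_{i<\ell(\mathbf{x})} x_i=y_i$ and, by hypothesis, $\mathbf{z}$ includes $\mathbf{x}$, $\mathbf{y}$ and all free variables of $\phi$. Given a code for a sequence $\bar{Z}$ of length $\ell(\mathbf{z})$, write $\bar{X}$, $\bar{Y}$ for the sub-sequences of $\bar{Z}$ sitting at the positions of $\mathbf{x}$, $\mathbf{y}$. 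The realiser, on input a code of $\bar{Z}$, will output the program that, given a realiser $s$ of $(\mathbf{x}=\mathbf{y})[\bar{Z}/\mathbf{z}] \wedge \phi[\mathbf{x}/\mathbf{w}][\bar{Z}/\mathbf{z}]$, computes and returns $s(1)$. To see this is correct, note $s(0) \Vdash (\mathbf{x}=\mathbf{y})[\bar{Z}/\mathbf{z}]$, hence $s(0)(i) \Vdash X_i = Y_i$ for every $i<\ell(\mathbf{x})$; the easy direction of part~\ref{Lemma:RealEq1} of \Cref{Lemma: equality and set-membership are trivial} (which passes through \Cref{Lemma:IsoCodEq} and uses no uniformity) then yields $X_i=Y_i$, so $\bar{X}=\bar{Y}$. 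As equal sets carry the same constant symbol, the formulas $\phi[\mathbf{x}/\mathbf{w}][\bar{Z}/\mathbf{z}]$ and $\phi[\mathbf{y}/\mathbf{w}][\bar{Z}/\mathbf{z}]$ are literally identical, so $s(1)$, which realises the first, realises the second as well.

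I do not expect a genuine obstacle. The two spots deserving a careful line are: verifying that the composite substitution $\phi[\mathbf{x}/\mathbf{w}][\bar{Z}/\mathbf{z}]$ really is $\phi$ with $\mathbf{w}$ replaced by $\bar{X}$ and the remaining free variables of $\phi$ replaced by their $\bar{Z}$-values (making the comparison with $\phi[\mathbf{y}/\mathbf{w}][\bar{Z}/\mathbf{z}]$ legitimate), and noting that only the trivial half of \Cref{Lemma: equality and set-membership are trivial} is used, since the premise's realiser $s$ is not assumed uniform. Every program mentioned is plainly OTM-computable from the primitives of \Cref{Lemma:BasicAlgo} and the clauses of \Cref{Definition:OTM-Realisability}.
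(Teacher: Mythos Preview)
Your proposal is correct and follows essentially the same approach as the paper. For (i) both you and the paper invoke the uniform equality realiser from \Cref{Lemma: equality and set-membership are trivial}; for (ii) the paper gives only a one-line hint (``realisers work on all codes of any given set''), and your argument---that a realiser of $\mathbf{x}=\mathbf{y}$ forces $\bar X=\bar Y$, whence the two substitution instances of $\phi$ are literally the same formula and $s(1)$ already realises the conclusion---is precisely the unpacking of that hint, made explicit and with the right level of care.
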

\begin{proof}
    Finding a realiser of the first statement means finding a realiser of $\forall x (\top \rightarrow x = x)$, which is equivalent to finding a realiser of $\forall x (x = x)$. This follows directly from the fact that the algorithm for equality presented in the proof of \Cref{Lemma: equality and set-membership are trivial} is the same for any sets $X$ and $Y$.
    
    The second statement follows in a similar way as the substitution rule of \Cref{Proposition: Structural Rules are realised} using that, by the definition of realisability, realisers work on all codes of any given set.
\end{proof}

\begin{proposition}
    \label{Proposition: Conjunction Rules}
    Let $\kappa$ be a cardinal. The following conjunction rules (Espíndola, \cite{Espindola2018}, Def. 1.1.1, 3) are realised: 
    \begin{enumerate}
        \item $$\bigwedge_{i<\kappa} \phi_i \vdash_{\mathbf{x}} \phi_j,$$
        \item
            \begin{mathpar}
                \inferrule{\{\phi \vdash_{\mathbf{x}} \psi_i\}_{i<\kappa}}{\phi \vdash_{\mathbf{x}} \bigwedge_{i<\kappa} \psi_i}.
            \end{mathpar}
    \end{enumerate}
\end{proposition}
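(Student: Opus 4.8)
The plan is to prove the two rules separately, in each case directly unwinding Definition \ref{Definition:OTM-Realisability} and using the stack operations supplied by Theorem \ref{Theorem:low-level}. Neither argument needs any real computation; the only point requiring care is the packaging of a $\kappa$-indexed family of realisers into a single set parameter.

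For the first rule the realiser will be independent of the context $\mathbf{x}$, so, following the convention stated just above, it suffices to describe a realiser of $\bigwedge_{i<\kappa}\phi_i \rightarrow \phi_j$. I would fix a code for the ordinal $j$ as a parameter and let $r$ be the program that, on input $s$, returns $s(j)$. If $s \Vdash \bigwedge_{i<\kappa}\phi_i[\bar X/\mathbf{x}]$ for some $\bar X$ of length $\ell(\mathbf{x})$, then by clause (6) of Definition \ref{Definition:OTM-Realisability} we have $s(i) \Vdash \phi_i[\bar X/\mathbf{x}]$ for every $i<\kappa$, and in particular $r(s)=s(j) \Vdash \phi_j[\bar X/\mathbf{x}]$; by clause (4) this gives a realiser of the rule.

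For the second rule, suppose each sequent $\phi \vdash_{\mathbf{x}}\psi_i$ is realised, and (using choice in the metatheory) pick a realiser $r_i \Vdash \forall\mathbf{x}(\phi \rightarrow \psi_i)$ for each $i<\kappa$. Each $r_i$ is a pair $\langle c_i,P_i\rangle$ with $c_i\in\omega$ a program code and $P_i$ a set of ordinals, so the $\kappa$-sequence $\langle r_i \mid i<\kappa\rangle$ can be encoded, via the low-level coding of Theorem \ref{Theorem:low-level}, as a single set of ordinals $P$ from which $r_i$ is recoverable by an OTM on input $i$. I would then define the realiser $t$ of $\forall\mathbf{x}(\phi \rightarrow \bigwedge_{i<\kappa}\psi_i)$ by: on input $\bar X$, output a program $t_1$ carrying $P$ and $\bar X$ as parameters; $t_1$ on input $s$ outputs a program $t_2$ carrying $P$, $\bar X$, $s$ as parameters; and $t_2$ on input $i<\kappa$ extracts $r_i=\langle c_i,P_i\rangle$ from $P$ and returns $r_i(\bar X)(s)$.

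The verification is a routine unwinding of the definition: for any $\bar X$ of length $\ell(\mathbf{x})$, any $s \Vdash \phi[\bar X/\mathbf{x}]$, and any $i<\kappa$, clause (8) gives $r_i(\bar X) \Vdash \phi[\bar X/\mathbf{x}]\rightarrow\psi_i[\bar X/\mathbf{x}]$, hence clause (4) gives $t_2(i)=r_i(\bar X)(s) \Vdash \psi_i[\bar X/\mathbf{x}]$; so $t_2 \Vdash \bigwedge_{i<\kappa}\psi_i[\bar X/\mathbf{x}]$ by clause (6), $t_1 \Vdash \phi[\bar X/\mathbf{x}]\rightarrow\bigwedge_{i<\kappa}\psi_i[\bar X/\mathbf{x}]$ by clause (4), and $t \Vdash \forall\mathbf{x}(\phi\rightarrow\bigwedge_{i<\kappa}\psi_i)$ by clause (8). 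I expect the only mildly delicate step to be the claim that a $\kappa$-sequence of realisers can be stored in one set parameter and read back by an OTM, but this is precisely the kind of bookkeeping that Theorem \ref{Theorem:low-level} (treating a tape as a stack of pairs of ordinals and sets of ordinals) is designed to handle, so it should cause no trouble.
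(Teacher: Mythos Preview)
Your proposal is correct and follows essentially the same approach as the paper: for the first rule you extract $s(j)$ from a realiser $s$ of the conjunction, and for the second you bundle the family $\langle r_i\mid i<\kappa\rangle$ into a single parameter and read back $r_i$ on input $i$. Your write-up is in fact more careful than the paper's, which invokes the standing convention to suppress the outer $\forall\mathbf{x}$ and merely sketches the parameter-packing; your explicit currying through $t$, $t_1$, $t_2$ spells out exactly what the paper leaves implicit.
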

\begin{proof}
    The definition of realisability straightforwardly implies that both rules are realised: for the first rule observe that we can just extract a realiser of $\phi_j$ from a realiser of $\bigwedge_{i < \kappa} \phi_i$. For the second rule, combine the realisers $r_i \Vdash \phi \rightarrow \psi_i$ for $i < \kappa$ into a parameter $P$ and obtain a realiser of $\phi \vdash_\mathbf{x} \bigwedge_{i < \kappa} \phi_i$ by implementing an OTM program that returns the realiser of $\phi_i$ on input $i$.
\end{proof}

\begin{proposition}
    \label{Proposition: Disjunction Rule}
    Let $\kappa$ be a cardinal. The following disjunction rules (Espíndola, \cite{Espindola2018}, Def. 1.1.1, 4) are realised:
    \begin{enumerate}
        \item $$\phi_j \vdash_{\mathbf{x}} \bigvee_{i<\kappa} \phi_i$$
        \item
            \begin{mathpar}
                \inferrule{\{\phi_i \vdash_{\mathbf{x}} \theta\}_{i<\kappa}}{\bigvee_{i<\kappa} \phi_i \vdash_{\mathbf{x}} \theta}
            \end{mathpar}
        \end{enumerate}
\end{proposition}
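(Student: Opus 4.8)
The plan is to unwind the realisability clauses for $\bigvee$ exactly as in the previous two propositions. For the first rule, $\phi_j \vdash_{\mathbf{x}} \bigvee_{i<\kappa} \phi_i$, I would describe a realiser of $\forall \mathbf{x}(\phi_j \rightarrow \bigvee_{i<\kappa} \phi_i)$ that is independent of $\mathbf{x}$: on input (a code of) $\bar X$ of length $\ell(\mathbf{x})$, return the OTM that, given a realiser $s \Vdash \phi_j[\bar X/\mathbf{x}]$, outputs on input $0$ the pair $\langle j, s\rangle$. By clause (v) of \Cref{Definition:OTM-Realisability} this is exactly a realiser of $\bigvee_{i<\kappa}\phi_i[\bar X/\mathbf{x}]$, since $s \Vdash \phi_j[\bar X/\mathbf{x}]$ and $j < \kappa$. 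The index $j$ is a fixed ordinal, so it can be hard-coded (or passed as a parameter) and the whole construction is plainly OTM-computable.

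For the second rule, I would argue semantically that $\efrac{\{\phi_i \vdash_{\mathbf{x}} \theta\}_{i<\kappa}}{\bigvee_{i<\kappa}\phi_i \vdash_{\mathbf{x}} \theta}$ is realised, i.e.\ that whenever each premise is realised there is a realiser of the conclusion. Suppose $r_i \Vdash \forall\mathbf{x}(\phi_i \rightarrow \theta)$ for every $i<\kappa$. Combine the sequence $\langle r_i \mid i<\kappa\rangle$ (together with their parameters) into a single set of ordinals $P$, using the coding of sequences from \Cref{Theorem:low-level}. Now let $t$ be the OTM with parameter $P$ that, on input a code for $\bar X$, returns the OTM which, given a realiser $u \Vdash (\bigvee_{i<\kappa}\phi_i)[\bar X/\mathbf{x}]$, first computes $u(0) = \langle \gamma, s\rangle$ with $s \Vdash \phi_\gamma[\bar X/\mathbf{x}]$ (this is the content of clause (v)), then looks up $r_\gamma$ in $P$, feeds it $\bar X$, and applies the result to $s$, yielding $r_\gamma(\bar X)(s) \Vdash \theta[\bar X/\mathbf{x}]$. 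Thus $t \Vdash \forall\mathbf{x}((\bigvee_{i<\kappa}\phi_i) \rightarrow \theta)$, as required.

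There is essentially no hard step here; the only point that needs a word of care is that the lookup ``find $r_\gamma$ in $P$ given the ordinal $\gamma$'' must be OTM-computable, which is immediate from item (iii) of \Cref{Theorem:low-level} (retrieving the $\gamma$th element of a coded sequence). I would also note in passing that nothing in this argument requires the realisers to be uniform, so the proposition holds for OTM-realisability in the sense of \Cref{Definition:OTM-Realisability}; if one wants the uniform version, one observes that $t$'s output does not inspect specific features of the codes of $\bar X$ beyond what the $r_i$ already do, so uniformity of the premises transfers to the conclusion.
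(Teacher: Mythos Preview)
Your proposal is correct and follows essentially the same approach as the paper's proof: for (i) you wrap a given realiser $s$ into the pair $\langle j,s\rangle$, and for (ii) you pack the premise realisers into a parameter, read off $\langle\gamma,s\rangle$ from the disjunction realiser, and apply $r_\gamma$. The only difference is that you are more explicit about the outer $\forall\mathbf{x}$ layer and the computability of the lookup, whereas the paper invokes its standing convention of suppressing the $\bar X$-argument when the construction is independent of it.
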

\begin{proof}
    For the first statement, we need to realise the implication $\phi_j \rightarrow \bigvee_{i < \kappa} \phi_i$. This can be done by an OTM that, given a realiser $r_j$ of $\phi_j$, returns an OTM that returns a tuple $\langle j, r_j \rangle$ on input $0$.
    
    For the second statement, code the realisers $r_j$ for $\phi_i \vdash_\mathbf{x} \theta$, $i < \kappa$, into a parameter $P$. Then, $\bigvee_{i < \kappa} \phi_i \vdash_\mathbf{x} \theta$ is realised by the following algorithm implemented by an OTM: given a realiser $s \Vdash \bigvee_{i < \kappa} \phi_i$, compute $s(0) = \langle i, t \rangle$, such that $t \Vdash \phi_i$ and then return $r_i(t)$ by using the parameter.
\end{proof}

\begin{proposition}
    \label{Proposition: Implication Rule}
    The following implication rule (Espíndola, \cite{Espindola2018}, Def. 1.1.1, 5) is realised:
    \begin{mathpar}
        \efrac{\phi \wedge \psi \vdash_{\mathbf{x}} \eta}{\phi \vdash_{\mathbf{x}} \psi \to \eta}
    \end{mathpar}
\end{proposition}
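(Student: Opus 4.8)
The plan is to prove the two directions of the stated rule separately, in each case by exhibiting an explicit OTM that turns a realiser of the premise sequent into a realiser of the conclusion sequent; this is just the realisability version of currying and uncurrying. As noted before \Cref{Proposition: Structural Rules are realised}, the transformations will be uniform in $\mathbf{x}$, so it is enough to convert a realiser of $\phi \wedge \psi \to \eta$ into one of $\phi \to (\psi \to \eta)$ and conversely, reinstating the outer quantifier over $\mathbf{x}$ afterwards.

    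For the top-to-bottom direction I would argue as follows. Suppose $r \Vdash \phi \wedge \psi \to \eta$. Let $t$ be the OTM with parameter $r$ that, on a realiser $a \Vdash \phi$, returns the OTM with parameters $r$ and $a$ that, on a realiser $b \Vdash \psi$, first assembles the code $c$ of the short program with parameters $a$ and $b$ that outputs $a$ on input $0$ and $b$ on input $1$, and then outputs $r(c)$. By the clause for $\bigwedge$ in \Cref{Definition:OTM-Realisability} such a $c$ realises $\phi \wedge \psi$, so $r(c) \Vdash \eta$ by the clause for implication; hence $t(a) \Vdash \psi \to \eta$ for every $a \Vdash \phi$, i.e.\ $t \Vdash \phi \to (\psi \to \eta)$.

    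For the bottom-to-top direction I would argue dually. Suppose $s \Vdash \phi \to (\psi \to \eta)$. Let $u$ be the OTM with parameter $s$ that, on a realiser $c \Vdash \phi \wedge \psi$, computes the realisers $a := c(0)$ and $b := c(1)$---legitimate, since $c(0) \Vdash \phi$ and $c(1) \Vdash \psi$ by \Cref{Definition:OTM-Realisability}---and outputs $s(a)(b)$. Two applications of the clause for implication then yield $s(a) \Vdash \psi \to \eta$ and $s(a)(b) \Vdash \eta$, so $u \Vdash \phi \wedge \psi \to \eta$.

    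I do not expect a genuine obstacle here; the finitary argument goes through essentially verbatim. The only point that needs a little care is keeping track of the internal representation of a realiser of the binary conjunction $\phi \wedge \psi$---namely, a program defined on the inputs $0$ and $1$---when packing it up in the first direction and taking it apart in the second. All the operations involved (copying parameters onto work tapes, building the two-case dispatcher, composing programs) are plainly OTM-computable under the conventions of \Cref{Section: Preliminaries}, so the verification reduces to unwinding \Cref{Definition:OTM-Realisability} as indicated above.
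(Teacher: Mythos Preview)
Your proof is correct and follows essentially the same approach as the paper's own proof: both directions amount to currying/uncurrying, packing and unpacking a realiser of the binary conjunction via its values on $0$ and $1$, and composing with the given implication realiser. Your write-up is in fact slightly more explicit than the paper's about how the conjunction realiser is assembled and decomposed.
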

\begin{proof}
    We have to show two directions. For the first direction, \emph{top-to-bottom}, let $r \Vdash \phi \wedge \psi \rightarrow \eta$. Now, we construct a realiser of $\phi \rightarrow (\psi \rightarrow \eta)$ as follows: given a realiser $r_\phi$ for $\phi$, output the OTM that, given a realiser $r_\psi$ for $\psi$, combines it with $r_\phi$ to obtain a realiser $r_{\phi \wedge \psi} \Vdash \phi \wedge \psi$. Then apply $r(r_{\phi \wedge \psi}) \Vdash \eta$.
    
    For the other direction, \emph{bottom-to-top}, suppose we have a realiser $r \Vdash \phi \rightarrow (\psi \rightarrow \eta)$. We obtain a realiser of $\phi \wedge \psi \rightarrow \eta$ as follows: given a realiser of $r_{\phi \wedge \psi} \Vdash \phi \wedge \psi$, compute realisers $r_{\phi} \Vdash \phi$ and $r_{\psi} \Vdash \psi$. By definition and our assumptions, $(r(r_{\phi}))(r_{\psi}) \Vdash \eta$. 
\end{proof}

\begin{proposition}
    \label{Proposition: Existential Rule}
    The following existential rule (Espíndola, \cite{Espindola2018}, Def. 1.1.1, 6) is realised:
    \begin{mathpar}
        \efrac{\phi \vdash_{\mathbf{x} \mathbf{y}} \psi}{\exists \mathbf{y}\phi \vdash_{\mathbf{x}} \psi}
    \end{mathpar}
    where no variable in $\mathbf{y}$ is free in $\psi$.
\end{proposition}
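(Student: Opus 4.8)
The plan is to treat the two directions of the $\efrac{\cdot}{\cdot}$ separately, in each case converting a realiser of one sequent into a realiser of the other by a short OTM that merely pipes realisers into one another. The one semantic fact that makes everything line up is the side condition: since no variable of $\mathbf{y}$ occurs free in $\psi$, for all sequences $\bar{X}$ of length $\ell(\mathbf{x})$ and $\bar{Y}$ of length $\ell(\mathbf{y})$ we have $\psi[\bar{Y}/\mathbf{y}][\bar{X}/\mathbf{x}] = \psi[\bar{X}/\mathbf{x}]$, so that a realiser of the one sentence is literally a realiser of the other. Recall also that realising the sequent $\Gamma \vdash_{\mathbf{z}} \Delta$ means realising $\forall \mathbf{z}(\bigwedge \Gamma \to \bigvee \Delta)$, and here the relevant contexts are $\mathbf{x}\mathbf{y}$ (the concatenation) and $\mathbf{x}$.

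\emph{Top-to-bottom.} Let $r \Vdash \forall \mathbf{x}\mathbf{y}(\phi \to \psi)$; I want a realiser of $\forall \mathbf{x}(\exists \mathbf{y}\phi \to \psi)$. I describe an OTM with parameter $r$ which, on a code $a$ of a sequence $\bar{X}$ of length $\ell(\mathbf{x})$, outputs an OTM that does the following when fed a realiser $s \Vdash (\exists \mathbf{y}\phi)[\bar{X}/\mathbf{x}]$: compute $s(0) = (b,t)$ where $b$ codes some $\bar{Y}$ of length $\ell(\mathbf{y})$ and $t \Vdash \phi[\bar{Y}/\mathbf{y}][\bar{X}/\mathbf{x}]$ (by the realisability clause for $\exists$ in \Cref{Definition:OTM-Realisability}); then, using \Cref{Theorem:low-level} and \Cref{Lemma:BasicAlgo}, form a code $c$ of the concatenation of $\bar{X}$ and $\bar{Y}$; apply $r$ to obtain $r(c) \Vdash (\phi \to \psi)[\bar{Y}/\mathbf{y}][\bar{X}/\mathbf{x}]$; and output $r(c)(t)$. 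By the realisability clause for $\to$ this realises $\psi[\bar{Y}/\mathbf{y}][\bar{X}/\mathbf{x}]$, which by the side condition equals $\psi[\bar{X}/\mathbf{x}]$, as required.

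\emph{Bottom-to-top.} Let $r \Vdash \forall \mathbf{x}(\exists \mathbf{y}\phi \to \psi)$; I want a realiser of $\forall \mathbf{x}\mathbf{y}(\phi \to \psi)$. I describe an OTM with parameter $r$ which, on a code $d$ of a sequence of length $\ell(\mathbf{x})+\ell(\mathbf{y})$, splits $d$ (again by \Cref{Theorem:low-level} and \Cref{Lemma:BasicAlgo}) into a code $a$ for the first $\ell(\mathbf{x})$ entries $\bar{X}$ and a code $b$ for the remaining $\ell(\mathbf{y})$ entries $\bar{Y}$, and outputs an OTM that, given a realiser $t \Vdash \phi[\bar{Y}/\mathbf{y}][\bar{X}/\mathbf{x}]$, first builds a realiser $s$ of $(\exists \mathbf{y}\phi)[\bar{X}/\mathbf{x}]$ by the trivial program with $s(0) = (b,t)$, and then outputs $r(a)(s)$. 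Since $r(a) \Vdash (\exists \mathbf{y}\phi \to \psi)[\bar{X}/\mathbf{x}]$, we get $r(a)(s) \Vdash \psi[\bar{X}/\mathbf{x}] = \psi[\bar{Y}/\mathbf{y}][\bar{X}/\mathbf{x}]$, as required.

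I do not expect a genuine obstacle here; the construction is a straightforward composition of realisers. The only points that need care are bookkeeping ones: packaging the realiser $r$ and any auxiliary codes into the ``program plus set of ordinals'' format of a potential realiser, and making sure that the concatenation and splitting of coded sequences respect the lengths $\ell(\mathbf{x})$ and $\ell(\mathbf{y})$ --- this is exactly where \Cref{Theorem:low-level} and \ref{Lemma:BasicAlgo4} are used --- together with the observation about $\psi$ that is guaranteed by the side condition on $\mathbf{y}$.
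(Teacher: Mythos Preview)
Your proposal is correct and follows essentially the same approach as the paper's proof: in both directions you unpack the existential realiser to extract (or package) the witness $\bar{Y}$ and the inner realiser, then feed these through $r$ in the obvious way. If anything, your write-up is slightly more careful than the paper's, since you make explicit the role of the side condition on $\mathbf{y}$ and the sequence concatenation/splitting bookkeeping.
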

\begin{proof}
    For the first direction, assume that $r \Vdash \forall \mathbf{xy} (\phi \rightarrow \psi)$. We have to find a realiser $t \Vdash \forall \mathbf{x} (\exists \mathbf{y} \phi \rightarrow \psi)$. Let $t$ be an implementation of the following algorithm: given some sequence $\bar{X}$ and a realiser $r_\exists \Vdash \exists \mathbf{y} \psi$. Compute from $r_\exists$ a code for some $\bar{Y}$ and a realiser $r_\phi \Vdash \phi$. Then calculate $(r(\bar{X}\bar{Y}))(r_\phi)$. This is a realiser of $\psi$.
    
    For the other direction, assume that $r \Vdash \forall \mathbf{x} (\exists \mathbf{y} \phi \rightarrow \psi)$. We construct a realiser $t \Vdash \forall \mathbf{xy} (\phi \rightarrow \psi)$. So let sequences $\bar{X}$, $\bar{Y}$ be given, and assume that we have a realiser $s \Vdash \phi(\bar{X}\bar{Y})$. We can compute a realiser $r_\exists$ for $\exists \mathbf{y} \phi(\bar{X})$ as the OTM that returns $\bar{X}$ and $s$. We can then return $(r(\bar{X}))(r_\exists)$, which is a realiser of $\psi$.
\end{proof}

\begin{proposition}
    \label{Proposition: Universal Rule}
    The following universal rule (Espíndola, \cite{Espindola2018}, Def. 1.1.1, 7) is realised:
    \begin{mathpar}
        \efrac{\phi \vdash_{\mathbf{x} \mathbf{y}} \psi}{\phi \vdash_{\mathbf{x}} \forall \mathbf{y} \psi}
    \end{mathpar}
    where no variable in $\mathbf{y}$ is free in $\phi$.
\end{proposition}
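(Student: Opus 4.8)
The plan is to verify both directions of $\efrac{\phi \vdash_{\mathbf{x}\mathbf{y}} \psi}{\phi \vdash_{\mathbf{x}} \forall \mathbf{y}\psi}$ by explicitly building the required OTM-realisers, exactly in the style of the proofs of \Cref{Proposition: Existential Rule} and \Cref{Proposition: Implication Rule}. Throughout, the one substantive observation is that since no variable of $\mathbf{y}$ is free in $\phi$, for any sequences $\bar{X}$ of length $\ell(\mathbf{x})$ and $\bar{Y}$ of length $\ell(\mathbf{y})$ we have $\phi[\bar{X}\bar{Y}/\mathbf{x}\mathbf{y}] = \phi[\bar{X}/\mathbf{x}]$, so a realiser of $\phi[\bar{X}/\mathbf{x}]$ is automatically a realiser of the relevant instance of $\phi$ in context $\mathbf{x}\mathbf{y}$; no machine needs to discard the $\bar{Y}$-part, it is simply never used.

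For the \emph{top-to-bottom} direction, assume $r \Vdash \forall \mathbf{x}\mathbf{y}(\phi \rightarrow \psi)$; I would construct $t \Vdash \forall \mathbf{x}(\phi \rightarrow \forall \mathbf{y}\psi)$ as follows. On input (a code for) $\bar{X}$, let $t$ return the OTM that, on input a realiser $r_\phi \Vdash \phi[\bar{X}/\mathbf{x}]$, returns the OTM that, on input a code for a sequence $\bar{Y}$ of length $\ell(\mathbf{y})$, first concatenates $\bar{X}$ and $\bar{Y}$ (using \Cref{Theorem:low-level} to manipulate the sequence codes) and then outputs $(r(\bar{X}\bar{Y}))(r_\phi)$. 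By hypothesis $r(\bar{X}\bar{Y}) \Vdash \phi[\bar{X}\bar{Y}/\mathbf{x}\mathbf{y}] \rightarrow \psi[\bar{X}\bar{Y}/\mathbf{x}\mathbf{y}]$, and since $\phi[\bar{X}\bar{Y}/\mathbf{x}\mathbf{y}] = \phi[\bar{X}/\mathbf{x}]$, feeding it $r_\phi$ yields a realiser of $\psi[\bar{X}\bar{Y}/\mathbf{x}\mathbf{y}]$, as required by clause~(viii) of \Cref{Definition:OTM-Realisability} for $\forall \mathbf{y}\psi$.

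For the \emph{bottom-to-top} direction, assume $r \Vdash \forall \mathbf{x}(\phi \rightarrow \forall \mathbf{y}\psi)$; I would construct $t \Vdash \forall \mathbf{x}\mathbf{y}(\phi \rightarrow \psi)$. Given codes for sequences $\bar{X}$, $\bar{Y}$ of lengths $\ell(\mathbf{x})$, $\ell(\mathbf{y})$, have $t$ split off $\bar{X}$, and on input a realiser $s \Vdash \phi[\bar{X}\bar{Y}/\mathbf{x}\mathbf{y}] = \phi[\bar{X}/\mathbf{x}]$ return $(r(\bar{X})(s))(\bar{Y})$. Here $r(\bar{X})(s) \Vdash \forall \mathbf{y}\psi[\bar{X}/\mathbf{x}]$, so applying it to $\bar{Y}$ gives a realiser of $\psi[\bar{X}\bar{Y}/\mathbf{x}\mathbf{y}]$, as desired. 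I do not expect any genuine obstacle here: the argument is a routine currying/uncurrying of realisers together with sequence-code bookkeeping justified by \Cref{Theorem:low-level}; the only point requiring care is the bookkeeping around the non-occurrence of $\mathbf{y}$ in $\phi$, which makes the displayed equalities of substitution instances hold on the nose.
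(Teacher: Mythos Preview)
Your proposal is correct and follows essentially the same approach as the paper: both directions are handled by the currying/uncurrying of realisers, outputting $(r(\bar{X}\bar{Y}))(r_\phi)$ in the top-to-bottom direction and $((r(\bar{X}))(s))(\bar{Y})$ in the bottom-to-top direction. If anything, you are slightly more explicit than the paper about why the side condition on $\mathbf{y}$ makes the substitution instances of $\phi$ coincide.
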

\begin{proof}
    For the \emph{top-to-bottom}-direction, assume that $r \Vdash \forall \mathbf{x} \mathbf{y} (\phi \rightarrow \psi)$. We have to find a realiser $t \Vdash \forall \mathbf{x} (\phi \rightarrow (\forall \mathbf{y} \psi))$. If $\bar{X}$ is a sequence and $r_\phi$ a realiser of $\phi$, then the OTM that takes some $\bar{Y}$ as input and returns $r(\bar X \bar Y)(r_\phi)$ is a realiser of $\forall \mathbf{y} \psi$. Call this realiser $r_{\bar X}$. Then, the OTM which takes some $\bar{X}$ as input and then returns $r_{\bar X}$ is a realiser of $\forall \mathbf{x} (\phi \rightarrow \forall \mathbf{y} \psi)$.
    
    For the \emph{bottom-to-top}-direction, assume that $r \Vdash \forall \mathbf{x} (\phi \rightarrow (\forall \mathbf{y} \psi))$. Then $\forall \mathbf{x} \mathbf{y} (\phi \rightarrow \psi)$ can be realised by the OTM that operates as follows: given sequences $\bar{X}$ and $\bar{Y}$ as input, return the OTM that, given a realiser $s \Vdash \phi$, returns $((r(\bar X))(s))(\bar Y) \Vdash \psi$.
\end{proof}

\begin{proposition}
    \label{Proposition: Small Distributivity Axiom}
    The small distributivity axiom (Espíndola, \cite{Espindola2018}, Def. 1.1.1, 8) is realised:
    $$\bigwedge_{i<\kappa} (\phi \vee \psi_i) \vdash_{\mathbf{x}} \phi \vee \left(\bigwedge_{i<\kappa} \psi_i\right)$$
    for each cardinal $\kappa$.
\end{proposition}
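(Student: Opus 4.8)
The plan is to exhibit directly a realiser $t$ of the implication $\bigwedge_{i<\kappa}(\phi\vee\psi_i)\rightarrow(\phi\vee\bigwedge_{i<\kappa}\psi_i)$; by the convention recorded just before \Cref{Proposition: Structural Rules are realised}, this suffices to realise the sequent, because the realiser I construct will not depend on the sets substituted for $\mathbf{x}$. So suppose $r\Vdash\bigwedge_{i<\kappa}(\phi\vee\psi_i)$. Unwinding clauses (6) and (5) of \Cref{Definition:OTM-Realisability}, for every $i<\kappa$ the computation $r(i)(0)$ halts with output a pair $\langle\gamma_i,s_i\rangle$ where $\gamma_i\in\{0,1\}$, and $s_i\Vdash\phi$ if $\gamma_i=0$ while $s_i\Vdash\psi_i$ if $\gamma_i=1$. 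The constructive content of the axiom is exactly that one can effectively inspect these $\kappa$-many choices and split on whether any of them landed in $\phi$.

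On input $r$, the realiser $t$ first conducts a bounded search over the set $\kappa$: for each $i<\kappa$ it simulates $r(i)(0)$ and records $\gamma_i$. If this search ever turns up an $i$ with $\gamma_i=0$, then $s_i\Vdash\phi$, and $t(r)$ returns the program that on input $0$ outputs $\langle 0,s_i\rangle$; this realises the left disjunct $\phi$. Otherwise $\gamma_i=1$ for every $i<\kappa$, so $s_i\Vdash\psi_i$ for all $i$, and $t(r)$ returns the program that on input $0$ outputs $\langle 1,u\rangle$, where $u$ is the program that on input $i$ runs $r(i)(0)$ and returns its second component. Then $u\Vdash\bigwedge_{i<\kappa}\psi_i$ by clause (6), so $t(r)$ realises the right disjunct. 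Assembling these pieces — the bounded search, reading off components of a pair, and the conditional branch — is routine for an OTM by \Cref{Theorem:low-level}; and $t$ is uniform whenever $r$ is, so the same argument covers uniform realisability.

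I expect the only real subtlety to be the termination of the search: a priori an OTM search over $\kappa$ halts only if each individual simulation $r(i)(0)$ halts, and this is precisely what $r\Vdash\bigwedge_{i<\kappa}(\phi\vee\psi_i)$ guarantees, together with the fact that the index class $\kappa$ is a genuine set, so the search runs over a set and therefore completes. Everything else is bookkeeping with codes, entirely analogous to the earlier disjunction and conjunction cases (\Cref{Proposition: Conjunction Rules,Proposition: Disjunction Rule}).
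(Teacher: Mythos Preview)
Your proof is correct and follows essentially the same approach as the paper: both construct a realiser that runs a bounded search through $i<\kappa$ looking for a disjunct landing in $\phi$, returns it if found, and otherwise packages the $\kappa$-many realisers of $\psi_i$ into a realiser of the conjunction. You give more explicit detail on the mechanics (unwinding the clauses, constructing $u$, noting termination and uniformity), but the underlying argument is the same.
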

\begin{proof}
    It is enough to construct an OTM that transforms a realiser of $\bigwedge_{i<\kappa} (\phi \vee \psi_i)$ into a realiser of $\phi \vee \left(\bigwedge_{i<\kappa} \psi_i\right)$. So let $r \Vdash \bigwedge_{i < \kappa} (\phi \vee \psi_i)$. The OTM proceeds as follows: first, search for a realiser of $\phi$ by going through all $i < \kappa$. As soon as a realiser of $\phi$ is found, we are done. If no realiser of $\phi$ is found, then we have, in fact, realisers for every $\psi_i$ for $i < \kappa$ and can therefore construct a realiser of $\bigwedge_{i < \kappa} \psi_i$.
\end{proof}

Recall that a \emph{bar} is an upwards-closed subset of a tree that intersects every branch of the tree.

\begin{proposition}
    \label{Proposition: Dual Distributivity}
    The dual distributivity rule (Espíndola, \cite{Espindola2018}, Def. 1.1.1, 9) is realised, i.e.:
    \begin{mathpar}
    \inferrule{
            \bigwedge_{g \in \gamma^{\beta+1}, g|_{\beta}=f} \phi_{g} \vdash_{\mathbf{x}} \phi_{f} \\ \beta<\kappa, f \in \gamma^{\beta} \\\\ 
            \phi_{f} \dashv \vdash_{\mathbf{x}}         \bigvee_{\alpha<\beta}\phi_{f|_{\alpha}} \\ \beta < \kappa, \text{ limit }\beta, f \in \gamma^{\beta}
        }
        {
            \bigwedge_{f \in B} \bigvee_{\beta<\delta_f}\phi_{f|_{\beta+1}} \vdash_{\mathbf{x}} \phi_{\emptyset}
        }
    \end{mathpar}
    \\
    holds whenever $\gamma$ is a cardinal strictly below $\kappa^+$, where $B$ denotes the subset of $\gamma^{<\kappa}$ that contains the minimal elements of some bar of the tree $\gamma^{\kappa}$ and, for $f\in B$, $\delta_{f}$ denotes the level of $f$.
    %for each cardinal $\gamma<\kappa^+$. Here $B \subseteq \gamma^{< \kappa}$ consists of the minimal elements of a given bar over the tree $\gamma^{\kappa}$, and the $\delta_f$ are the levels of the corresponding $f \in B$.
\end{proposition}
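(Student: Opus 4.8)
The plan is to exhibit an OTM witnessing the conclusion, in the style of the preceding soundness propositions. Having fixed realisers of all the premise-sequents, this machine receives a sequence $\bar{X}$ instantiating the context $\mathbf{x}$ together with a realiser $R$ of $(\bigwedge_{f\in B}\bigvee_{\beta<\delta_f}\phi_{f|_{\beta+1}})[\bar{X}/\mathbf{x}]$, and must return a realiser of $\phi_\emptyset[\bar{X}/\mathbf{x}]$. Applying the premise-realisers to $\bar{X}$ gives realisers $P_f$ of $(\bigwedge_{g}\phi_g\to\phi_f)[\bar{X}/\mathbf{x}]$ for every $f$ of level ${<}\kappa$ (the $g$ ranging over the children $f\frown\langle i\rangle$, $i<\gamma$) and realisers $Q^{\vdash}_f$, $Q^{\dashv}_f$ of the two implications between $\phi_f$ and $\bigvee_{\alpha<\ell(f)}\phi_{f|_\alpha}$, again with $\bar{X}$ substituted, for every $f$ of limit level; since $\gamma<\kappa^+$ these families are sets indexed by $\gamma^{<\kappa}$ and can be carried as a parameter.

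I would compute the output by a transfinite forward-chaining closure. Maintain a partial map $s$ from $\gamma^{<\kappa}$ to realisers with the invariant $s(f)\Vdash\phi_f[\bar{X}/\mathbf{x}]$. To initialise, note that $R$ yields, for each $h\in B$, a realiser of $\bigvee_{\beta<\delta_h}\phi_{h|_{\beta+1}}[\bar{X}/\mathbf{x}]$, hence a pair $\langle\beta_h,t_h\rangle$ with $\beta_h<\delta_h$ and $t_h\Vdash\phi_{h|_{\beta_h+1}}[\bar{X}/\mathbf{x}]$; set $s(h|_{\beta_h+1}):=t_h$ for all $h\in B$. Then iterate the following three rules until nothing changes: (1) if $s$ is defined on every child of $f$ but not on $f$, put $s(f):=P_f(\bar{s})$, where $\bar{s}$ is the realiser of $\bigwedge_g\phi_g[\bar{X}/\mathbf{x}]$ coded by $i\mapsto s(f\frown\langle i\rangle)$; (2) if $\ell(f)$ is a limit and $s$ is defined on some $f|_\alpha$ but not on $f$, put $s(f):=Q^{\dashv}_f$ applied to the evident realiser of $\bigvee_{\alpha'<\ell(f)}\phi_{f|_{\alpha'}}[\bar{X}/\mathbf{x}]$ determined by $s(f|_\alpha)$; (3) if $\ell(f)$ is a limit and $s(f)$ is defined, compute $Q^{\vdash}_f(s(f))(0)=\langle\alpha_0,r'\rangle$ and, if $s(f|_{\alpha_0})$ is still undefined, put $s(f|_{\alpha_0}):=r'$. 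Each rule is one of the manipulations of realisers of conjunctions, disjunctions and implications carried out in \Cref{Proposition: Conjunction Rules,Proposition: Disjunction Rule}, so the invariant is preserved; since $\dom(s)$ only grows inside the set $\gamma^{<\kappa}$, the closure stabilises after set-many scans, which an OTM can complete, and the machine outputs $s(\emptyset)$.

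The crux is that $s(\emptyset)$ becomes defined. Suppose not, and let $U$ be the set of $f$ on which $s$ is undefined at the fixpoint, so $\emptyset\in U$. Since no rule fires any longer: (a) every $f\in U$ has a child in $U$ (else rule (1) would apply at $f$); (b) if $\ell(f)$ is a limit and $f\notin U$ then $f|_\alpha\notin U$ for some $\alpha<\ell(f)$ (rule (3) will have fired at $f$). Now recursively set $f_0=\emptyset$, pick $f_{\alpha+1}$ a child of $f_\alpha$ lying in $U$ by (a), and $f_\lambda=\bigcup_{\alpha<\lambda}f_\alpha$ at limit stages; the limit stage remains in $U$, for $f_\lambda\notin U$ would give, by (b), some $f_\lambda|_\alpha=f_\alpha\notin U$, contradicting $f_\alpha\in U$. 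Hence $f_\alpha\in U$ for all $\alpha<\kappa$, so $b:=\bigcup_{\alpha<\kappa}f_\alpha\in\gamma^\kappa$ has every initial segment in $U$. But $B$ consists of the minimal elements of a bar over $\gamma^\kappa$, so the shortest initial segment $h:=b|_{\alpha_0}$ of $b$ that lies in the bar belongs to $B$; the initialisation then put $h|_{\beta_h+1}=f_{\beta_h+1}$, with $\beta_h+1\le\delta_h=\alpha_0<\kappa$, into $\dom(s)$, so $f_{\beta_h+1}\notin U$ — a contradiction. Therefore $s(\emptyset)\Vdash\phi_\emptyset[\bar{X}/\mathbf{x}]$, as required.

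The step I expect to be delicate is the treatment of limit levels, and it is there that the premise $\phi_f\dashv\vdash\bigvee_{\alpha<\ell(f)}\phi_{f|_\alpha}$ is used: rule (2) sends a realiser at an initial segment down to a limit node, while rule (3) — the genuinely indispensable one — pushes a realiser obtained at a limit node "from below" (via rule (1)) back onto one of its initial segments, without which the closure could stall on a subtree it has realised but cannot propagate upward. Unlike the $\vdash$-dual small distributivity axiom (\Cref{Proposition: Small Distributivity Axiom}), where one refines along a single branch, here rule (1) forces a traversal of the entire $\gamma$-branching tree, so premise (a) alone does not suffice and it is precisely the bar property of $B$ — every branch of $\gamma^\kappa$ meets it — that makes the closure reach $\phi_\emptyset$.
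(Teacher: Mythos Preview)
Your proof is correct and close in spirit to the paper's, but organised differently. The paper presents an explicit depth-first walk (Algorithm~1): it maintains a ``current node'' pointer $f$ together with the growing partial function $r_{(-)}$, descends to an undefined child whenever one exists (your rule~(1) failing), backs up one step when $f$ has successor length and $r_f$ is defined, and at a limit node with $r_f$ defined applies $r_2^{\vdash}$ to jump to some $f|_\alpha$ (your rule~(3)). Termination is argued by showing that looping or crashing would produce a $\kappa$-branch missing the bar. You instead compute a global forward-chaining closure of the same partial function and argue directly, by building a branch through the undefined set $U$, that stabilisation with $\emptyset\in U$ contradicts the bar property. The two arguments reach the same contradiction; your fixpoint formulation is arguably cleaner to verify, while the paper's pointer version is closer to an actual machine description.

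One remark: your rule~(2), which uses $Q^{\dashv}_f$ to push a realiser from an initial segment down to a limit node, is not needed --- your own correctness argument invokes only rules~(1) and~(3), and the paper's algorithm likewise uses only the $\vdash$ direction of the limit premise. Dropping rule~(2) does no harm to your proof, and you could streamline by omitting it.
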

\begin{proof}
    Our goal is to construct a realiser of 
    $$
        \bigwedge_{f \in B} \bigvee_{\beta<\delta_f}\phi_{f|_{\beta+1}} \vdash_{\mathbf{x}} \phi_{\emptyset},
    $$
    i.e., we have to construct an OTM that computes a realiser of $\phi_\emptyset$ from a realiser of $\bigwedge_{f \in B} \bigvee_{\beta<\delta_f}\phi_{f|_{\beta+1}}$. In doing so, we can use realisers of the antecedents of the rule: we denote by $r_1$ a realiser of the first antecedent and by $r_2^{\vdash}$ a realiser of the forward direction of the second antecedent.
    
    We claim that \Cref{Algo:Walking} describes a desired realiser, and we will now prove that it terminates with a realiser of $\phi_\emptyset$ when run on a realiser of $\bigwedge_{f \in B} \bigvee_{\beta<\delta_f}\phi_{f|_{\beta+1}}$. In fact, we will prove the contrapositive. So suppose that the OTM described by \Cref{Algo:Walking} does not terminate, i.e., it either loops or crashes. 
    
    First, assume that the machine loops. As our algorithm constructs $r_{(-)}$ in a monotone way, the partial function $r_{(-)}$ must stabilise before the loop. Hence, the algorithm must loop through lines 15 and 16: otherwise we would (eventually) still alter $r_{(-)}$ (lines 11--13) or contradict the well-foundedness of the ordinal numbers (lines 5--9). However, looping through lines 15 and 16 means that we build up a sequence $f$ that will eventually reach length $\kappa$. But then the operation of selecting a direct successor in line 15 will crash, a contradiction to the machine's looping.
    
    Secondly, assume that the machine crashes. It is easy to see that this must happen in line 15 as all other operations are well-defined (using the case distinctions and assumptions on $r$, $r_1$ and $r_2^\vdash$). A crash in line 15, however, will only occur if $f$ has reached length $\kappa$. This means that we have constructed a branch through $\gamma^{<\kappa}$ that does not intersect the bar $B$, a contradiction. \qedhere
       
    \begin{algorithm}[ht]
        \LinesNumbered
            \KwIn{A realiser $r$ for $\bigwedge_{f \in B} \bigvee_{\beta<\delta_f}\phi_{f|_{\beta+1}}$}
            \KwOut{A realiser $r_\emptyset \Vdash \phi_\emptyset$.}
            
            From $r$ extract a set $C \subseteq \gamma^{<\kappa}$ and a partial function $r_{(-)}: \gamma^{<\kappa} \to V$ such that $r_f \Vdash \phi_f$ for all $f \in C$. \\
            Let $f = \emptyset$. \\
            \While{$r_\emptyset$ is undefined}
            {
                %\If{$r_f$ is defined}
                \If{$f \in \dom(r_{(-)})$} % i.e. f defined
                {
                    \If{$f$ is of successor length $\alpha + 1$}
                    {
                        Set $f := f|_\alpha$.
                    }
                    \If{$f$ is of limit length $\beta$}
                    {
                        Calculate $r_2^\vdash(r_f)$ and extract from this $r_{f|_\alpha}$ for some $\alpha < \beta$. \\
                        Set $f := f|_\alpha$.
                    }
                }
                \Else %r_f undefined
                {
                    \If{$g \in \dom(r_{(-)})$ for all direct successors $g$ of $f$}
                    {
                        Combine the $r_g$ into a realiser $r' \Vdash \bigwedge_{g \in \gamma^{\beta+1}, g|_{\beta}=f} \phi_{g}$. \\
                        Set $r_f := r_1(r')$.
                    }
                    \Else
                    {
                        Select a direct successor $g \in \gamma^{<\kappa}$ of $f$ such that $r_g$ is undefined. \\
                        Set $f := g$. 
                    }
                }
            }
            Return $r_\emptyset$.
            
    		\caption{$\mathrm{Walking}(r)$, walking through the tree \label{Algo:Walking}}
    \end{algorithm}
\end{proof}

\begin{proposition}
    \label{Proposition: Transfinite Transitivity Rule}
    The transfinite transitivity rule (Espíndola, \cite{Espindola2018}, Def. 1.1.1, 10) is realised:
    \begin{mathpar}
    \inferrule{\phi_{f} \vdash_{\mathbf{y}_{f}} \bigvee_{g \in \gamma^{\beta+1}, g|_{\beta}=f} \exists \mathbf{x}_{g} \phi_{g} \\ \beta<\kappa, f \in \gamma^{\beta} \\\\ \phi_{f} \dashv \vdash_{\mathbf{y}_{f}} \bigwedge_{\alpha<\beta}\phi_{f|_{\alpha}} \\ \beta < \kappa, \text{ limit }\beta, f \in \gamma^{\beta}}{\phi_{\emptyset} \vdash_{\mathbf{y}_{\emptyset}} \bigvee_{f \in B}  \exists_{\beta<\delta_f}\mathbf{x}_{f|_{\beta +1}} \bigwedge_{\beta<\delta_f}\phi_{f|_{\beta+1}}}
    \end{mathpar}
    \\
    for each cardinal $\gamma<\kappa^+$, where $\mathbf{y}_{f}$ is the canonical context of $\phi_{f}$, provided that, for every $f \in \gamma^{\beta+1}$,  $FV(\phi_{f}) = FV(\phi_{f|_{\beta}}) \cup \mathbf{x}_{f}$ and $\mathbf{x}_{f|_{\beta +1}} \cap FV(\phi_{f|_{\beta}})= \emptyset$ for any $\beta<\gamma$, as well as $FV(\phi_{f}) = \bigcup_{\alpha<\beta} FV(\phi_{f|_{\alpha}})$ for limit $\beta$. Here $B \subseteq \gamma^{< \kappa}$ consists of the minimal elements of a given bar over the tree $\gamma^{\kappa}$, and the $\delta_f$ are the levels of the corresponding $f \in B$.
\end{proposition}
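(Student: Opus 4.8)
The plan is to argue dually to \Cref{Proposition: Dual Distributivity}: instead of walking \emph{down} the tree from the bar towards the root, we climb \emph{up} from the root towards the bar, collecting existential witnesses along the way. First I would fix realisers of all the premises and collect them, via the usual coding of set-many parameters, into a single parameter: concretely, a function $f\mapsto r_1^{f}$, where $r_1^{f}$ realises the universal closure of $\phi_f\to\bigvee_{g\in\gamma^{\beta+1},\,g|_\beta=f}\exists\mathbf{x}_g\phi_g$, together with a function $f\mapsto r_2^{f}$ on limit-length $f$, where $r_2^{f}$ realises the backward direction $\bigwedge_{\alpha<\beta}\phi_{f|_\alpha}\to\phi_f$. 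We must exhibit a realiser of $\forall\mathbf{y}_\emptyset\big(\phi_\emptyset\to\psi\big)$, where $\psi$ abbreviates $\bigvee_{f\in B}\exists_{\beta<\delta_f}\mathbf{x}_{f|_{\beta+1}}\bigwedge_{\beta<\delta_f}\phi_{f|_{\beta+1}}$; so, fixing a sequence $\bar{Y}$ of length $\ell(\mathbf{y}_\emptyset)$ and a realiser $r_\emptyset\Vdash\phi_\emptyset[\bar{Y}/\mathbf{y}_\emptyset]$ as input, the task is to compute a realiser of $\psi[\bar{Y}/\mathbf{y}_\emptyset]$.

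Next I would describe the climbing machine, in the spirit of \Cref{Algo:Walking}. It maintains a node $f\in\gamma^{<\kappa}$, the realisers $r_{f|_\alpha}$ built so far for all $\alpha\le\ell(f)$, and the witness sequences $\bar{X}_{f|_{\alpha+1}}$ chosen for the contexts $\mathbf{x}_{f|_{\alpha+1}}$ along the branch; by the free-variable provisos, the accumulated substitution at $f|_\alpha$ --- namely $\bar{Y}$ together with the $\bar{X}_{f|_{\alpha'+1}}$ for $\alpha'+1\le\alpha$ --- is exactly a substitution for $FV(\phi_{f|_\alpha})=\mathbf{y}_{f|_\alpha}$, which makes each premise-realiser applicable, and these substitutions are coherent under restriction. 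Starting from $f=\emptyset$, we repeat: if $f\in B$, halt; otherwise, first ensure $r_f$ is defined --- when $\ell(f)$ is a limit $\beta$, combine the recorded $r_{f|_\alpha}$ ($\alpha<\beta$) into a realiser of $\bigwedge_{\alpha<\beta}\phi_{f|_\alpha}$ (under the substitution at $f$) and apply $r_2^{f}$ --- and then apply $r_1^{f}$ to $r_f$ to obtain a realiser of $\bigvee_{g}\exists\mathbf{x}_g\phi_g$; from this extract one successor $g$ of $f$, a witness $\bar{X}_g$, and a realiser $r_g$ of $\phi_g$ under the extended substitution, record these, and set $f:=g$. When the loop halts at some $f\in B$ of level $\delta_f$, we have recorded $r_{f|_{\beta+1}}$ and $\bar{X}_{f|_{\beta+1}}$ for every $\beta<\delta_f$, so we output the realiser that on input $0$ returns $f$ together with a realiser of $\exists_{\beta<\delta_f}\mathbf{x}_{f|_{\beta+1}}\bigwedge_{\beta<\delta_f}\phi_{f|_{\beta+1}}$ which, on input $0$, returns a code for the concatenated sequence $\langle\bar{X}_{f|_{\beta+1}}\mid\beta<\delta_f\rangle$ and the realiser of the conjunction sending $\beta$ to $r_{f|_{\beta+1}}$; correctness is then a routine induction along the branch using the clauses of \Cref{Definition:OTM-Realisability} for $\bigvee$, $\exists$ and $\bigwedge$.

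Finally I would prove termination. Each pass through the loop either halts (at $B$) or strictly extends $f$ by one coordinate, with the value at limit stages forced to be the union of the previous initial segments, and every step is well-defined because the only choices are made by feeding realisers of $\phi_f$ to the premise-realisers, which by \Cref{Definition:OTM-Realisability} always return the data we extract. Hence, if the loop never halted, the machine would build a branch of $\gamma^\kappa$ of full length $\kappa$ meeting no element of $B$; but $B$ consists of the minimal elements of a bar over $\gamma^\kappa$, and every branch of that tree has an initial segment in the bar, hence one in $B$ --- a contradiction. I expect the main obstacle to be not the termination argument (which is in fact simpler than in \Cref{Proposition: Dual Distributivity}, the walk here being monotone) but the bookkeeping of the accumulating substitution: one must verify carefully, via $FV(\phi_f)=FV(\phi_{f|_\beta})\cup\mathbf{x}_f$ at successors and $FV(\phi_f)=\bigcup_{\alpha<\beta}FV(\phi_{f|_\alpha})$ at limits, that the constants carried along the branch always line up with the canonical contexts $\mathbf{y}_f$ and remain coherent, and that the flattened witness block $\langle\bar{X}_{f|_{\beta+1}}\mid\beta<\delta_f\rangle$ really matches the quantifier prefix $\exists_{\beta<\delta_f}\mathbf{x}_{f|_{\beta+1}}$ occurring in the conclusion.
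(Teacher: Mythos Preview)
Your proposal is correct and follows essentially the same approach as the paper: climb monotonically from the root using the first premise at successor steps and the backward direction of the second premise at limit steps, terminating at the bar without backtracking. The paper's proof is much terser than yours (it just sketches this idea in a few lines and refers back to \Cref{Proposition: Dual Distributivity}), but the strategy and the termination argument are the same.
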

\begin{proof}
    The proof of this proposition is a simplification of the proof of \Cref{Proposition: Dual Distributivity}: again, we need to search through the tree until we hit the bar. This time, however, we are starting from a realiser of $\phi_\emptyset$. Then use the first assumption to compute realisers at successor levels and use the second assumption to compute realisers at limit levels. By the definition of the bar $B$, this procedure must at some point reach some node contained in $B$, and we have found the desired realiser. Note that this procedure does not require any backtracking as in the previous proposition and, therefore, the formalisation of the desired OTM is straightforward.
\end{proof}

Espíndola's \cite{Espindola2018} system of \emph{$\kappa$-first-order logic} is axiomatised in the sequent calculus by the rules mentioned in \Cref{Proposition: Equality Rules,Proposition: Conjunction Rules,Proposition: Disjunction Rule,Proposition: Implication Rule,Proposition: Existential Rule,Proposition: Universal Rule,Proposition: Small Distributivity Axiom,Proposition: Dual Distributivity,Proposition: Transfinite Transitivity Rule}. 

\begin{corollary}
    \label{Corollary: Soundness of kappa-first-order logic}
    OTM-realisability is sound with respect to $\kappa$-first-order logic for every cardinal $\kappa$. The same holds if we substitute realisability with uniform realisability.
\end{corollary}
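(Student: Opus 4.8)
The plan is to prove this by a straightforward transfinite induction on the structure of derivations in Espíndola's sequent calculus for $\kappa$-first-order logic. Fix a cardinal $\kappa$. A derivation is a well-founded tree whose nodes are labelled by sequents $\Gamma \vdash_{\mathbf{x}} \Delta$, whose leaves are labelled by axioms, and where each internal node is obtained from its (set-many) children by an application of one of the rules collected in \Cref{Proposition: Structural Rules are realised,Proposition: Equality Rules,Proposition: Conjunction Rules,Proposition: Disjunction Rule,Proposition: Implication Rule,Proposition: Existential Rule,Proposition: Universal Rule,Proposition: Small Distributivity Axiom,Proposition: Dual Distributivity,Proposition: Transfinite Transitivity Rule}. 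Since the tree is well-founded, I would induct on its rank, the statement at each node being that the sequent labelling it is realised.

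For the base case, every leaf is an instance of the identity axiom, the first equality axiom, the first conjunction axiom, the first disjunction axiom, or the small distributivity axiom, and each of these was shown above to be realised. For the inductive step, suppose a node is obtained from its children by a rule $\frac{A}{B}$ (or, for a bidirectional rule $\efrac{A}{B}$, by one of $\frac{A}{B}$, $\frac{B}{A}$, both of which the relevant proposition establishes); by the induction hypothesis every premise sequent in $A$ is realised, and since the corresponding proposition shows the rule to be realised, some sequent in $B$ is realised. As every rule of the calculus is single-conclusion, $B$ is a singleton, so the sequent labelling the node is realised. Hence every derivable sequent is realised, which is soundness. The statement for uniform realisability follows in exactly the same way: each construction in the proofs of the cited propositions transforms uniform realisers of the premises into a uniform realiser of the conclusion, because the programs built there never branch on the particular code handed to them but only relay codes, pack them into parameters, or apply the premise-realisers; thus uniformity is preserved up the derivation tree.

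The substantive work is entirely contained in the nine preceding propositions, so the only points needing care here are bookkeeping ones. First, one must confirm that a $\kappa$-first-order derivation is genuinely a well-founded tree even though an individual rule instance may have $\kappa$-many premises (as in the conjunction and disjunction rules) or premises indexed by a bar on $\gamma^{<\kappa}$ (as in the dual distributivity and transfinite transitivity rules), so that induction on rank is available. Second, one must check that the local notion ``the rule $\frac{A}{B}$ is realised'' — that is, ``if all sequents in $A$ are realised, then some sequent in $B$ is realised'' — chains correctly along the tree, which is immediate once one observes that every $B$ occurring is a singleton. I do not expect either point to present a real obstacle.
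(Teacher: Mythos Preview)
Your proposal is correct and is essentially the paper's own argument made explicit: the paper's proof is the single sentence ``This is just the combination of \Cref{Proposition: Equality Rules,Proposition: Conjunction Rules,Proposition: Disjunction Rule,Proposition: Implication Rule,Proposition: Existential Rule,Proposition: Universal Rule,Proposition: Small Distributivity Axiom,Proposition: Dual Distributivity,Proposition: Transfinite Transitivity Rule},'' and you have simply unfolded what this combination means, namely a transfinite induction on the rank of the well-founded derivation tree together with the observation that all conclusion sets $B$ are singletons. Your inclusion of the structural rules from \Cref{Proposition: Structural Rules are realised} and your remark on why uniformity is preserved are reasonable elaborations that the paper leaves implicit.
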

\begin{proof}
    This is just the combination of \Cref{Proposition: Equality Rules,Proposition: Conjunction Rules,Proposition: Disjunction Rule,Proposition: Implication Rule,Proposition: Existential Rule,Proposition: Universal Rule,Proposition: Small Distributivity Axiom,Proposition: Dual Distributivity,Proposition: Transfinite Transitivity Rule}.
\end{proof}

Before moving on to questions of which set theory is OTM-realised in the next section, we will take a brief moment to note a few logical properties of our notion of realisability.

\begin{proposition}[Semantic Disjunction and Existence Properties]
    If $\phi \vee \psi$ is realised, then $\phi$ is realised or $\psi$ is realised. More generally, if $\bigvee_{i < \kappa} \phi_i$ is realised, then there is some $i < \kappa$ such that $\phi_i$ is realised.
    
    If $\exists^\alpha \mathbf{x} \phi(\mathbf{x})$ is realised, then there is a sequence $\bar X$ such that $\phi(\bar X)$ is realised.
\end{proposition}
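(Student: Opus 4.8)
The plan is to observe that both assertions are immediate unfoldings of Definition~\ref{Definition:OTM-Realisability}; no construction or termination argument is needed. The key point to keep in mind is that, in our setup, ``$r(0)$ halts with the prescribed output'' is part of what it \emph{means} for $r$ to realise a disjunction or an existential formula, so there is nothing to check about the behaviour of any machine — in contrast to, e.g., \Cref{Proposition: Dual Distributivity}, where termination of the constructed OTM is the heart of the matter.

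First I would treat the general disjunction. Suppose $r = \langle c, P\rangle \Vdash \bigvee_{i<\kappa}\phi_i$. By clause~(5) of Definition~\ref{Definition:OTM-Realisability}, this says precisely that $r(0)$ is defined and equals a pair $\langle \gamma, s\rangle$ with $\gamma < \kappa$ and $s \Vdash \phi_\gamma$. Reading $s$ as (a code of) a potential realiser, this exhibits $\phi_\gamma$ as realised, so some disjunct is realised. The binary case ``if $\phi\vee\psi$ is realised, then $\phi$ or $\psi$ is realised'' is simply the instance $\kappa = 2$ with $\phi_0 = \phi$ and $\phi_1 = \psi$, since $\phi\vee\psi$ abbreviates $\bigvee_{i<2}\phi_i$ in our language.

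For the existence property I would argue in exactly the same style. Assume $r \Vdash \exists^\alpha \mathbf{x}\,\phi(\mathbf{x})$. By clause~(7) of Definition~\ref{Definition:OTM-Realisability}, $r(0)$ is defined and equals a pair $(a, s)$ where $a$ is a code of some sequence $\bar X$ of length $\ell(\mathbf{x})$ and $s \Vdash \phi[\bar X/\mathbf{x}]$. Hence $\phi(\bar X)$ is realised, witnessed by the potential realiser coded by $s$. No step here presents any obstacle; the proof is a one-line appeal to the relevant clauses of the definition of $\Vdash$ in each case.
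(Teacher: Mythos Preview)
Your proposal is correct and matches the paper's own proof, which simply states that the result follows from the definition of realisability for disjunctions and existential quantifiers. You have merely spelled out the relevant clauses of Definition~\ref{Definition:OTM-Realisability} in more detail than the paper does.
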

\begin{proof}
    This follows from the definition of realisability for disjunctions and existential quantifiers.
\end{proof}

We say that a formula of propositional logic is \textit{(uniformly) OTM-realisable} if every substitution instance of the formula is (uniformly) OTM-realisable.

\begin{theorem}
    There is a formula of propositional logic which is uniformly OTM-realisable but not a consequence of intuitionistic propositional logic.
\end{theorem}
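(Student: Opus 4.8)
The plan is to exhibit one propositional formula $F$ that is classically valid -- a necessary condition, since otherwise substituting $\bot$ and $\bot\to\bot$ for its variables along a falsifying valuation would yield an unrealisable instance -- but not derivable in intuitionistic propositional logic, and then to verify that every set-theoretic substitution instance of $F$ is uniformly realised. I would take $F$ to be the weak law of excluded middle $\neg p\vee\neg\neg p$, or, if that turns out to be too weak (see below), a Kreisel--Putnam-style formula such as $(\neg p\to(q\vee r))\to((\neg p\to q)\vee(\neg p\to r))$; each is classically valid and refuted in a small finite Kripke model, so neither is a consequence of intuitionistic propositional logic. This non-derivability is routine and I would dispose of it first, so as to isolate the real content in the realisability half.

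The engine of the realisability half is the triviality of realisers of negations: for any set-theoretic formula $\theta$, if $\theta$ is not realised then $\langle\mathrm{id},\emptyset\rangle$ realises $\neg\theta$ vacuously (as $\bot$ is never realised), whereas if $\theta$ is realised then $\neg\theta$ is not, so $\langle\mathrm{id},\emptyset\rangle$ realises $\neg\neg\theta$ and, more generally, any implication $\neg\theta\to\sigma$ is realised by an arbitrary program. For $\neg p\vee\neg\neg p$ this is already enough: given a substitution instance $\neg\psi\vee\neg\neg\psi$, one splits on whether $\psi$ is realised, letting the realiser select the left disjunct (witnessed by $\langle\mathrm{id},\emptyset\rangle$) in the first case and the right disjunct (again by $\langle\mathrm{id},\emptyset\rangle$) in the second. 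For the Kreisel--Putnam candidate, with $p,q,r$ replaced by $\psi,\chi_0,\chi_1$, one again splits on whether $\psi$ is realised: if not, one feeds the canonical realiser of $\neg\psi$ to a given realiser of the premise, obtains a realiser of $\chi_0\vee\chi_1$, reads off a disjunct $\chi_i$ together with a realiser $v$ of it, and returns a realiser of the conclusion selecting $\neg\psi\to\chi_i$ and witnessing it by the constant map $z\mapsto v$; if $\psi$ is realised, both disjuncts of the conclusion hold vacuously and a fixed program works. All these realisers compute single-valued functions, so uniformity is preserved.

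The step I expect to be the main obstacle is ensuring \textbf{totality} of the realiser of $F$, together with the closely related question of what ``substitution instance'' should mean. When $\psi$ is realised the premise of the Kreisel--Putnam instance is realised only vacuously, so a realiser of that premise may be an everywhere-diverging program, and the realiser of $F$ must nonetheless halt and return a realiser of the conclusion; hence the split ``is $\psi$ realised?'' must not be carried out by running the premise-realiser on an argument that might fail to be a realiser. If substitution instances are sentences -- equivalently, formulas in the language with a constant adjoined for each set and no free variables -- this is harmless, because ``is $\psi$ realised?'' is then a definite fact and one hard-wires the correct branch, using in the non-vacuous branch a fixed realiser of $\psi$ guaranteed to exist. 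If substitution of formulas with free variables is permitted, the branch varies with the assignment and no OTM can decide it uniformly; then $\neg p\vee\neg\neg p$ is no longer realisable, and even plain Kreisel--Putnam fails, so $F$ must be chosen with enough extra structure that the premise-realiser is only ever queried on arguments that are legitimate realisers whenever the premise carries information -- the point at which a genuine Rose-type formula is required. I would settle which convention is in force, pick $F$ accordingly, assemble the realising OTM-program explicitly, and conclude by combining uniform realisability of $F$ with the Kripke countermodel.
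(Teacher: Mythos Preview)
Your proposal is correct and more carefully argued than the paper's own proof, which simply names the Rose formula and cites Plisko's survey for the arithmetical analogue. The approaches differ in the choice of witness: the paper goes straight to the Rose formula, while you first try the weak law of excluded middle $\neg p \vee \neg\neg p$ and only fall back on a Rose-type formula if substitutions may carry free variables.

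Under the paper's actual convention your simpler candidate already works. Realisability in \Cref{Definition:OTM-Realisability} is defined only for sentences in the language expanded by constants for every set, and in \Cref{Section: Proof-Theoretic Application} substitutions are explicitly maps from propositional letters to \emph{sentences}. So for each fixed sentence $\psi$ the fact ``$\psi$ is realised'' is determinate, and your hard-wired realiser---returning $\langle 0,\mathrm{id}\rangle$ or $\langle 1,\mathrm{id}\rangle$ according to that fact---uniformly realises $\neg\psi \vee \neg\neg\psi$; no search and no totality worry arises. This is more elementary than invoking Rose. What the paper's choice of the Rose formula buys is robustness: it transfers verbatim from the arithmetical setting (where substitutions with free variables are standard and weak excluded middle does fail), and it would survive a change of convention. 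Your closing paragraph already anticipates exactly this trade-off, so your plan and the paper's proof converge as soon as free variables are on the table.
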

\begin{proof}
   Let $\varphi$ be the formula $\lnot p\lor \lnot q$. The \emph{Rose formula} (due to Rose \cite{Rose1953}) is the following sentence:
    $$((\lnot \lnot \varphi\rightarrow \varphi)\rightarrow (\lnot \varphi \lor \lnot \lnot \varphi))\rightarrow (\lnot \varphi \lor \lnot \lnot \varphi).$$
    The Rose formula is OTM-realisable but it is not a consequence of intuitionistic propositional logic. This is shown in just the same way as in the arithmetical case (see Plisko's survey \cite[Section 6.1]{Plisko2009}).
\end{proof}

A similar result holds with respect to first-order logic. 

\begin{theorem}
    Each substitution instance of the following Markov's principle, formulated in first-order logic, is uniformly OTM-realisable:
    $$
        (\forall x (P(x) \vee \neg P(x)) \wedge \neg \neg \exists x P(x)) \rightarrow \exists x P(x)
    $$
    However, this formula is not a theorem of intuitionistic first-order logic.
\end{theorem}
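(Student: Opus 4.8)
The plan is to prove the two halves of the statement separately: the realisability half by constructing a realiser, and the non-derivability half by a Kripke countermodel.

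For non-derivability I would invoke completeness of intuitionistic first-order logic for Kripke semantics and exhibit a node failing to force Markov's principle. Take two nodes $w_0\le w_1$ with domains $D_0=\{b\}\subseteq D_1=\{a,b\}$ and interpret $P^{w_0}=\emptyset$, $P^{w_1}=\{a\}$. Then $w_0\Vdash\forall x(P(x)\vee\neg P(x))$ (the only element of $D_0$ is $b$, and $\neg P(b)$ holds persistently from $w_0$, while at $w_1$ both $a$ and $b$ are decided), $w_0\Vdash\neg\neg\exists x P(x)$ (because $w_1\Vdash\exists x P(x)$, so no node above $w_0$ forces $\neg\exists x P(x)$), but $w_0\not\Vdash\exists x P(x)$; hence $w_0$ does not force the implication.

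For uniform realisability, fix a substitution instance and write $\psi(x)$ for the formula substituted for $P(x)$ (suppressing further free variables). I would describe a uniform realiser $R$ of $(\forall x(\psi(x)\vee\neg\psi(x))\wedge\neg\neg\exists x\psi(x))\to\exists x\psi(x)$ as follows. Given a realiser $s$ of the antecedent, $R$ splits it into $r_1\Vdash\forall x(\psi(x)\vee\neg\psi(x))$ and $r_2\Vdash\neg\neg\exists x\psi(x)$, lets $P$ be the set of ordinals occurring in $s$, and runs an unbounded search: enumerating sets $Z$ by traversing the relative constructible hierarchy $\Le[P]$ (which an OTM with parameter $P$ can do), it computes $r_1(b)(0)=\langle i,s'\rangle$ for a code $b$ of $Z$, and the first time $i=0$ occurs — in which case $s'\Vdash\psi(Z)$ by \Cref{Definition:OTM-Realisability} — it halts with output $\langle b,s'\rangle$, which realises $\exists x\psi(x)$. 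Here $r_2$ is not queried by $R$; it only enters the correctness argument. Uniformity of $R$ is immediate, since it queries $r_1$ on sets, not on specific codes, and recombines the outputs code-independently.

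The correctness argument splits into two observations, plus the main obstacle. First, if the antecedent is realised then $\exists x\psi(x)$ is realised: otherwise $\neg\exists x\psi(x)$ would be realised vacuously (nothing realises $\exists x\psi(x)$, so every potential realiser realises $\exists x\psi(x)\to\bot$), and then $r_2$ could not realise $\neg\neg\exists x\psi(x)$, since $r_2$ applied to such a realiser would have to produce a realiser of $\bot$. Second, if $u\Vdash\psi(Y)$ for some set $Y$, then $\neg\psi(Y)$ is not realised, so $r_1$ — a realiser of a universally quantified binary disjunction — must return the left disjunct on every code of $Y$, with the accompanying $s'$ genuinely realising $\psi(Y)$; so the search halts once it meets such a $Y$. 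The hard part will be guaranteeing that such a $Y$ occurs in the enumerated part $\Le[P]$ of the universe, so that the search terminates: in the arithmetical analogue of this fact this is automatic because the search runs over $\Nat$, which is fully searchable and on which the decision procedure is total, whereas here one must exploit that $r_1$ decides $\psi$ on \emph{all} sets together with the absoluteness of OTM-computations to locate a witness already inside $\Le[P]$. Splitting and recombining realisers, the vacuity observation about $r_2$, and checking the Kripke model are routine.
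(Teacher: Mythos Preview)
Your Kripke-model argument for non-derivability is correct and is exactly the ``standard argument'' the paper invokes.

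For realisability, you correctly isolate the crux but do not close it. Your realiser runs an unbounded search through $\Le[P]$, where $P$ is the parameter extracted from the given realiser $s$ of the antecedent, and you observe that termination requires a witness already inside $\Le[P]$. The absoluteness you gesture at does not obviously deliver this: absoluteness of OTM-computations tells you that the run of $r_1$ on any $Z \in \Le[P]$ lies in $\Le[P]$, but it does not force the $\Sigma_1(P)$ statement ``some input makes $r_1$ return the left disjunct'' to reflect from $\Ve$ down to $\Le[P]$ --- set-theoretic $\Sigma_1$ statements with a parameter $P$ do not in general reflect to $\Le[P]$. So there is no reason, on what you have written, why the (or any) witness $Y$ should live in $\Le[P]$, and the proof is incomplete precisely at the point you flag as hard.

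The paper bypasses the difficulty by exploiting that potential realisers may carry \emph{arbitrary} set parameters, and that we only need to exhibit \emph{some} realiser. If the antecedent is ever realised then, by the $r_2$-argument you yourself give, some fixed $Y$ has $\psi(Y)$ realised; now simply take any $\Ve_\alpha \ni Y$ as the parameter of $R$ and let $R$ perform a \emph{bounded} search through this $\Ve_\alpha$, querying the given $r_1$ on each element. Every $r_1$ coming from any realiser of the antecedent must return the left disjunct at $Y$ (since $\neg\psi(Y)$ cannot be realised), so the search halts. If the antecedent is never realised, any $R$ works vacuously. The freedom to choose the parameter \emph{after} knowing a witness is exactly what your approach forgoes; your route would, if completed, yield a parameter-light realiser and hence a sharper statement, but as it stands the gap is real.
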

\begin{proof}
    The realisability of every instance of Markov's principle %is realised 
    follows by providing an OTM that executes a bounded search for a witness of $P(x)$ within some big enough parameter $\Ve_\alpha$. A standard argument using Kripke semantics shows that Markov's principle is not a theorem of intuitionistic first-order logic.\footnote{Take the Kripke model based on the frame $K := \set{v, w}$ with $v < w$ and domains $D_v := \set{a}$, $D_w := \set{a,b}$, and valuate the predicate $P$ to be true only of the element $b$ at node $w$. It is then straightforward to check that $v \Vdash \forall x (P(x) \vee \neg P(x))$, $v \Vdash \neg \neg \exists x P(x)$ but $v \not \Vdash \exists x P(x)$.}
\end{proof}

These two results show that the propositional and first-order logics of OTM-realisability are stronger than intuitionistic propositional and intuitionistic first-order logic, respectively. 

\subsection{Set Theory}
In this section, we will study the realisability of various axioms of set theory. Some of these statements were already proved for uniform realisability of finitary logic by the first author \cite{Carl2019}; we include their proofs here for the sake of completeness. We will begin by proving that our notion of uniform realisability realises the axioms of the infinitary version of Kripke-Platek set theory (for the finite version, see  also \cite[Proposition 9.4.7]{Carl2019}).

\begin{definition}
    We define \textit{infinitary Kripke-Platek set theory}, denoted by $\Lo^\in_{\infty, \infty}$-$\mathrm{KP}$, on the basis of intuitionistic $\kappa$ first-order logic for every cardinal $\kappa$ with the following axioms and axiom schemata:
    \begin{enumerate}
        \item Axiom of extensionality:  $$\forall x\forall y (\forall z ((z\in x\rightarrow z\in y) \land (z\in y\rightarrow z\in x)) \rightarrow x=y),$$
        \item Axiom of empty set: $$\exists x \forall y (y\notin x);$$
        \item Axiom of pairing: $$\forall x \forall y \exists z \forall w ((w\in z\rightarrow( w=x\lor w=y)) \land ((w=x\lor w=y)\rightarrow w\in z)),$$
        \item Axiom of union: 
        $$\forall x \exists y ((y\in x \rightarrow \exists z (y\in z \land z\in x))\land (\exists z (y\in z \land z\in x) \rightarrow y\in x)),$$
        \item Axiom schema of induction:
        $$\forall \mathbf{y}((\forall x (\forall z\in x \varphi[z/x]) \rightarrow \varphi)\rightarrow \forall x \varphi),$$
        for every infinitary formula $\varphi(x,\mathbf{y})$,
        \item Axiom schema of $\Delta^{\infty}_0$-separation:        $$\ \forall \mathbf{y}\forall v \exists z \forall u ((u\in z\rightarrow (u\in v \land \varphi[u/x]))\land ((u\in v \land \varphi[u/x])\rightarrow  u\in z)),$$
        for every infinitary $\Delta^{\infty}_0$-formula $\varphi(x,\mathbf{y})$, and,
        \item Axiom schema of $\Delta^{\infty}_0$-collection:
    $$\forall \mathbf{z}(\forall x \exists y  \varphi \rightarrow \forall w \exists w' \forall x\in w\exists y\in w' \varphi),$$
    for every infinitary $\Delta^{\infty}_0$-formula $\varphi(x,y,\mathbf{z})$.
    \end{enumerate}
\end{definition}

The axiom schema of \textit{full collection} is obtained from $\Delta^{\infty}_0$-collection by allowing arbitrary infinitary formulas.

\begin{theorem}
    The axioms of infinitary Kripke-Platek set theory are uniformly realised. Moreover, the axiom schema of full collection is uniformly realised.
\end{theorem}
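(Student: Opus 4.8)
The plan is to exhibit, for each axiom and axiom schema, an explicit OTM-program with parameters that uniformly realises it, using throughout the computability facts of Lemma~\ref{Lemma:BasicAlgo}, the computability of $\Delta^{\infty}_{0}$-truth from Theorem~\ref{Theo:SatRelationDelta0}, the universal realisability program of Lemma~\ref{Lemma:UniversalRealisability}, and the triviality of realised $\in$ and $=$ from Lemma~\ref{Lemma: equality and set-membership are trivial}. The ``finite'' axioms are immediate. For empty set, output a code for $\emptyset$ together with any program, since $Y\in\emptyset$ is never realised and the relevant implication is vacuous. For pairing and union, given codes for the parameters, compute a code for $\{x,y\}$, resp.\ for $\bigcup x$, by feeding the evident sequence of codes to the routine of Lemma~\ref{Lemma:BasicAlgo} that builds a set from a sequence of codes, and realise each half of the defining biconditional by decoding a membership witness in one direction and recomputing a membership realiser (Lemma~\ref{Lemma: equality and set-membership are trivial}) in the other. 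For extensionality, observe that if the antecedent of the implication is realised at sets $X,Y$ then by Lemma~\ref{Lemma: equality and set-membership are trivial} it follows that $X\subseteq Y$ and $Y\subseteq X$, hence $X=Y$ genuinely holds, so the realiser may discard the hypothesis-realiser and simply return the $\mathtt{Decode}$-based code-isomorphism of Lemma~\ref{Lemma: equality and set-membership are trivial}.

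For $\Delta^{\infty}_{0}$-separation, given codes for $\bar Y$ and $W$, the property ``$\varphi[w'/x][\bar Y/\mathbf y]$'' of $w'$ is computable by Theorem~\ref{Theo:SatRelationDelta0}, so the clause of Lemma~\ref{Lemma:BasicAlgo} about computable subsets yields a code for $z=\{w'\in W: \varphi[w'/x][\bar Y/\mathbf y]\}$. The direction ``$w'\in z\to w'\in W\wedge\varphi$'' is handled by decoding: the witness lies in $W$ and satisfies $\varphi$, so $\varphi$ is true and a realiser of it is produced by the universal realisability program of Lemma~\ref{Lemma:UniversalRealisability}, while a realiser of $w'\in W$ is recomputed via $\mathtt{Decode}$. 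The converse direction reads off truth of $w'\in W$ and of $\varphi$ from the given realiser (Lemma~\ref{Lemma:Delta0}) and then recomputes a realiser of $w'\in z$.

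For $\Delta^{\infty}_{0}$-collection the crucial observation is that one never evaluates $\varphi$: given a realiser $r$ of $\forall x\exists y\,\varphi[\bar Z/\mathbf z]$ and a code for $W$, for each $X\in W$ one reads off from $r(X)(0)$ a code $b_X$ of a witness $Y_X$ together with a realiser $s_X\Vdash\varphi[X/x][Y_X/y][\bar Z/\mathbf z]$, forms $W'=\{Y_X: X\in W\}$ with the set-from-sequence routine of Lemma~\ref{Lemma:BasicAlgo}, and realises $\forall x\in W\,\exists y\in W'\,\varphi$ by the program that, on a code for $X\in W$, returns $b_X$ together with a realiser of $Y_X\in W'$ and $s_X$. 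Since this argument uses $r$ only to transport witnesses and realisers and inspects $\varphi$ nowhere, it applies verbatim to arbitrary infinitary $\varphi$; this yields the ``moreover'' about full collection at no extra cost.

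The main obstacle is the $\in$-induction schema $\forall\mathbf y((\forall x(\forall z\in x\,\varphi[z/x])\to\varphi)\to\forall x\,\varphi)$, for which, given a realiser $r$ of the hypothesis, we must build a realiser of $\forall x\,\varphi$, i.e.\ for each code of a set $X$ a realiser of $\varphi[X/x]$, carrying the parameters $\bar Y$ for $\mathbf y$ along throughout. The plan is a transfinite recursion along the well-founded relation $\in$ on $\mathrm{tc}(\{X\})$, which an OTM can carry out: processing each $W\in\mathrm{tc}(\{X\})$ only after all its members, we assemble from the already-stored realisers of $\varphi[Z/x]$, for $Z\in W$, a realiser $s_W\Vdash\forall z\in W\,\varphi[z/x]$ — the program that, on a code for $Z$, looks up by set-value (using computability of equality) the stored realiser of $\varphi[Z/x]$, its behaviour on $Z\notin W$ being irrelevant since $Z\in W$ is then unrealisable — and then set $r_W:=r(W)(s_W)\Vdash\varphi[W/x]$; finally we return $r_X$. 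Two things need checking: that the recursion halts, which it does because $\mathrm{tc}(\{X\})$ is a set on which $\in$ is well-founded, so only set-many stages occur and at each stage $r$ is applied only to the legitimate realiser $s_W$; and that $r_W\Vdash\varphi[W/x]$ holds at every stage, which follows by $\in$-induction on $W$. Lastly, all the realisers described above are uniform: every manipulation observes only the sets coded by its inputs and never their particular codes — all table look-ups and membership tests being performed up to equality of sets — so each induces a well-defined single-valued function.
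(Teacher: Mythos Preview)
Your proposal is correct and follows essentially the same route as the paper: the finite axioms are dispatched directly, $\Delta^{\infty}_{0}$-separation is handled via Theorem~\ref{Theo:SatRelationDelta0}, Lemma~\ref{Lemma:BasicAlgo} and Lemma~\ref{Lemma:UniversalRealisability}, collection is realised by harvesting witnesses and realisers from the hypothesis-realiser without inspecting $\varphi$ (so full collection follows immediately), and $\in$-induction is realised by a transfinite recursion over $\mathrm{tc}(\{X\})$ that stores realisers on an auxiliary tape and looks them up by set-value. Your write-up is in fact somewhat more careful than the paper's in two respects: you make explicit why the collection argument extends to arbitrary $\varphi$, and you argue uniformity at the end, whereas the paper leaves both points implicit.
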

\begin{proof}
    It is straightforward to construct realisers for the axioms of empty set, pairing, extensionality and union. For each of the remaining axioms, we will now give an informal description of an algorithm that realises it.
    
    First, consider the axiom schema of induction. For every infinitary formula $\varphi(x,\mathbf{y})$ we have to show that the corresponding instance is realisable: 
    $$
        \forall \mathbf{y} (\forall x (\forall z \in x \varphi[z/x] \rightarrow \varphi) \rightarrow \forall x \varphi).
    $$  
    It is sufficient to describe a program $c$ that takes a code $a$ for a $\bar Y$, a code $b$ for $X$ and a realiser $r$ for $\forall x (\forall z \in x \varphi[z/x]) \rightarrow \varphi$ as input, and computes a realiser $\phi[X/x]$. A realiser of the corresponding instance of the induction axiom is then obtained through currying: transform $c(\cdot,\cdot,\cdot)$ into $\lambda a. \lambda b. \lambda r. c(a,b,r)$.
    
    We will describe a procedure to obtain a realiser of $\phi[X/x,\bar Y/\mathbf{y}]$ by recursively building up realisers in the transitive closure of $X$. To do so, the program $c$ uses two auxiliary tapes, $\mathtt{Done}$ and $\mathtt{Realisers}$, to keep track of its progress. The tape $\mathtt{Done}$ is used to keep track of the sets $Z$ in the transitive closure of $X$, for which a realiser of $\varphi[Z/x,\bar{Y}/\mathbf{y}]$ has already been computed. Meanwhile, $\mathtt{Realisers}$ is used as a queue on which the realisers for all sets in $\mathtt{Done}$ are saved. Now, until all members of the essential domain of $b$ are in $\mathtt{Done}$, the program keeps running through the elements of the essential domain of $b$. For each $\alpha$ in the essential domain of $b$, the program checks whether all its members are contained in $\mathtt{Done}$. If so, the program uses the realisers contained in $\mathtt{Realisers}$ to compute a realiser of $\forall z \in d_b(\alpha) \varphi[z/x,\bar{Y}]$. Note that this can easily be done by using the pair $\langle c',\{b,f\}\rangle$, where $f$ is a code of the current content of $\mathtt{Realisers}$ and $c'$ is the program that searches through $f$ for a realiser of the set whose code is given as input. Then the program computes $r(d)$ to obtain a realiser $r'$ of $\varphi[d_b(\alpha)/x, \bar{Y}/\mathbf{y}]$ and saves this pair $\langle \alpha,r'\rangle$ in $\mathtt{Realisers}$. Once realisers for every element of the essential domain of $b$ are computed, the program computes a realiser of $\varphi[X/x, \bar{Y}/\mathbf{y}]$ in the same way and returns it.
    
    Second, consider the axiom schema of $\Delta^{\infty}_0$-separation. By Lemma \ref{Lemma:Delta0} and Lemma \ref{Lemma:UniversalRealisability} it is enough to show that for every $X$ and $\bar{Y}$, the set $\{Z\in X\mid \varphi[Z/x,\bar{Y}/\mathbf{y}]\}$ is computable. But this follows from Theorem \ref{Theo:SatRelationDelta0} and Lemma \ref{Lemma:BasicAlgo}.
    
    Finally, consider the axiom schema of collection. For every infinitary $\varphi(x,y,\mathbf{z})$ we have the axiom
    $$
    \forall \mathbf{z}(\forall x \exists y  \varphi \rightarrow \forall w \exists w' \forall x\in w\exists y\in w' \varphi).
    $$
    Let $\bar{Z}$ be a sequence coded by some $a$. It suffices to show that there is a program that given realiser $r$ of $\forall x \exists y  \varphi $, and a code $b$ for a set $W$, computes a code for a set $W'$ such that $\forall x\in W\exists y\in W' \varphi$. Our program uses an auxiliary queue $\mathtt{Codes}$. The program searches through $b$ and for every $\alpha \in \dom(d_b)$ such that $d_b(\alpha)\in d_b(\rho_b)$ uses $r$ to compute a code for a set $Y$ such that $\varphi[d_b(\alpha)/x,Y/y,\bar{Z}/\mathbf{z}]$ is realised and saves the computed code in $\mathtt{Codes}$. The claim follows by Lemma \ref{Lemma:BasicAlgo}.
\end{proof}

The proof of the following lemma is analogous to a proof by the first author (\cite[Lemma 8.6.3]{Carl2019}), which, in turn, followed the ``cardinality method'' of Hodges, see Hodges \cite{Hodges}, Lemma 3.2.

\begin{lemma}\label{Lemma:OTMNotIcreaseCardinality}
    Let $\Phi$ be an $\mathrm{OTM}$-program. Then for every binary sequence $s$ of size $\kappa\geq \omega$, we have the following: if $\Phi$ halts on $s$, then $|\Phi(s)|<\kappa^{+}$. Moreover, there is $\gamma<\kappa^+$ such that $\Phi(s)\in \Le_\gamma[s]$.
\end{lemma}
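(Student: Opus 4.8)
The plan is to combine the absoluteness of OTM computations with a Löwenheim--Skolem and condensation argument for the relativised constructible hierarchy $\Le[s]$, in the spirit of \cite[Lemma 8.6.3]{Carl2019}. The basic fact to invoke is that a halting OTM computation with input $s$ is unique and uniformly $\Delta_1$-definable (with parameter $s$) over weak set theories---the stages are obtained by transfinite recursion, taking pointwise inferior limits of configurations at limit ordinals---and is therefore absolute between $\Ve$ and any sufficiently closed transitive model containing $s$ (cf.\ the absoluteness results of Koepke used in \cite[Theorem 3.2.13 \& Lemma 2.5.45]{Carl2019}). In particular, if $\Phi$ halts on $s$ then its halting computation, and hence its output $\Phi(s)$, already belong to $\Le[s]$, so there is an ordinal $\lambda$ such that $\Le_\lambda[s]$ reflects enough of $\mathrm{ZF}^{-} + (\Ve = \Le[s])$ and models ``$\Phi$ halts on $s$'', with the halting computation as a witness.

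Next I would pass to an elementary submodel $M \prec \Le_\lambda[s]$ with $s \in M$, $\kappa + 1 \subseteq M$, and $|M| = \kappa$; this exists because $|s| = \kappa \ge \omega$. Since $M$ is extensional and well-founded, the relativised condensation lemma (cf.\ \cite{Jech}) shows that its transitive collapse has the form $\Le_{\bar\lambda}[s]$ for some ordinal $\bar\lambda$; the collapse fixes $s$ pointwise because $\kappa \subseteq M$ and $s \subseteq \kappa$, and $|\bar\lambda| \le |M| = \kappa$, so $\bar\lambda < \kappa^{+}$. By elementarity $\Le_{\bar\lambda}[s] \models$ ``$\Phi$ halts on $s$''.

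Now comes the crucial step. By upward $\Sigma_1$-absoluteness a halting computation genuinely exists, and by downward absoluteness together with the uniqueness of halting computations, the witness living inside $\Le_{\bar\lambda}[s]$ must be the true halting computation of $\Phi$ on $s$. Its length is the actual halting stage $\theta$, and since the entire computation is an element of $\Le_{\bar\lambda}[s]$ we obtain $\theta < \bar\lambda$; the output $\Phi(s)$ is read off the output tape at stage $\theta$ and hence also lies in $\Le_{\bar\lambda}[s]$. Taking $\gamma := \bar\lambda < \kappa^{+}$ settles the ``moreover'' clause, and since $\Le_{\bar\lambda}[s]$ is transitive of cardinality $|\bar\lambda| + \kappa = \kappa$ we get $|\Phi(s)| \le \kappa < \kappa^{+}$.

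The main obstacle is precisely this crucial step: reflecting the statement ``$\Phi$ halts on $s$'' down a Löwenheim--Skolem hull only gives a model that \emph{believes} $\Phi$ halts, and one must rule out that this belief is an artefact of the hull having too few ordinals to ``see'' the computation diverge. This is exactly what the uniqueness and $\Delta_1$-definability of halting computations deliver: the witness provided by the collapsed model is forced to coincide with the genuine halting computation, pinning the true halting stage below $\bar\lambda < \kappa^{+}$. The remaining ingredients---relativised Gödel condensation and the Löwenheim--Skolem theorem applied to $\Le_\lambda[s]$---are standard.
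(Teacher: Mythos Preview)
Your argument is correct and follows essentially the same route as the paper: express ``$\Phi$ halts on $s$'' by a $\Sigma_1$ (with $\Delta_0$ matrix) statement, take a Skolem hull of size $\kappa$ containing $s$ inside a level of $\Le[s]$ that sees the computation, collapse and apply relativised condensation to land in some $\Le_{\gamma}[s]$ with $\gamma<\kappa^{+}$, and then invoke absoluteness/uniqueness of halting computations to conclude that the internal witness is the genuine one. One cosmetic slip: you write ``$s\subseteq\kappa$'', but $s$ is a binary \emph{sequence} (a function), so what you actually need is $\mathrm{tc}(\{s\})\subseteq M$; this is harmless since $|\mathrm{tc}(\{s\})|=\kappa$ and one simply throws it into the hull---the paper's own proof is equally informal on this point.
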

\begin{proof}
    Assume $\kappa\geq \omega$ and that $\Phi(s)$ halts. Note that $\Phi(s)$ cannot be longer than the halting time $\lambda$ of $\Phi$ on input $s$. Indeed, $\Phi$ will need at least one step in order to compute each bit of the output.  
    
    Let $\varphi(x,y,z)$ be the $\Delta^{\omega}_0$-formula expressing that $x$ is the computation of the program $y$ on input $z$. Consider the $\Sigma^{\infty}_1$-sentence $\exists x \varphi(x,\Phi,s)$ expressing the fact that there is a computation of $\Phi$ on $s$. Let $\delta=\max\{\aleph_0, \kappa, \lambda\}$. Note that $\Le_{\delta^{+}}[s]\models \exists x \varphi(x,\Phi,s)$. Indeed, $\Le_{\delta^{+}}[s]$ contains the computation $\sigma$ of $\Phi$ over $s$, and $\varphi$ is absolute between transitive models. 
    
    Now, let $H$ be the Skolem hull of $\kappa\cup\{s\}$ in $\Le_{\delta^{+}}[s]$. Then, $H$ has size $\kappa$ and so does the transitive collapse $M$ of $H$. But then $M$ is a transitive model of $\exists x \varphi(x,\Phi,s)$. Therefore, there is $\sigma'\in M$ such that $M\models \varphi(\sigma',\Phi,s)$, and since $\varphi$ is a $\Delta^{\infty}_0$-formula it is absolute between transitive models of set theory. Therefore, $\sigma'$ is the computation of $\Phi$ with input $s$ of size at most $\kappa$. Finally, $\lambda \geq \kappa$ and by the Condensation Lemma, $M=\Le_\gamma[s]$ for some $\gamma<\delta^{+}=\kappa^{+}$. So, $\Phi(s) \in \Le_\gamma[s]$.
\end{proof}

\begin{lemma}[{\cite[Proposition 9.4.3]{Carl2019}} ]
    \label{Lemma:Pow}
    The axiom of power set is not realised. 
\end{lemma}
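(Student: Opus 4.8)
The strategy is to contradict \Cref{Lemma:OTMNotIcreaseCardinality}: a realiser of the power set axiom, applied to a small code for an infinite cardinal $\kappa$, would have to output a code for (a superset of) $\wp(\kappa)$, but $|\wp(\kappa)| = 2^\kappa > \kappa$ is too large to be coded by the output of an OTM whose input has size $\kappa$. Here we take the power set axiom to be
\[
    \forall x \exists y \forall z\, \bigl((z \in y \rightarrow z \subseteq x) \wedge (z \subseteq x \rightarrow z \in y)\bigr),
\]
where ``$z \subseteq x$'' abbreviates the (finitary, hence $\Delta^\omega_0$) formula $\forall w\,(w \in z \rightarrow w \in x)$.

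Suppose towards a contradiction that $r = \langle c, P \rangle \Vdash \forall x \exists y\, \phi(x,y)$, where $\phi(x,y)$ is the matrix above. Fix an infinite cardinal $\kappa \geq |P|$ and a code $a$ for the ordinal $\kappa$ with $|a| \leq \kappa$; such a code exists because $\mathrm{tc}(\{\kappa\}) = \kappa+1$ has cardinality $\kappa$ and the Gödel pairing maps $(\kappa+1)^2$ into an ordinal of cardinality $\kappa$. By \Cref{Definition:OTM-Realisability} we have $r(a) \Vdash \exists y\, \phi(\kappa, y)$, hence $r(a)(0) = (b, s)$ where $b$ is a code of some set $Y$ and $s \Vdash \phi(\kappa, Y)$. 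Treating the parameter $P$ as part of the input, so that the total input to the underlying parameterless program has size $\leq \kappa$, two applications of \Cref{Lemma:OTMNotIcreaseCardinality} (first to the computation $r(a)$, then to the computation $r(a)(0)$) give $|b| \leq \kappa$. Since $\dom(d_b)$ is an ordinal of cardinality at most $|b|$, and $d_b$ restricts to a bijection from $\essdom(b)$ onto $\mathrm{tc}(\{Y\})$, we conclude $|Y| \leq |\mathrm{tc}(\{Y\})| \leq \kappa$.

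Next, I would show that $\wp(\kappa) \subseteq Y$. Let $Z \subseteq \kappa$ be arbitrary and let $e$ be a code for $Z$. Then $s(e) \Vdash (Z \in Y \rightarrow Z \subseteq \kappa) \wedge (Z \subseteq \kappa \rightarrow Z \in Y)$, so its second component $s(e)(1)$ realises $Z \subseteq \kappa \rightarrow Z \in Y$. As $Z \subseteq \kappa$ is a true $\Delta^\omega_0$-sentence once the constants are substituted, \Cref{Lemma:Delta0} furnishes a realiser $t \Vdash Z \subseteq \kappa$; hence $(s(e)(1))(t) \Vdash Z \in Y$, and by the clause for ``$\in$'' in \Cref{Definition:OTM-Realisability} together with \Cref{Lemma:IsoCodEq} this forces $Z \in Y$. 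Thus $\wp(\kappa) \subseteq Y$, whence $2^\kappa = |\wp(\kappa)| \leq |Y| \leq \kappa$, contradicting Cantor's theorem.

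The main obstacle, as I see it, is bookkeeping rather than conceptual: one has to be careful when invoking \Cref{Lemma:OTMNotIcreaseCardinality}, since it concerns a parameterless program and a single input sequence, so $\kappa$ must be chosen to dominate $|P|$ and the several extraction steps ($r(a)$, then $r(a)(0)$, then the applications of $s$ and of $s(e)(1)$) must each be checked not to increase cardinality beyond $\kappa$. The other slightly delicate point is the coding fact used above: a binary sequence of length $\leq \kappa$ can only be the (low-level code of a high-level) code of a set whose transitive closure has cardinality $\leq \kappa$; this is immediate from the definitions of $\mathtt{C}^\circ$ and $\mathtt{C}$ but deserves a sentence of justification.
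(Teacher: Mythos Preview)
Your argument is correct and follows essentially the same strategy as the paper: assume a realiser $\langle c,P\rangle$, apply it to a set whose size dominates $|P|$, and invoke \Cref{Lemma:OTMNotIcreaseCardinality} to see that the output is too small to code the power set. The paper's proof is terser and chooses the parameter set $P$ itself as the test input (rather than an auxiliary $\kappa\geq|P|$), and it suppresses the unpacking of $r(a)(0)=(b,s)$ and the verification that $\wp(\kappa)\subseteq Y$ that you spell out; your more careful bookkeeping is an improvement in rigour but not a different idea.
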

\begin{proof}
    A realiser of the power set axiom is a pair $\langle c,P \rangle$ where $c$ is the code of a program with parameter $P$ that takes a code of a set $X$ as input and returns a code for $\wp(X)$. Now let $a$ be any code for the set $P$. By Lemma \ref{Lemma:OTMNotIcreaseCardinality}, the output of program $c$ with input $a$ and parameter $P$ has size ${<}|P|^{+}$. Therefore, the result of the computation cannot be a code of $\wp(P)$.
\end{proof}

\begin{corollary}
    There are sentences $\varphi$ and $\chi$ in the language of set theory such that $\varphi \rightarrow \chi$ is uniformly realised but $\varphi \rightarrow \chi$ is false.
\end{corollary}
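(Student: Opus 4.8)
The plan is to exploit \Cref{Lemma:Pow}: the power set axiom is a sentence that is true in $V$ (which satisfies $\ZFC$) but admits no realiser whatsoever. The key observation is the behaviour of $\Vdash$ on implications: by clause~(4) of \Cref{Definition:OTM-Realisability}, a potential realiser $r$ realises $\phi \to \psi$ exactly when $r(s) \Vdash \psi$ for every $s \Vdash \phi$. Consequently, if $\phi$ has no realiser at all, then \emph{every} potential realiser realises $\phi \to \psi$ vacuously; in particular we may pick one that codes a single-valued function, so that $\phi \to \psi$ is even uniformly realised.

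Concretely, I would take $\varphi$ to be the power set axiom and $\chi$ to be $\bot$, so that $\varphi \to \chi$ is just $\neg\varphi$. First, \Cref{Lemma:Pow} gives that $\varphi$ is not realised; hence, letting $c$ be (a code for) the program that outputs a fixed code of $\emptyset$ on every input --- which computes a single-valued function --- the potential realiser $\langle c, \emptyset\rangle$ uniformly realises $\varphi \to \chi$. Second, $\varphi \to \chi$ is false: since $V \models \ZFC$, the power set axiom $\varphi$ holds, whereas $\chi = \bot$ is false, so $\varphi \to \chi$ fails in $V$. This produces the required pair $\varphi, \chi$.

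I do not expect any genuine obstacle; the only point requiring care is the direction of the material implication. We need $\varphi$ to be \emph{true} (not merely unrealised) for $\varphi \to \chi$ to be genuinely false, which is precisely why the argument uses the power set axiom --- true but unrealisable by \Cref{Lemma:Pow} --- rather than some unrealisable but false sentence. Note that the soundness results of the preceding subsection are not needed here: the corollary falls out directly from \Cref{Lemma:Pow} together with the clause of \Cref{Definition:OTM-Realisability} governing implication.
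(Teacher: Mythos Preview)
Your proof is correct and follows essentially the same approach as the paper: take $\varphi$ to be the power set axiom (true but unrealised by \Cref{Lemma:Pow}) and $\chi$ a false sentence, so that $\varphi \to \chi$ is vacuously (uniformly) realised yet false. The only cosmetic difference is that the paper chooses $\chi = \exists x\,(x \neq x)$ rather than $\chi = \bot$, which makes no difference to the argument.
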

\begin{proof}
    Let $\varphi$ be the power set axiom and $\chi$ be the sentence $\exists x \ x \neq x$. It is then trivial that $\varphi \rightarrow \chi$ is uniformly realised by the previous Lemma \ref{Lemma:Pow}. However, $\varphi \rightarrow \chi$ is false since the power set axiom holds and $\exists x \ x \neq x$ is false.
\end{proof}

Lubarsky \cite{Lubarsky2002} suggested the following infinity axiom in the context of intuitionistic Kripke-Platek set theory:
\begin{equation*}
    \exists x (\emptyset \in x \wedge (\forall y \in x \ y \cup \set{y} \in x) \wedge (\forall y\in x \ (y = \emptyset \vee \exists z \in x \ y = z \cup \set{z}))).
    \tag{Infinity}
\end{equation*}

\begin{proposition}
    \label{Proposition:Infinity realised}
    The infinity axiom is uniformly realised.
\end{proposition}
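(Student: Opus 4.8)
The plan is to observe that the infinity axiom, once its set-theoretic abbreviations are eliminated, is a \emph{true} $\Sigma^{\infty}_1$-sentence, and then to invoke the equivalence between truth and uniform realisability for such sentences established earlier (Lemma \ref{Lemma:Sigma1Fin}, extended in Lemma \ref{Lemma:Delta0}).

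The first step is to rewrite the axiom in the official language, taking care that every quantifier stays bounded. I would replace ``$\emptyset \in x$'' by $\exists w \in x\, \forall u \in w\, \bot$, the conjunct ``$y = \emptyset$'' by $\forall u \in y\, \bot$, the expression ``$y \cup \{y\} \in x$'' by $\exists w \in x\,(y \in w \wedge \forall u \in w\,(u \in y \vee u = y) \wedge \forall u \in y\, u \in w)$, and ``$y = z \cup \{z\}$'' analogously. The outer quantifiers ``$\forall y\ y \in x \rightarrow \cdots$'' are already the bounded quantifier $\forall y \in x$, and ``$\exists z \in y$'' is explicitly bounded. After these rewritings the matrix $\psi(x)$ of the axiom has no infinitary connectives and only bounded quantifiers, so $\psi$ is a $\Delta^{\omega}_0$-formula and the infinity axiom has the form $\exists x\, \psi(x)$ with $\psi \in \Delta^{\omega}_0$; that is, it is a $\Sigma^{\omega}_1$-sentence, hence also a $\Sigma^{\infty}_1$-sentence.

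The second step is to note that the sentence is true in $\Ve$: the set $\omega$ of finite von Neumann ordinals witnesses $\exists x\, \psi(x)$, since $\emptyset \in \omega$, since $\omega$ is closed under $y \mapsto y \cup \{y\}$, and since every nonempty element of $\omega$ has the form $z \cup \{z\}$ for some $z$ among its elements. The third step then finishes the proof: by Lemma \ref{Lemma:Sigma1Fin} (or Lemma \ref{Lemma:Delta0}) a true $\Sigma^{\omega}_1$-sentence is uniformly realised, so the infinity axiom is uniformly realised. If an explicit realiser is wanted, one unwinds the proof of Lemma \ref{Lemma:Sigma1Fin}: the realiser returns, on input $0$, a fixed code of $\omega$ together with the realiser of $\psi[\omega/x]$ produced by the universal realisability program of Lemma \ref{Lemma:UniversalRealisability}, which succeeds because $\psi[\omega/x]$ is true and its truth is decidable by Theorem \ref{Theo:SatRelationDelta0}.

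The only delicate point, and the one I would treat most carefully, is the bookkeeping in the first step: the \emph{naive} unpackings, such as ``$\emptyset \in x$'' as $\exists w\,(w = \emptyset \wedge w \in x)$ or ``$y = \emptyset$'' as $\forall z\,(z \notin y)$, introduce unbounded quantifiers and would destroy the $\Delta_0$-classification, so one must use the equivalent bounded formulations throughout. Once the matrix is checked to lie in $\Delta^{\omega}_0$, the conclusion follows immediately from the cited results.
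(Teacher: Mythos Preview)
Your proposal is correct and follows essentially the same approach as the paper, which simply observes that the infinity axiom is a $\Sigma^{\infty}_1$-formula and invokes Lemma~\ref{Lemma:Delta0}. Your version is in fact more careful than the paper's one-line proof, as you explicitly verify that the matrix can be written with only bounded quantifiers and note that $\omega$ witnesses truth.
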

\begin{proof}
    The infinity axiom is a $\Sigma^{\infty}_1$-formula and, therefore, uniformly realised by \Cref{Lemma:Delta0}.
\end{proof}

%\begin{equation*}
%    \forall x \exists \alpha \ x \in \mathrm{L}_\alpha
%    \tag{$\Ve = \Le$}
%\end{equation*}

The following theorem is a variant of a result of the first author (\cite[Proposition 3]{Carl2018A}).

\begin{theorem}[{cf. \cite[Proposition 3]{Carl2018A}}]
    Assume $\Ve = \Le$. If $\phi$ is a true $\Pi_2$-formula, then $\phi$ is uniformly realised. 
\end{theorem}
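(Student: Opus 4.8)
The plan is to derive the theorem directly from the universal realisability program of \Cref{Lemma:UniversalRealisability}. Write $\phi$ in the form $\forall \mathbf{x}\,\exists \mathbf{y}\,\psi(\mathbf{x},\mathbf{y})$ with $\psi$ a bounded formula (a $\Delta^{\infty}_0$-formula; in the finitary case, a $\Delta^{\omega}_0$-formula). By the clauses for $\forall$ and $\exists$ in \Cref{Definition:OTM-Realisability}, a realiser of $\phi$ is just an OTM that, given a code $a$ of a sequence $\bar{X}$ of length $\ell(\mathbf{x})$, returns a realiser of the sentence $\exists \mathbf{y}\,\psi[\bar{X}/\mathbf{x}]$; such a realiser is in turn an OTM that on input $0$ outputs a pair $(b,s)$ with $b$ a code of some $\bar{Y}$ and $s \Vdash \psi[\bar{X}/\mathbf{x}][\bar{Y}/\mathbf{y}]$.

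First I would note that $\exists \mathbf{y}\,\psi(\mathbf{x},\mathbf{y})$ is an infinitary $\Sigma^{\infty}_1$-formula in context $\mathbf{x}$. Since $\phi$ is true, the sentence $\exists \mathbf{y}\,\psi[\bar{X}/\mathbf{x}]$ is true for every $\bar{X}$, hence uniformly realised by \Cref{Lemma:Delta0}. Invoking $\Ve = \Le$ now, the ``moreover'' part of \Cref{Lemma:UniversalRealisability} provides an OTM-program $\Phi$ that, given a code of $\bar{X}$ together with a (fixed) code of the $\Sigma^{\infty}_1$-formula $\exists \mathbf{y}\,\psi$, returns the code of a realiser of $\exists \mathbf{y}\,\psi[\bar{X}/\mathbf{x}]$. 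By the reduction in the previous paragraph, the OTM that on input $a$ runs $\Phi$ on $a$ and $\godel{\exists \mathbf{y}\,\psi}$ and returns the result is therefore a realiser of $\phi$.

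The only substantive ingredient is the $\Sigma^{\infty}_1$-clause of \Cref{Lemma:UniversalRealisability}, and this is precisely where $\Ve = \Le$ is used: unfolding it, $\Phi$ carries out an \emph{unbounded} search through the OTM-enumeration of $\Le$ for a witness $\bar{Y}$, testing each candidate with the $\Delta^{\infty}_0$-decision procedure of \Cref{Theo:SatRelationDelta0} and then recursing into the true $\Delta^{\infty}_0$-sentence $\psi[\bar{X}/\mathbf{x}][\bar{Y}/\mathbf{y}]$. The truth of $\phi$ is exactly what guarantees that such a witness exists and, under $\Ve = \Le$, already lies in $\Le$, so the search halts; without $\Ve = \Le$ there is in general no OTM-enumeration of $\Ve$ and the argument collapses. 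In this sense the theorem is a corollary of the machinery already developed, the one thing to check being that the hypothesis ``$\phi$ true'' furnishes precisely the termination condition that the $\Le$-search needs.
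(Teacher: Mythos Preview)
Your proposal is correct and is essentially the same argument as the paper's: both exploit the OTM-enumerability of $\Le$ under $\Ve=\Le$ to run an unbounded search for a witness $\bar Y$, test candidates with the $\Delta_0$-decision procedure of \Cref{Theo:SatRelationDelta0}, and then realise the resulting true $\Delta_0$-instance. The only difference is packaging: you invoke the ``moreover'' clause of \Cref{Lemma:UniversalRealisability} as a black box, whereas the paper spells out the $\Le$-enumeration directly by interleaving all OTM-programs on $\omega\times\On$ many tape portions---so your derivation is in fact the more economical one.
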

\begin{proof}
  Let $\phi$ be the $\Pi_{2}$-statement $\forall{x}\exists{y}\psi(x,y)$, where $\psi$ is a $\Delta_{0}$-formula. A realiser of $\phi$ must consist in an OTM that, given a constructible set $X$, computes some constructible set $Y$ and a realiser $r$ for $\psi(x,y)[X/x,Y/y]$. Therefore, it is enough to show that there is a program that for every $X$ computes a set $Y$ such that $\psi(x,y)[X/x,Y/y]$. Split the tape  into $\omega \times \text{On}$ many disjoint portions. Let $(P_i)_i \in \omega$ be a computable enumeration of Turing machine programs. On the $\langle i,\alpha \rangle$-th portion of the tape, run $P_{i}$ on input $\alpha$. In this way, every OTM-computable set will eventually be computed on one of the portions. As the OTM-computable sets coincide with the constructible sets (cf. Koepke \cite{Koepke}), all constructible sets  will eventually be on a portion of the tape. While producing all the constructible sets, our program uses Theorem \ref{Theo:SatRelationDelta0} to look for a $Y$ such that $\psi(x,y)[X/x,Y/y]$ and stops when it finds one. We note that the realiser is uniform since the code of $Y$ only depends on the $\mathrm{OTM}$ enumeration of $\Le$. 
\end{proof}

\begin{proposition}[{cf. \cite[Proposition 9.4.4]{Carl2019}}]
     The axiom schema of separation is not realisable. 
\end{proposition}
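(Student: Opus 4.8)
The plan is to exhibit a single instance of the separation schema that is not realised, chosen so that a realiser of it would yield an OTM deciding whether an arbitrary OTM-program halts on empty input, which is impossible.

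First I would fix the formula. Let $\chi(u,x)$ be the $\Delta^\omega_0$-formula --- of the kind already used in \Cref{Lemma:OTMNotIcreaseCardinality} --- expressing that $u$ is a halting computation of the OTM-program with code $x$ on the empty input, and set $\varphi(x) := \exists u\,\chi(u,x)$, a $\Sigma^\omega_1$-formula whose only free variable is $x$; informally $\varphi(x)$ says ``program $x$ halts on empty input''. By \Cref{Lemma:Delta0} (equivalently \Cref{Lemma:Sigma1Fin}), for each $n\in\omega$ the sentence $\varphi[n/x]$ is realised if and only if it is true. Now suppose towards a contradiction that the separation schema is realised and fix a realiser $r=\langle c,P\rangle$ of the separation instance for $\varphi$ (whose parameter context is empty).

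Next I would extract the separated set from $r$. Let $b$ be an (OTM-computable) code for $\omega$, and unwind \Cref{Definition:OTM-Realisability}: the computation of $r$ on $b$ produces a code $c_Z$ of some set $Z$ together with a realiser
\[
    t \Vdash \forall w'\bigl((w' \in Z \to (w' \in \omega \wedge \varphi[w'])) \wedge ((w' \in \omega \wedge \varphi[w']) \to w' \in Z)\bigr).
\]
I claim $Z = \{\,n\in\omega : \text{program } n \text{ halts on empty input}\,\}$. For ``$\subseteq$'', if $n\in Z$ then $n\in Z$ is realised (\Cref{Lemma: equality and set-membership are trivial}), so feeding such a realiser to the first conjunct of $t$ applied to (a code for) $n$ shows that $n\in\omega\wedge\varphi[n]$ is realised, hence that $\varphi[n]$ is realised, hence true. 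For ``$\supseteq$'', if $n\in\omega$ and $\varphi[n]$ is true, then $\varphi[n]$ is realised and $n\in\omega$ is realised, and feeding the combination to the second conjunct of $t$ applied to $n$ yields a realiser of $n\in Z$; by the clause for atomic formulas in \Cref{Definition:OTM-Realisability} together with \Cref{Lemma:IsoCodEq}, $n\in Z$ is then true. (The ``realised $\Rightarrow$ true'' halves of the cited lemmas hold without any uniformity hypothesis, so it is irrelevant whether $r$ is uniform.)

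Finally I would derive the contradiction. By \Cref{Lemma:BasicAlgo}, membership in a set is OTM-computable from a code for that set; and $c_Z$ is itself the output of a fixed OTM-program (run $c$ with parameter $P$ on the code $b$ for $\omega$ and read off the appropriate component). Composing these two procedures gives an OTM that, on input $n\in\omega$, decides whether program $n$ halts on empty input --- contradicting the undecidability of the halting problem for OTMs, which holds by the usual diagonal argument via the recursion theorem for OTMs. The step I expect to be the main obstacle is the middle one: verifying that the set $Z$ handed back by the realiser is \emph{semantically} exactly the halting set. This is precisely where the choice of a $\Sigma^\omega_1$-formula is essential --- realisability and truth coincide for $\varphi$ by \Cref{Lemma:Delta0}, while decidability of membership in $Z$ comes from \Cref{Lemma:BasicAlgo}; full separation, rather than $\Delta^\infty_0$-separation, is needed precisely because $\varphi$ is not $\Delta^\infty_0$.
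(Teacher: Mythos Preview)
Your argument has a genuine gap in the final step. The decider you build is not a parameter-free OTM: it runs the realiser $r=\langle c,P\rangle$, so it is an OTM \emph{with parameter $P$}. But the halting set you consider, $H=\{\,n\in\omega : \text{program }n\text{ halts on empty input}\,\}$, is a perfectly definite subset of $\omega$, and it is trivially decidable by an OTM with parameters --- just take $H$ itself as the parameter. The diagonal argument only shows that no OTM with parameter $Q$ can decide the halting problem \emph{relative to} $Q$; it says nothing against an OTM with parameter $P$ deciding the parameter-free halting set. So from the assumed realiser you have produced a decider-with-parameter for a set that is decidable-with-parameters, and no contradiction follows.

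The approach can be salvaged by relativising: take $\varphi(x,p)$ to say ``program $x$ with parameter $p$ halts on empty input'' (still $\Sigma^\omega_1$), so that the separation instance has the free parameter $p$. A realiser $r=\langle c,P\rangle$ then realises $\forall p\,\forall w\,\exists z\,(\dots)$; feeding it a code for $P$ itself and a code for $\omega$ yields, exactly as in your middle step, a code for $Z=\{\,n\in\omega : \text{program }n\text{ with parameter }P\text{ halts}\,\}$. Your decider is now an OTM with parameter $P$ deciding the halting problem relative to $P$, and \emph{that} is impossible by the usual diagonalisation. With this fix your route is genuinely different from the paper's: the paper takes $\varphi(y)=\exists y'\,(\text{``}y'=\wp(y)\text{''})$ and shows that a realiser of that separation instance would compute power sets, contradicting \Cref{Lemma:Pow} (ultimately a cardinality obstruction via \Cref{Lemma:OTMNotIcreaseCardinality}). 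Your corrected argument is a pure diagonalisation and avoids the cardinality machinery, at the cost of needing the relativised halting formula and the self-application of the realiser's own parameter.
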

\begin{proof}
    Let $\chi(y,y')$ be the formula stating that $y'$ is the power set of $y$. Let $\varphi(x,y)= \exists y' \chi$ be the formula that ignores $x$ and expresses the fact that the power set of $y$ exists. Further assume that $r = \langle c, P \rangle$ realises the corresponding instance of separation:
    $$
        \forall y\forall v \exists z \forall u ((u\in z\rightarrow (u\in v \land \varphi[u/x]))\land ((u\in v \land \varphi[u/x])\rightarrow  u\in z)).
    $$
    Let $Y$ be an arbitrary set, let $a$ be a code for $Y$, $b$ be a code for $\emptyset$ and $c$ a code for $\{ \emptyset \}$. Then $r(a)(c) $ is a realiser of:
    $$
          \exists z \forall u ((u\in z\rightarrow (u\in v \land \varphi[u/x]))\land ((u\in v \land \varphi[u/x])\rightarrow  u\in z))[
        Y/y][\{\emptyset\}/v],
    $$
    i.e. $r(a)(c)(0) = \langle d, s \rangle$, where $d$ is a code for a set and $s$ a realiser.
    Since the formula $\exists y' \chi[Y/y]$ is trivially realised by taking the power set of $Y$ as a parameter, it follows that the set $Z$ coded by $d$ is non-empty and, in fact, $Z = \{ \emptyset \}$. 
    Let $t \Vdash \emptyset \in \{ \emptyset \}$.
    It follows that $s(b)(0)(t)(1) \Vdash \varphi[Y/y]$, i.e. $s(b)(0)(t)(1)$ computes a code for the power set of $X$. This is in contradiction to Lemma \ref{Lemma:Pow}.
\end{proof}

%\begin{proof}
%    Let $\chi(y,y')$ be the formula stating that $y'$ is the power set of $y$. Let $\varphi = \exists y' \chi$. Further assume that $r = \langle c, P \rangle$ realises the corresponding instance of separation:
%    $$
%        \forall y \forall v \exists z \forall u ((u \in z \rightarrow (u\in v \land \varphi[u/x]))\land ((u \in v \land \varphi[u/x]) \rightarrow  u \in z).
%    $$
%    Let $Y$ be an arbitrary set, let $a$ be a code for $Y$, $b$ be a code for $\emptyset$ and $c$ a code for $\{ \emptyset \}$. Then $r(a)(c) $ is a realiser of:
%    $$
%        \exists z \forall u ((u \in z \rightarrow (u \in \{ \emptyset \} \land \varphi[u/x])) \land ((u \in \{ \emptyset \} \land \varphi[u/x]) \rightarrow  u \in z)[Y/y][W/v],
%    $$
%    i.e. $r(a)(c)(0) = \langle d, s \rangle$, where $d$ is a code for a set and $s$ a realiser.
%    Since the formula $\exists y' \chi[Y/y]$ is trivially realised by taking the power set of $y'$ as a parameter, it follows that the set $Z$ coded by $d$ is non-empty and, in fact, $Z = \{ \emptyset \}$. 
%    Let $t \Vdash \emptyset \in \{ \emptyset \}$.
%    It follows that $s(b)(0)(t)(1) \Vdash \varphi[u/x])$, i.e. $s(b)(0)(t)(1)$ computes a power set of $Y$. This is in contradiction to Lemma \ref{Lemma:Pow}.
%\end{proof}

Recall that a choice function $f$ on $X$ satisfies $f(Y) \in Y$ for all $Y \in X$. The \emph{axiom of choice} is the principle that there is a choice function on every set $X$ if $X$ does not contain the empty set. The \emph{well-ordering principle} states that every set is in bijection with an ordinal. We call \emph{weak axiom of choice} the sentence
$\forall X(\forall Y \in X (\exists Z (Z \in Y)) \rightarrow \exists f (  f(Y) \in Y))$. 

\begin{theorem}
    The weak axiom of choice is uniformly realised.
\end{theorem}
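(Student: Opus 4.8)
The plan is to read the weak axiom of choice as the $\Pi_2$-sentence $\forall X\,\exists f\,\psi(X,f)$, where $\psi(X,f)$ is the $\Delta^\omega_0$-formula expressing that $f$ is a function and $\forall Y\in X\,\bigl(\exists Z\,(Z\in Y)\to f(Y)\in Y\bigr)$, and then to construct a realiser directly. Since $\psi$ contains only bounded, finitary quantifiers, \Cref{Lemma:Delta0} (applied to the $\Sigma^\infty_1$-formula $\exists f\,\psi$) together with \Cref{Lemma:UniversalRealisability} reduces the whole problem to the following: compute, from a code of a set $X$, a code of \emph{some} set $f$ for which $\psi(X,f)$ is true; given such a code, a realiser of $\psi(X,f)$, and hence of $\exists f\,\psi(X,f)$, can be produced uniformly by the universal realiser program. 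So it is enough to exhibit an OTM-program that turns a code of $X$ into a code of a weak choice function on $X$.

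On a code $a=\langle\rho_a,C\rangle$ of $X$, such a program proceeds as follows. Using \Cref{Lemma:BasicAlgo}\ref{Lemma:BasicAlgo1}, compute $\essdom(a)$, and then run through the ordinals $\beta\in\essdom(a)$ with $\Goedel(\beta,\rho_a)\in C$ — exactly the representatives of the members of $X$. For each such $\beta$ for which some $\gamma$ satisfies $\Goedel(\gamma,\beta)\in C$, i.e.\ for which $d_a(\beta)\neq\emptyset$, let $\gamma_\beta$ be the least such $\gamma$ and form the code $\langle\langle\beta,C\rangle,\langle\gamma_\beta,C\rangle\rangle$ of the ordered pair $\langle d_a(\beta),d_a(\gamma_\beta)\rangle$. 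By pairing and \Cref{Lemma:BasicAlgo}\ref{Lemma:BasicAlgo4}, these pairs are assembled into a code $b$ of the set $f$ that consists of them. Since $d_a$ is injective, no two of these pairs share a first coordinate, so $f$ is a function; its domain contains every nonempty member of $X$, and $f(d_a(\beta))=d_a(\gamma_\beta)\in d_a(\beta)$, so $\psi(X,f)$ holds. The realiser of the axiom then returns, on a code $a$ of $X$, a code of the program that outputs on input $0$ the pair consisting of $b$ and the realiser of $\psi(X,f)$ supplied by \Cref{Lemma:UniversalRealisability}.

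The one genuine difficulty is \emph{uniformity}. The function $f$ extracted above is read off the particular code $a$, and different codes of $X$ will in general yield different weak choice functions, so the program as described need not code a single-valued function. The remedy I would pursue is to interpose, before the extraction, a canonicalisation step that replaces an arbitrary code $a$ of $X$ by a code $\hat a$ of $X$ depending on $X$ alone — intuitively, by re-enumerating $\mathrm{tc}(\{X\})$ along a well-ordering determined solely by the rigid well-founded extensional structure $(\mathrm{tc}(\{X\}),\in)$ rather than by the accidental bookkeeping of $a$ — and then to perform the extraction on $\hat a$, so that $f$ and its code depend only on $X$. Combined with the fact that the matrix-realiser produced by \Cref{Lemma:UniversalRealisability} is itself uniform, this yields a uniform realiser. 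Defining this canonical re-enumeration and verifying that it is genuinely code-independent is where I expect essentially all of the work to lie; the remaining steps are routine bookkeeping with the algorithms of \Cref{Lemma:BasicAlgo}.
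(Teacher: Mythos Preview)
Your approach differs substantially from the paper's, which is extremely brief. The paper simply observes that for each nonempty $Y\in X$ there exists a uniform realiser $r_Y$ of $\exists Z\,(Z\in Y)$ (by \Cref{Lemma:Sigma1Fin}), reads off the witness $Z_Y$ that $r_Y$ supplies, sets $f(Y)=Z_Y$, and asserts that these realisers ``can be combined'' into a realiser of the axiom. There is no code-level extraction of $f$ from a given code of $X$, no appeal to \Cref{Lemma:UniversalRealisability}, and no canonicalisation step. By contrast, you unfold the $\Pi_2$ structure explicitly, extract $f$ directly from the particular code $a$, and then attempt to repair the resulting code-dependence after the fact.

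The genuine gap in your proposal is the canonicalisation step. You suggest re-enumerating $\mathrm{tc}(\{X\})$ along a well-ordering ``determined solely by the rigid well-founded extensional structure $(\mathrm{tc}(\{X\}),\in)$'', but rigidity only guarantees that any two enumerations are uniquely intertranslatable --- it does not single out a canonical one. An OTM with a fixed set parameter that, from an arbitrary code of $X$, produces a code depending only on $X$ would in effect compute a parameter-definable well-ordering of each $\mathrm{tc}(\{X\})$, uniformly in $X$. If that were available, the very same device would uniformly realise the \emph{full} axiom of choice (take the least element of each $Y\in X$ in the canonical order), contradicting the theorem immediately following this one, which shows that the axiom of choice is not uniformly realisable in the presence of $0^{\sharp}$. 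So the step you describe as ``where essentially all of the work lies'' is not merely laborious bookkeeping: as sketched, it cannot go through in general, and the whole point of the contrast the paper draws between the weak and full axioms of choice is that one must \emph{not} route the argument through a code-independent choice of elements.
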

\begin{proof}
    For each non-empty $Y \in X$, let $r_Y$ be a uniform realiser of $\exists Z (Z \in Y)$. Then $r_Y = \langle a_Y, s_Y \rangle$, where $a_Y$ is the code of an element $Z_Y$ of $Y$. These realisers can be combined into a function $f$ with $f(Y) = Z_Y$ to obtain a realiser of the weak axiom of choice.
\end{proof}

Despite the fact that the axiom of choice and the weak axiom of choice are classically equivalent, the first one is considerably stronger from the point of view of realisability.

\begin{theorem}\label{Theo:notAC}
    The axiom of regularity, the axiom of choice, and the well-ordering principle are realisable. Moreover, it is consistent that these principles are not uniformly realisable. %If $0^{\sharp}$ exists, then these principles are not uniformly realisable.
\end{theorem}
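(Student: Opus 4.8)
The plan splits along the two sentences of the statement, and the two parts are essentially independent.

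\emph{Realisability.} The key point is that a code $a$ of a set $X$ carries, built in, a bijection $d_a$ from an ordinal onto $\mathrm{tc}(\{X\})$, hence it supplies a well-ordering of every set hereditarily in $X$ — for free, but depending on the code. I would exploit this directly. To realise the axiom of choice, given a code $a$ of $X$ with $\emptyset\notin X$, take the choice function sending each $Y\in X$ to its $d_a$-least element. To realise the well-ordering principle, restrict $d_a$ to $\{\xi\in\dom(d_a): d_a(\xi)\in X\}$, collapse this set to its order type, and compose with $d_a$ to obtain a bijection of an ordinal with $X$. To realise regularity, use $d_a$ to search for an $\in$-minimal member $Y$ of the (nonempty) set $X$; output $Y$, a realiser of $Y\in X$, and a realiser of $\forall z(z\in Y\to z\notin X)$ — the latter being realisable vacuously by the constant program, since any $s\Vdash Z\in Y$ forces $Z\in Y$ (by the $\in$-clause of \Cref{Definition:OTM-Realisability}) and hence, by minimality of $Y$, $Z\notin X$, so that $Z\in X\to\bot$ has no realiser to act on. In every case the remaining side condition (``$f$ is a choice function on $X$'', ``$g$ is a bijection of an ordinal with $X$'', ``$Y\in X$'') is a true $\Delta^{\infty}_0$- or $\Sigma^{\infty}_1$-statement and so is realised by \Cref{Lemma:Delta0} and \Cref{Lemma:UniversalRealisability}; currying these data together gives the realisers. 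These realisers genuinely read the input code, which is exactly why they are not uniform.

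\emph{Failure of uniform realisability under $0^{\sharp}$.} Suppose $\langle c,P\rangle$ uniformly realises one of the three principles, with $P\subseteq\Ord$. Since it codes a single-valued function, $c$ with parameter $P$ computes a class function $F$ that is $\Sigma_1$-definable from the set $P$. I would first reduce all three cases to a single conclusion: $F$ yields, uniformly in an ordinal $\lambda$ and using only $P$, a well-ordering $W_\lambda$ of $\Ve_\lambda$. For the well-ordering principle this is immediate; for choice one uses that $F$ provides a choice function on every set and that global choice gives a global well-ordering; for regularity one iterates the ``pick an $\in$-minimal element'' operation along $\Ve_\lambda\supseteq\Ve_\lambda\setminus\{e_0\}\supseteq\cdots$, which lists $\Ve_\lambda$ in a rank-respecting fashion in $|\Ve_\lambda|$ steps. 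By uniformity $W_\lambda$ depends only on $\lambda$ and $P$, so $\lambda\mapsto W_\lambda$ is definable from $P$, and every set becomes ordinal-definable from $P$.

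It remains to contradict the existence of $0^{\sharp}$. By \Cref{Lemma:OTMNotIcreaseCardinality}, $F$ is of ``constructible strength'': for any code $a$ of $\Ve_\lambda$ one has $W_\lambda\in\Le_\gamma[a,P]$ with $|\gamma|\le|\Ve_\lambda|^{+}$, and the computation sees nothing beyond $\Le[a,P]$. Taking $\lambda=\kappa$ a Silver indiscernible chosen well above everything relevant to $P$, and the canonical $\Le$-code $a$ of $\Le_\kappa$ (which lies in $\Le$ and is definable from $\kappa$), one obtains that the well-ordering of the constructible sets of rank ${<}\kappa$ induced by $W_\kappa$ is definable over $\Le$ from $P$ with only bounded use of $\kappa$; running the same uniform description at a larger indiscernible $\kappa'$ and comparing the two via the canonical elementary embeddings $j:\Le\to\Le$ furnished by $0^{\sharp}$ forces $F$ to be incompatible with the non-rigidity of $\Le$. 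I expect this final step to be the main obstacle: making the OTM-computability bound of \Cref{Lemma:OTMNotIcreaseCardinality} interact cleanly with the Silver indiscernibles while keeping the side parameter $P$ from spoiling the homogeneity. Everything before it is bookkeeping over \Cref{Lemma:Delta0}, \Cref{Lemma:UniversalRealisability}, \Cref{Lemma: equality and set-membership are trivial} and \Cref{Lemma:OTMNotIcreaseCardinality}.
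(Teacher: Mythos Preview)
Your realisability arguments coincide with the paper's: both exploit that a code $a$ supplies the bijection $d_a$ from an ordinal onto $\mathrm{tc}(\{X\})$ and hence a ready-made well-ordering of everything in sight, and both note that this is inherently code-dependent and thus non-uniform.

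For the failure of uniform realisability under $0^\sharp$, the paper takes a shorter and differently-oriented route than you do. Rather than reducing all three principles to a definable global well-ordering, it reduces regularity and well-ordering to \emph{choice}: a uniform regularity realiser yields a uniform choice realiser (from each member of a given family, select the $\in$-minimal element it returns), and a uniform well-ordering realiser yields a uniform choice realiser by picking least elements. It then invokes \cite[Lemma~8.6.22]{Carl2019} for the statement that choice is not uniformly realisable if $0^\sharp$ exists. So the paper does not carry out the indiscernibility argument in place either --- it outsources it.

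Your direct attack via Silver indiscernibles and \Cref{Lemma:OTMNotIcreaseCardinality} is plausibly in the spirit of what that cited lemma does, but as written it is a plan and not a proof: you yourself flag the final step as ``the main obstacle'' and do not carry it out. The difficulty you identify is genuine. An arbitrary set-of-ordinals parameter $P$ need not lie in $\Le$, so the Silver indiscernibles are not automatically indiscernible for formulas mentioning $P$, and \Cref{Lemma:OTMNotIcreaseCardinality} only places outputs in $\Le_\gamma[s]$, not in $\Le_\gamma$. Closing this gap requires either a real argument absorbing $P$ (for instance, showing that indiscernibles chosen above $\sup P$ remain sufficiently indiscernible for the relevant OTM computation, or reducing to a constructible parameter) or, as the paper does, an appeal to the external lemma. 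Until that is supplied, the second half of your proposal is incomplete.
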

\begin{proof}
    Note that the $\in$-minimal elements of a set are computable. For this reason, the axiom of regularity can be realised by returning a code for the first $\in$-minimal element that the machine finds. Similarly, for the axiom of choice, the machine just searches through the code of the input family and for each set in the family output the first element it finds. Finally, the well-ordering principle is realisable because every code of a set induces a well-ordering on the set which can be computed and returned by an OTM. Note that all these algorithms are not uniform because their output crucially depends on the specific codes of the input sets. 

    To prove the second part, note that a uniform realiser of the axiom of regularity can easily be used to uniformly realise the axiom of choice by picking out an ($\in$-minimal) element of each set in the given collection. Similarly, a uniform realiser of the well-ordering principle can be used to uniformly realise the axiom of choice (see \cite[Section 8.6.1]{Carl2019}). 
    
    Finally, note that a uniform realiser of the axiom of choice would provide a class function $F$ such that $F(X)\in X$ for every set $X$. By Rubin \cite[p.201]{rubin1985equivalents} this statement is equivalent to the axiom of global choice in Von Neumann-Bernays-G\"odel set theory. Therefore the consistency of the non-uniformly realisability of the axiom of choice follows by the fact that there are models of $\mathsf{ZFC}$ where the axiom of global choice fails, see, e.g., \cite{JoelMathOverflow}.
    %
    %For this reason, it is enough to show that the axiom of choice cannot be uniformly realised but this follows by \cite[Lemma 8.6.22]{Carl2019}. 
\end{proof}

Note that by Theorem \ref{V=L uniform is the same as realisable} if $\Ve=\Le$ then the axiom of regularity, the axiom of choice, and the well-ordering principle are all uniformly realisable.

We end this section with a few comments on the realisability of intuitionistic and constructive set theories. Friedman’s and Myhill's $\mathsf{IZF}$ has the axioms of $\mathsf{ZF}$ but replacing foundation with $\in$-induction and replacement with collection; we saw that not all of the axioms of $\mathrm{IZF}$ are OTM-realisable. Aczel’s $\mathsf{CZF}$ has the axioms of extensionality, pairing, union, empty set, infinity, bounded separation, strong collection, the subset collection scheme and the axiom of set induction. Of these, only the subset collection scheme is not OTM-realised. As we have seen, all axioms of Kripke-Platek set theory are OTM-realisable.

\section{Proof-Theoretic Application: The Admissible Rules of IKP}
\label{Section: Proof-Theoretic Application}

In this section, we will apply our realisability techniques to answer a recent proof-theoretic question of Iemhoff and the third author concerning finitary intuitionistic Kripke-Platek set theory $\IKP$ (see \cite[Question 66]{IemhoffPassmann2020}). The theory $\IKP$ was introduced by Lubarsky \cite{Lubarsky2002} to study admissible sets in the context of intuitionistic set theory. In our context, $\IKP$ is obtained by restricting infinitary Kripke-Platek set theory to the standard finitary language of set theory, and then adding the infinity axiom (see \Cref{Proposition:Infinity realised}).

For the rest of this section, we restrict our attention to the finitary language of set theory. If $T$ is a theory, then a map $\sigma$ from the set of propositional variables to the set of sentences in the language of $T$ is called a $T$-substitution. Given a propositional formula $A$, we write $A^\sigma$ for the formula obtained from $A$ by replacing each propositional letter $p$ with the formula $\sigma(p)$.

\begin{definition}
    A \emph{propositional rule} is an ordered pair $(A,B)$ of formulas in propositional logic, usually written $\frac{A}{B}$. A propositional rule $\frac{A}{B}$ is called \emph{admissible for a theory $T$} if for all substitutions $\sigma$ of propositional letters for sentences in the language of $T$, we have that $T \vdash A^\sigma$ implies that $T \vdash B^\sigma$. A propositional rule $\frac{A}{B}$ is called \emph{derivable for a theory $T$} if $T \vdash A^\sigma \rightarrow B^\sigma$ for all substitutions $\sigma$ of propositional letters for sentences in the language of $T$.
\end{definition}

In contrast to the case of classical propositional logic, it is possible for intuitionistic and intermediate logics to have admissible rules that are not derivable. The structure of the admissible rules of intuitionistic propositional logic has been investigated since the 1970s. Rybakov \cite{Rybakov1997} proved that the set of admissible rules of $\IPC$ is decidable, answering a question of Friedman \cite[Question 40]{Friedman1975}. Visser \cite{Visser1999} later showed that the propositional admissible rules of Heyting Arithmetic are exactly the admissible rules of intuitionistic propositional logic, and Iemhoff \cite{Iemhoff2001} provided an explicit description of the set of admissible rules (a so-called \emph{basis}). 

Recently, Iemhoff and the third author \cite{IemhoffPassmann2020} analysed the structure of propositional and first-order logics of intuitionistic Kripke-Platek set theory $\IKP$, and showed, in particular, that the propositional logic of $\IKP$ is $\IPC$.\footnote{This specific result also follows from an earlier result of the third author \cite{Passmann2020}.} It follows that the set of propositional admissible rules of $\IKP$ is included in the set of propositional admissible rules of $\IPC$. This suggests the question of whether the converse inclusion holds as well (\cite[Question 66]{IemhoffPassmann2020}). We will now show that this is indeed the case, and hence, that the propositional admissible rules of $\IKP$ are exactly the admissible rules of $\IPC$.

For technical purposes, we first need to consider a certain conservative extension of $\IKP$ (and show, of course, that it is indeed conservative). To keep all languages set-sized, we will from now on work with realisability over $\Ve_\kappa$ for some inaccessible cardinal $\kappa$. Hence, all notions of realisability are restricted to $\Ve_\kappa$, and all realisers must be elements of $\Ve_\kappa$ as well. Recall that the \textit{elementary diagram} of $\Ve_\kappa$ contains the statements $c_a \in c_b$ whenever $a \in b$, and $c_a = c_b$ whenever $a = b$.

\begin{definition}
    Let $T$ be an extension of $\IKP$ in the language of set theory. The theory $T^*$ is obtained from $T$ by extending the language of $\IKP$ with constant symbols for every element of $\Ve_\kappa$ and adding the elementary diagram of $\Ve_\kappa$ to $\IKP$.
\end{definition}

\begin{lemma}
    \label{Lemma:IKP^* conservative over IKP}
    Let the theory $T$ be an extension of $\IKP$. Then $T^*$ is conservative (in the constant-free language of set theory) over $T$.
\end{lemma}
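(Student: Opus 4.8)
The plan is to give a purely proof-theoretic argument, since the obvious model-theoretic route is blocked: given a Kripke model of $T$ refuting a constant-free sentence $\varphi$, one cannot in general expand it to a model of $T^*$, because the domain of a model of $T$ (e.g.\ a countable one) need not contain an $\in$-preserving image of $(\Ve_\kappa,\in)$, so the constants $c_a$ cannot be interpreted consistently with the diagram. Instead, I would exploit that the diagram consists only of atomic membership facts.

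So suppose $T^* \vdash \varphi$ with $\varphi$ a sentence in the constant-free language. Since derivations are finite, only finitely many diagram axioms are used; collect them into a finite set $\Delta_0 \subseteq \Set{c_a \in c_b}{a \in b}$ (the clauses $c_a = c_b$ for $a=b$ are trivially provable, as we introduce one constant per element, so such a clause is literally $c_a = c_a$), and let $e_1,\dots,e_n$ enumerate the elements of $\Ve_\kappa$ named by constants occurring in $\Delta_0$. By the deduction theorem, which is available intuitionistically since $\Delta_0$ is a finite set of sentences, $T \vdash \bigwedge\Delta_0 \rightarrow \varphi$.

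Next I would remove the constants. As $c_{e_1},\dots,c_{e_n}$ occur neither in $T$ nor in $\varphi$, the standard metatheorem on eliminating auxiliary constants lets us replace each $c_{e_i}$ by a fresh variable $z_i$ throughout the derivation, giving $T \vdash \delta(z_1,\dots,z_n) \rightarrow \varphi$, where $\delta(\vec z)$ is $\bigwedge\Delta_0$ with each $c_{e_i}$ replaced by $z_i$. Applying $\forall$-introduction and then the intuitionistically valid passage from $\forall\vec z\,(\delta(\vec z)\rightarrow\varphi)$ to $(\exists\vec z\,\delta(\vec z))\rightarrow\varphi$ (legitimate as the $z_i$ are not free in $\varphi$), it suffices to prove $T \vdash \exists\vec z\,\delta(\vec z)$. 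Now $\delta(\vec z)$ is a finite conjunction of atomic formulas $z_i \in z_j$, one for each pair with $e_i \in e_j$ in $\Ve$. The relation $\in$ on the finite set $\{e_1,\dots,e_n\}$ is well-founded, hence acyclic, so after reindexing we may assume $e_i \in e_j$ implies $i < j$. Working inside $T$, which contains the empty set, pairing and union axioms of $\IKP$, we build witnesses by finite recursion along this order: set $z_j := \Set{z_i}{i<j \text{ and } (c_{e_i}\in c_{e_j}) \in \Delta_0}$, a finite set whose existence $\IKP$ proves. Every conjunct $z_i \in z_j$ of $\delta$ then holds by construction, so $T \vdash \exists\vec z\,\delta(\vec z)$, and hence $T \vdash \varphi$. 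The reverse implication is immediate from $T \subseteq T^*$.

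The only substantive step is the last one, and it is precisely here that the argument uses that the elementary diagram records only membership atoms: what must be verified is that $\IKP$ proves the existential closure of every finite positive $\in$-configuration occurring in $\Ve_\kappa$, which follows routinely from well-foundedness of $\in$. (If one also wished to add the negated atoms $c_a \notin c_b$ and $c_a \neq c_b$, the same strategy works after padding the $z_j$ with rank-increasing tags so that they stay pairwise distinct and no unintended memberships arise.) Everything else is the familiar observation that adjoining new constants is conservative, carried out carefully enough to remain within intuitionistic logic.
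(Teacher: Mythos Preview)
Your proposal is correct and follows essentially the same route as the paper: extract finitely many diagram atoms by finiteness of proofs, move them across the turnstile via the deduction theorem, replace the constants by fresh variables and existentially quantify, and then observe that the resulting existential sentence describes a finite well-founded $\in$-configuration that $\IKP$ can realise using empty set, pairing, and union. The paper phrases the middle steps in terms of the sequent-calculus implication and existential rules rather than the deduction theorem and the constant-elimination metatheorem, and it is slightly less explicit than you are about the recursive construction of the witnesses, but these are presentational differences only.
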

\begin{proof}
    Suppose that $T^* \vdash \phi$, where $\phi$ is a sentence in the language of $\IKP$. By compactness we can assume that
    $$
        \bigwedge_{j < m} \psi_j \wedge \bigwedge_{i < n} \chi_i \vdash \phi
    $$
    for $\psi_j \in \IKP$ and $\chi_i$ statements of the form $c_a \in c_b$ or $c_a = c_b$. By the implication rule (see \Cref{Proposition: Implication Rule}), we obtain that
    $$
        \bigwedge_{i < n} \chi_i \vdash \bigwedge_{j < m} \psi_j \rightarrow \phi.
    $$
    Now the formula in the consequent of the sequent is in the language without constants, and the left side contains a finite set of constants $c_i$. We are therefore justified in (repeatedly) applying the existential rule (see \Cref{Proposition: Existential Rule}) to get
    $$
        \exists c_{a_0} \exists c_{a_1} \dots \exists c_{a_k} \bigwedge_{i < n} \chi_i \vdash \bigwedge_{j < m} \psi_j \rightarrow \phi.
    $$
    Now the whole sequent is in the usual language of set theory, without constants. Applying the converse implication rule (see \Cref{Proposition: Implication Rule}), we get
    $$
        \bigwedge_{j < m} \psi_j  \wedge \exists c_{a_0} \exists c_{a_1} \dots \exists c_{a_k} \bigwedge_{i < n} \chi_i \vdash \phi.
    $$
    In this situation it suffices to show that
    $$
        X := \exists c_{a_0} \exists c_{a_1} \dots \exists c_{a_k} \bigwedge_{i < n} \chi_i
    $$
    is a consequence of $\IKP$, and thus a consequence of $T$, to conclude that $\IKP \vdash \phi$. 
    
    To this end, first observe that $X$ describes a finite cycle-free directed graph because $X$ encodes the $\in$-relation between finitely many sets in some $\Ve_\kappa$ (which is, of course, well-founded by the foundation axiom). Hence, to see that $X$ is a theorem of $\IKP$ it suffices to see that every such finite graph can be modelled by using the axioms of empty set, pairing and union. 
\end{proof}

Our next step is to adapt the technique of \emph{glued realisability} to our situation (see van Oosten's survey \cite{vanOosten2002} for the arithmetical version). 

\begin{definition}
    Let $T$ be an OTM-realised theory. We then define the \emph{$T$-realisability relation, $\Vdash_T$} by replacing conditions (iv) and (viii) of \Cref{Definition:OTM-Realisability} with the following clauses:
    \begin{enumerate}
        \item[(iv')] $r \Vdash_T \phi \rightarrow \psi$ if and only if $T^{*} \vdash \phi \rightarrow \psi$ and for every $s \Vdash_T \phi$ we have that $r(s) \Vdash_T \psi$, and,
        \item[(viii')] $r \Vdash_T \forall x \phi$ if and only if $T^{*} \vdash \forall x \phi$ and $r(X) \Vdash_T \phi[X/x]$ for every set $X$.
    \end{enumerate}
\end{definition}

Note that we do not need to redefine condition (viii) in full generality for transfinite quantifiers as we are restricting to the finitary language in this section.

\begin{lemma}
    \label{Lemma: derivability truth IKP^*}
    Let $T$ be an OTM-realised theory extending $\IKP$, and $\phi$ be a formula in the language of $T^*$. Then, $T^* \vdash \phi$ if and only if there is a realiser $r \Vdash_T \phi$.
\end{lemma}
\begin{proof}
    The forward direction is essentially proved in the same way as the fact that realisability is sound with respect to intuitionistic logic (see \Cref{Corollary: Soundness of kappa-first-order logic}) and that all axioms of $T$ are realised, paying attention to the fact that the new clauses (iv') and (viii') do not cause any problems. The backward direction is proved with a straightforward induction on the complexity of the formula $\phi$ (in the extended language of $T^*$), using the definition of $T^*$ and \Cref{Lemma: equality and set-membership are trivial} for the atomic cases. 
\end{proof}

\begin{proposition}
    \label{Proposition: Derivability in IKP equivalent IKP-realisability}
    Let $T$ be an OTM-realised theory extending $\IKP$, and $\phi$ be a formula in the language of $T$. Then, $T \vdash \phi$ if and only if there is a realiser $r \Vdash_T \phi$.
\end{proposition}
\begin{proof}
    This is a direct consequence of the previous \Cref{Lemma:IKP^* conservative over IKP,Lemma: derivability truth IKP^*}. 
\end{proof}

This concludes our preparations and we are now ready to apply this to some proof-theoretic properties.

\begin{theorem}[Disjunction Property]
    \label{Theorem: Disjunction Property}
    Intuitionistic Kripke-Platek Set Theory $\IKP$ has the disjunction property, i.e., if $\IKP \vdash \phi \vee \psi$, then $\IKP \vdash \phi$ or $\IKP \vdash \psi$.
\end{theorem}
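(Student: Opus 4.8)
The plan is to read off the disjunction property directly from the glued-realisability characterisation of $\IKP$-provability established in \Cref{Proposition: Derivability in IKP equivalent IKP-realisability}, using crucially that the realisability clause for disjunctions is inherited unchanged by $\Vdash_{\IKP}$ (the definition of $\Vdash_T$ only modifies clauses (iv) and (viii) of \Cref{Definition:OTM-Realisability}).

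First I would record that $\IKP$ is OTM-realised: all axioms of infinitary Kripke--Platek set theory are uniformly realised, the infinity axiom is uniformly realised by \Cref{Proposition:Infinity realised}, and these facts persist when everything is relativised to $\Ve_\kappa$ for the fixed inaccessible $\kappa$ (which is the setting in which $\IKP^*$ and the relation $\Vdash_{\IKP}$ live). Hence \Cref{Proposition: Derivability in IKP equivalent IKP-realisability} is available with $T = \IKP$.

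Now suppose $\IKP \vdash \phi \vee \psi$. By part (i) of \Cref{Proposition: Derivability in IKP equivalent IKP-realisability}, there is a realiser $r \Vdash_{\IKP} \phi \vee \psi$. Viewing $\phi \vee \psi$ as the set-sized disjunction $\bigvee_{\alpha < 2} \phi_\alpha$ with $\phi_0 = \phi$ and $\phi_1 = \psi$, clause (v) of \Cref{Definition:OTM-Realisability} --- which is untouched in the passage from $\Vdash$ to $\Vdash_{\IKP}$ --- gives $r(0) = \langle \gamma, s \rangle$ with $\gamma < 2$ and $s \Vdash_{\IKP} \phi_\gamma$. Thus either $s \Vdash_{\IKP} \phi$ or $s \Vdash_{\IKP} \psi$. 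Applying part (ii) of \Cref{Proposition: Derivability in IKP equivalent IKP-realisability} to this realiser yields $\IKP \vdash \phi$ or $\IKP \vdash \psi$, which is what we wanted.

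There is no real obstacle: the argument is a one-line consequence of soundness and faithfulness of glued $\IKP$-realisability together with the semantic disjunction property of $\Vdash$. The only point deserving explicit attention is the verification that $\IKP$ remains OTM-realised after relativising to $\Ve_\kappa$, so that \Cref{Lemma: derivability truth IKP^*} (and hence \Cref{Proposition: Derivability in IKP equivalent IKP-realisability}) genuinely applies in the relativised setting; the rest is immediate from the definitions.
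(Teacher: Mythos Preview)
Your proposal is correct and follows essentially the same route as the paper: apply \Cref{Proposition: Derivability in IKP equivalent IKP-realisability}(i) to pass from $\IKP \vdash \phi \vee \psi$ to a realiser $r \Vdash_{\IKP} \phi \vee \psi$, use the unchanged disjunction clause to extract a realiser of one disjunct, and then apply part (ii) to return to provability. Your added remark that one must check $\IKP$ remains OTM-realised in the relativised setting is a reasonable point of care, though the paper treats this as implicit.
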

\begin{proof}
    This is now a straightforward consequence of our preparations: if $\IKP \vdash \phi \vee \psi$, then $\Vdash_\IKP \phi \vee \psi$ by \Cref{Proposition: Derivability in IKP equivalent IKP-realisability}. By definition, it follows that $\Vdash_\IKP \phi$ or $\Vdash_\IKP \psi$. Applying \Cref{Proposition: Derivability in IKP equivalent IKP-realisability} again yields $\IKP \vdash \phi$ or $\IKP \vdash \psi$.
\end{proof}

The restricted Visser's rules $\set{V_n}_{n < \omega}$ are defined as follows:
\begin{mathpar}
    \inferrule{
            \left( \bigwedge_{i = 1}^n (p_i \rightarrow q_i) \right) \rightarrow (p_{n+1} \vee p_{n+2})
        }
        {
            \bigvee_{j = 1}^{n+2}\left( \bigwedge_{i = 1}^n (p_i \rightarrow q_i) \rightarrow p_j \right) 
        }
\end{mathpar}
Denote by $V_n^a$ the antecedent and by $V_n^c$ the consequent of the rule. We will make use of the following theorem, which is a direct corollary of Iemhoff's results \cite{Iemhoff2005}.

\begin{theorem}[Iemhoff, {\cite[Theorem 3.9, Corollary 3.10]{Iemhoff2005}}]
    \label{Theorem: Iemhoff}
    If the restricted Visser's rules are propositional admissible for a theory $T$ with the disjunction property, then the propositional admissible rules of $T$ are exactly the propositional admissible rules of intuitionistic propositional logic.
\end{theorem}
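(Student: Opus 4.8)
The plan is to obtain this from two facts about the admissible rules of intuitionistic propositional logic $\IPC$, establishing the inclusions $\mathrm{Adm}(T) \subseteq \mathrm{Adm}(\IPC)$ and $\mathrm{Adm}(\IPC) \subseteq \mathrm{Adm}(T)$ separately; up to the passage from intermediate logics to first-order theories this is a theorem of Iemhoff, so for the paper itself I would simply cite \cite[Theorem~3.9, Corollary~3.10]{Iemhoff2005}, but the structure of the argument is as follows.

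For $\mathrm{Adm}(\IPC) \subseteq \mathrm{Adm}(T)$ I would invoke Iemhoff's theorem \cite{Iemhoff2001} that Visser's rules form a \emph{basis} for $\mathrm{Adm}(\IPC)$: every rule admissible for $\IPC$ is derivable over $\IPC$ from instances of the $V_n$. Since $T$ is built on (at least) intuitionistic logic and, by hypothesis, admits each restricted rule $V_n$, an induction along such a derivation shows that $T$ admits every rule admissible for $\IPC$. The disjunction property of $T$ enters precisely here: it is what lets one pass from the $(n{+}2)$-fold disjunction in the consequent $V_n^c$ to the single-conclusion rules that a derivation actually manipulates.

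For $\mathrm{Adm}(T) \subseteq \mathrm{Adm}(\IPC)$ I would argue contrapositively. If $\frac{A}{B}$ is not admissible for $\IPC$, fix a propositional substitution $\sigma$ with $\IPC \vdash A^\sigma$ but $\IPC \nvdash B^\sigma$, together with a finite rooted Kripke countermodel for $B^\sigma$. The task is then to transfer this failure into $T$, that is, to produce a $T$-substitution $\tau$ with $T \vdash A^\tau$ but $T \nvdash B^\tau$. This is done by composing $\sigma$ with a uniform ``de~Jongh substitution'' that interprets propositional Kripke semantics inside $T$ --- the same device that witnesses that the propositional logic of $T$ is $\IPC$ --- which, for extensions of $\IKP$, is available from the analysis of Iemhoff and the third author \cite{IemhoffPassmann2020} and which again relies on the disjunction property.

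I expect this second inclusion to be the main obstacle: building the faithful propositional interpretation and checking that it preserves provability of $A^\sigma$ while reflecting the unprovability of $B^\sigma$. Once \cite[Theorem~3.9, Corollary~3.10]{Iemhoff2005} is quoted, the only work left in this section is to verify its hypotheses for the theory at hand --- the disjunction property, which we obtain for $\IKP$ as \Cref{Theorem: Disjunction Property}, and the propositional admissibility of the restricted Visser rules $V_n$ --- both of which I would establish through the realisability characterisation of provability from \Cref{Proposition: Derivability in IKP equivalent IKP-realisability}.
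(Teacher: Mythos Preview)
The paper gives no proof of this theorem at all: it is stated as a result of Iemhoff with a bare citation to \cite[Theorem 3.9, Corollary 3.10]{Iemhoff2005} and then used as a black box in the subsequent theorem. You correctly anticipate this (\emph{``for the paper itself I would simply cite \cite{Iemhoff2005}''}), so your proposal matches the paper's approach.

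One remark on your additional sketch: for the inclusion $\mathrm{Adm}(T) \subseteq \mathrm{Adm}(\IPC)$ you invoke de~Jongh substitutions from \cite{IemhoffPassmann2020}, which are specific to $\IKP$ and its extensions. But the theorem as stated is for an arbitrary theory $T$ satisfying the hypotheses, so this step conflates the general result with the particular application. In the paper's organisation this inclusion for $\IKP$ is in fact already obtained \emph{before} \Cref{Theorem: Iemhoff} is stated (from the fact that the propositional logic of $\IKP$ is $\IPC$, cited from \cite{IemhoffPassmann2020}); the cited Iemhoff result is then only needed for the converse inclusion, which is exactly your first direction via the basis theorem. Your outline of that direction is accurate.
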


\begin{theorem}
    The propositional admissible rules of $\IKP$ are exactly the admissible rules of $\IPC$.
\end{theorem}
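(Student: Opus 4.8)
The plan is to combine Iemhoff's criterion with the glued-realisability machinery developed in this section. By \Cref{Theorem: Iemhoff}, together with the disjunction property of $\IKP$ proved in \Cref{Theorem: Disjunction Property}, it suffices to show that every restricted Visser rule $V_n$ is propositionally admissible for $\IKP$. So I would fix $n$ and an $\IKP$-substitution $\sigma$ with $\IKP \vdash (V_n^a)^\sigma$, abbreviate $A_i := \sigma(p_i)$ and $B_i := \sigma(q_i)$, so that the hypothesis reads $\IKP \vdash \bigwedge_{i=1}^n(A_i \to B_i) \to (A_{n+1} \vee A_{n+2})$, and I must produce a proof in $\IKP$ of $(V_n^c)^\sigma$, which is $\bigvee_{j=1}^{n+2}\bigl(\bigwedge_{i=1}^n(A_i \to B_i) \to A_j\bigr)$. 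Since this is a disjunction and $\IKP$ has the disjunction property, it is enough to find a single index $j \le n+2$ with $\IKP \vdash \bigwedge_{i=1}^n(A_i \to B_i) \to A_j$.

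I would establish the following strengthening by induction on $n$: for every OTM-realised theory $T$ extending $\IKP$ and all sentences $A_1,\dots,A_{n+2},B_1,\dots,B_n$ in the language of $T$, if $T \vdash \bigwedge_{i=1}^n(A_i \to B_i) \to (A_{n+1}\vee A_{n+2})$ then $T \vdash \bigwedge_{i=1}^n(A_i \to B_i) \to A_j$ for some $j$. The generalisation to arbitrary OTM-realised $T \supseteq \IKP$ is what makes the induction go through, and it is harmless: \Cref{Lemma:IKP^* conservative over IKP}, \Cref{Lemma: derivability truth IKP^*} and \Cref{Proposition: Derivability in IKP equivalent IKP-realisability} are stated for (or immediately adapt to) such $T$, and running the proof of \Cref{Theorem: Disjunction Property} with $\IKP$ replaced by $T$ shows that every OTM-realised extension of $\IKP$ itself has the disjunction property. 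The base case $n=0$ is exactly this disjunction property. For the inductive step I would split on whether the implication $A_n \to B_n$ is OTM-realised. If it is, then $T' := T + (A_n \to B_n)$ is again an OTM-realised extension of $\IKP$ and satisfies $T' \vdash \bigwedge_{i=1}^{n-1}(A_i \to B_i) \to (A_{n+1}\vee A_{n+2})$; the inductive hypothesis applied to $T'$ (with the two disjunct-atoms of $V_{n-1}$ instantiated to $A_{n+1},A_{n+2}$) yields $T' \vdash \bigwedge_{i=1}^{n-1}(A_i\to B_i) \to A_j$ for some $j \in \{1,\dots,n-1,n+1,n+2\}$, and currying the axiom $A_n \to B_n$ back out turns this into $T \vdash \bigwedge_{i=1}^{n}(A_i\to B_i) \to A_j$.

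The remaining case is where $A_n \to B_n$ is not OTM-realised. Here one first observes that a non-realised implication forces its antecedent to be realised — otherwise the implication would be vacuously realised — and its consequent not — otherwise a constant realiser would witness it — so $A_n$ is realised while $B_n$ is not. From this I would argue that $T + \bigwedge_{i=1}^n(A_i \to B_i)$ cannot have the disjunction property unless one of the first $n$ disjuncts $\bigwedge_{i=1}^n(A_i\to B_i) \to A_i$ is already provable in $T$, using the realisedness of $A_n$ to supply the needed witness, and then feed this back through \Cref{Proposition: Derivability in IKP equivalent IKP-realisability}. Applying the claim with $T = \IKP$ completes the proof.

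I expect the main obstacle to be precisely this last case. When $A_n \to B_n$ is not realised one cannot absorb it into the base theory without destroying OTM-realisedness, so the clean ``curry and recurse'' move of the previous case is unavailable; squeezing out one of the first $n$ disjuncts, or isolating the right reduction to a smaller instance of the claim, is the technical heart of the argument and is where the proof would need the most care. A secondary but routine point is to verify that the whole apparatus of this section — conservativity of $T^*$ over $T$, the equivalence of $T$-provability with $\Vdash_T$-realisability, and hence the disjunction property — transfers verbatim from $\IKP$ to the auxiliary OTM-realised extensions of $\IKP$ that appear in the induction.
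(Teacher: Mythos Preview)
Your overall strategy matches the paper's: reduce via \Cref{Theorem: Iemhoff} and \Cref{Theorem: Disjunction Property} to the admissibility of the restricted Visser rules. The divergence, and the gap, is in how you handle Visser's rules.

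The paper does \emph{not} argue by induction on $n$, and it does \emph{not} case-split on plain OTM-realisability $\Vdash$ of a single conjunct. Instead, writing $\delta = \bigwedge_{i=1}^n(\phi_i \to \psi_i)$ for the full antecedent, it works with the glued relation $\Vdash_{\IKP+\delta}$ and splits on whether $\delta$ itself is $(\IKP+\delta)$-realised. If it is, one feeds the realiser into a realiser of $(V_n^a)^\sigma$ to obtain $\Vdash_{\IKP+\delta} \phi_{n+1}\vee\phi_{n+2}$ and then applies \Cref{Proposition: Derivability in IKP equivalent IKP-realisability}(ii). If not, some conjunct $\phi_i\to\psi_i$ fails to be $(\IKP+\delta)$-realised; but $\IKP+\delta \vdash \phi_i\to\psi_i$ trivially, so by clause (iv$'$) the failure must come from the computational part, which forces the existence of a realiser $s \Vdash_{\IKP+\delta} \phi_i$. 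Then \Cref{Proposition: Derivability in IKP equivalent IKP-realisability}(ii) gives $\IKP+\delta \vdash \phi_i$ directly.

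Your second case is exactly where this matters, and your own suspicion is well-founded: from plain $\Vdash A_n$ you cannot conclude $T+\delta \vdash A_n$. Realisability in the sense of $\Vdash$ does not imply provability in any theory (the Rose formula is a counterexample already for $\IKP$), so ``feeding this back through \Cref{Proposition: Derivability in IKP equivalent IKP-realisability}'' does not work --- that proposition links $\Vdash_T$ with $T$-provability, not $\Vdash$ with $T$-provability. The vague appeal to the disjunction property of $T+\delta$ also fails, since in this case $T+\delta$ is precisely \emph{not} OTM-realised, so the argument of \Cref{Theorem: Disjunction Property} is unavailable for it. The missing idea is to do the case analysis in $\Vdash_{\IKP+\delta}$ rather than in $\Vdash$; once you do that, the provability clause built into (iv$'$) supplies exactly the leverage you were looking for, and the induction on $n$ becomes unnecessary.
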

\begin{proof}
    By \Cref{Theorem: Iemhoff} and the fact that $\IKP$ has the disjunction property (see \Cref{Theorem: Disjunction Property}) it suffices to show that the restricted Visser's rules are propositional admissible. To this end, let $\sigma$ be a substitution. We will write $\phi_i$ for $\sigma(p_i)$, and $\psi_j$ for $\sigma(q_j)$. Now, assume that $\IKP \vdash (V_n^a)^\sigma$, i.e., spelling this out,
    $$
        \IKP \vdash \bigwedge_{i = 1}^n (\phi_i \rightarrow \psi_i) \rightarrow (\phi_{n+1} \vee \phi_{n+2}).
    $$
    Denote the antecedent of $(V_n^a)^\sigma$ by $\delta$. We will now consider the theory $\IKP + \delta$. There are two cases.
    
    In the first case, suppose that there is a realiser $r \Vdash_{\IKP + \delta} \delta$. By assumption, $\IKP \vdash (V_n^a)^\sigma$, and hence, with \Cref{Proposition: Derivability in IKP equivalent IKP-realisability}.(i), it follows that there is a realiser $s \Vdash_{\IKP} (V_n^a)^\sigma$, and thus $s \Vdash_{\IKP + \delta} (V_n^a)^\sigma$. Hence, $s(r) \Vdash_{\IKP + \delta} \phi_{n+1} \vee \phi_{n+2}$. It follows that $\phi_{n+k}$ is $\IKP + D^\sigma$-realised for some $k < 2$. By \Cref{Proposition: Derivability in IKP equivalent IKP-realisability}.(ii), we have $\IKP + \delta \vdash \phi_{n+k}$, and with the deduction theorem and some propositional reasoning, we conclude that $\IKP \vdash (V_n^c)^\sigma$.
    
    In the second case, suppose that $\delta$ is not $\IKP + \delta$-realisable. By the definition of $\IKP + \delta$-realisability, this means that there is some $i$, $1 \leq i \leq n$, such that $\phi_i \rightarrow \psi_i$ is not $\IKP + \delta$-realised. This means that $\IKP + \delta \not \vdash \phi_i \rightarrow \psi_i$, or that for every potential realiser $r$, there is a realiser $s \Vdash \phi_i$ such that $r(s) \not \Vdash \psi_i$. As $\phi_i \rightarrow \psi_i$ is a consequence of $\delta$, it follows, in particular, that there is a realiser $r \Vdash_{\IKP + \delta} \phi_i$. By \Cref{Proposition: Derivability in IKP equivalent IKP-realisability}.(ii), we have that $\IKP + \delta \vdash \phi_i^\sigma$. An application of the deduction theorem yields $\IKP \vdash \delta \rightarrow \phi_i$. In this situation, it is immediate that $\IKP \vdash (V_n^c)^\sigma$.
    
    In conclusion, we have shown that $\IKP \vdash (V_n^a)^\sigma$ implies that $\IKP \vdash (V_n^c)^\sigma$ for every substitution $\sigma$. This shows that Visser's rule $V_n$ is admissible for $\IKP$ for every $n < \omega$.
\end{proof}

The difficulty in generalising this technique to the infinitary case seems to lie in generalising \Cref{Lemma:IKP^* conservative over IKP}.

\begin{question}
    Is it possible to generalise the techniques presented in this chapter to obtain proof-theoretic results for infinitary Kripke-Platek set theory?
\end{question}

% \bibliographystyle{splncs}
% \bibliography{REFERENCES}
\printbibliography

\end{document}